\newcommand{\NN}{\mathbb{N}} 
\newcommand{\ZZ}{\mathbb{Z}} 
\newcommand{\RR}{\mathbb{R}} 
\newcommand{\mv}{{\mathsf{m}}}
\newcommand{\nv}{{\mathsf{n}}}
\newcommand{\myC}{\mathcal{CC}}
\newcommand{\perco}[7]{\text{\normalshape X}_{#1}([#2,#3,#4\!\mid\! #5,#6,#7])}
\definecolor{lgray}{rgb}{0.9,0.9,0.9}
\newcommand{\nsp}[1]{\cellcolor{lgray}{#1}}
\definecolor{dgray}{rgb}{0,0.9,0.9}
\newcommand{\nspi}[1]{\cellcolor{dgray}{#1}}
\definecolor{llgray}{rgb}{0.9,0,0.9}
\newcommand{\nspii}[1]{\cellcolor{llgray}{#1}}
\newcommand{\id}{\operatorname{id}}
\newcommand{\CG}{\mathcal{CG}}
\newtheorem{lemma}{Lemma}[section]
\newtheorem{propo}[lemma]{Proposition}
\newtheorem{theor}[lemma]{Theorem}
\newtheorem{corol}[lemma]{Corollary}
\theoremstyle{definition}
\newtheorem{defin}[lemma]{Definition}
\newtheorem{remar}[lemma]{Remark}
\newtheorem{quest}[lemma]{Question}
\newenvironment{proofof}[1]{\begin{proof}[Proof of #1]}{\end{proof}}
\author{S\o{}ren Eilers, Rune Johansen, Rasmus Veber Rasmussen\\and Carsten Thomassen}
\title{Chromatic numbers for contact graphs of congruent cuboids}
\date{\today}
\begin{document}

\maketitle
\begin{abstract} We initiate the study of chromatic numbers for contact graphs of configurations of integer-sized cuboids in three dimensions, all of which are mutually congruent. Disallowing rotations, we show a global upper bound of 8 for the chromatic numbers, which implies that there is a global upper bound of 48 when the cuboids may be rotated freely. Specializing further to cuboids that are required to have a side length of one we obtain more precise upper bounds.

Such upper bounds are compared to examples of configurations having relatively large chromatic numbers, leading to a complete determination of some of these chromatic numbers, but in general, the gaps between our upper and lower bounds are rather wide. In particular, we know of no such configuration of any size leading to a chromatic number above 6.
\end{abstract}
\section{Fundamentals}

\subsection{Introduction}

The study of chromatic numbers of contact graphs for axis-parallel cuboids goes back at least to the Study Group with Industry in 2005 (retroactively published in \cite{Allwright_2021}), and it has been known since \cite{brda:po} (see also \cite{MR2809403}) that there is no bound for the chromatic number of such contact graphs. In a related endeavor, it was shown in \cite{bessygoncalvessereni} that there are layered configurations needing eight colors, and since one may appeal to the four color theorem in alternating layers to show that eight colors suffice, this determines the optimal chromatic number in that case as well. 

We propose here the study of chromatic numbers for contact graphs of cuboid configurations where all constituent cuboids are mutually congruent. As a first example, consider 
12 cuboids of size $8\times 2\times 1$ placed like 
\begin{center}
\includegraphics[width=6cm]{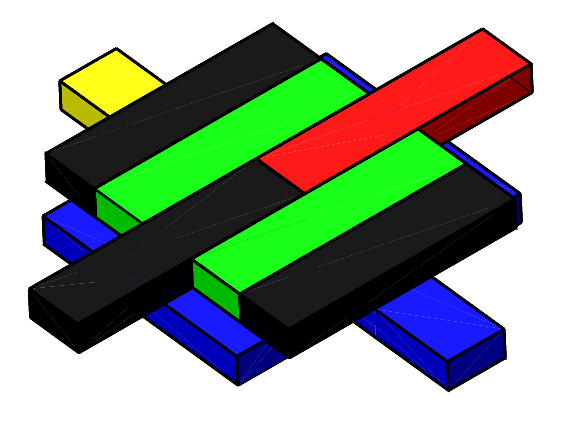}
\end{center}
or, in exploded view,
\begin{center}
\includegraphics[width=6cm]{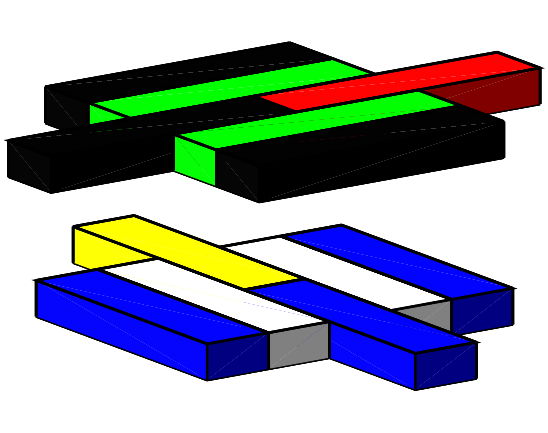}
\end{center}
It is easy to check that the given 6-coloring is proper, and if one starts by noting that the four cuboids in the center of the configuration need to be colored with four colors, it is straightforward to go over the remaining number of choices to see that the configuration cannot be colored with just 5 colors. Note that the configuration is layered but that it is constructed from translates of two differently oriented cuboids. 

We will present examples with the same chromatic number of 6 where the cuboids are all translates of each other, and/or where the cuboids are smaller, but it must be pointed out  that we know of no example with congruent cuboids of any size provably requiring 7 colors. 
The examples shown in \cite{brda:po} or \cite{MR2809403} to render the chromatic number unbounded in the general case, and in \cite{bessygoncalvessereni} to establish tightness of the upper bound in the layered case, requires repeated rescaling of the cuboids and thus we see no way to employ such methods to provide lower bounds. Indeed, except for the example already given, and the one presented in Remark \ref{byhand}, all of the lower bounds presented here were obtained by locating examples in pseudorandom computer searches. Some of these examples are so large that it is far outside the scope of human computation to check that the stipulated chromatic numbers are correct. As further explained in Section \ref{pseudo} we are using standard and well-tested  packages, and in some cases we have double checked our findings using alternative (and slower) software, always obtaining coherent answers. For the benefit of the reader who may wish to independently confirm our results, and/or visualize the found configurations in a way allowing rotation, we list coordinates with a coloring requiring the least possible number of colors in Appendix A. 

We obtain upper bounds by a variety of methods, the most effective of which is to locate periodic colorings of $\RR^3$ that may then be inherited to colorings on cuboid configurations. This does not lead to the exact computation of the chromatic numbers of cuboids beside $1\times 1\times 1$ in general position, but when we consider cuboid configurations that are layered and/or where the cuboids are never rotated, we do know some chromatic numbers exactly.

The key case where the cuboids are of the form $a\times b\times 1$ and may only be rotated in the $XY$ plane may be interpreted as a question on LEGO bricks, which partially motivated this work as an outgrowth of the first listed author's endeavors in this direction (\cite{legocprob,durhuuseilers}).
The study of these types of problems was featured in the textbook \cite{xmbook} by the two authors first listed, and for several years students in the ``Experimental Mathematics'' course at the University of Copenhagen would attack a selection of such  problems with computer-based experimentation. Several  ideas for the results presented in this paper appeared in embryonic form in the students' course work. It is difficult to track the development of these ideas and attribute them to individuals, but we can very precisely do this for milestones of upper and lower bounds, and will do so in remarks below. It is  also a pleasure to record our gratitude to Micha\l\ Adamaszek and Eigil Rischel for inspiring conversations in the earlier phases of this work.

\subsection{Definitions}

We are interested in contact graphs for cuboids, and   study the case when all cuboids in a configuration are mutually {congruent}, of integer dimensions, and with corners in $\ZZ^3$. More precisely, we consider the sets 
\begin{eqnarray*}
\myC_1([a,b,c])&=&\{[x,x+a]\times [y,y+b]\times [z,z+c]\mid x,y,z\in \ZZ\}\\
\myC_2([a,b,c])&=&\myC_1([a,b,c])\cup \myC_1([b,a,c])\\
\myC_3([a,b,c])&=&\myC_2([a,b,c])\cup \myC_2([a,c,b])\cup \myC_2([b,c,a])\\
\end{eqnarray*}
where the subscript indicates the freedom we allow ourselves in rotating the cuboids. The tuple $(x,y,z)$ is called the \emph{root} of the cuboid in all cases.

With $\bullet\in\{1,2,3\}$ and for any set  $\{C_1,\cdots,C_m\}\subseteq \myC_\bullet([a,b,c])$ with $C_i^\circ\cap C_j^\circ=\emptyset$ for $i\not=j$, we define the \emph{contact graph} with vertices $C_i$ and edges connecting $C_i$ and $C_j$, $i\not=j$, when they \emph{touch}, i.e., when  their intersection is a non-degenerate rectangle, or, which is the same, when
\[
C_i\cap C_j\not\subseteq (\ZZ\times \ZZ\times \RR) \cup  (\ZZ\times \RR\times \ZZ) \cup  (\RR\times \ZZ\times \ZZ).
\]

 The set $\CG_\bullet([a,b,c])$ is the set of graphs thus obtained. Our main object of study is then

\begin{defin} For $\bullet\in\{1,2,3\}$, 
\[
\chi_\bullet([a,b,c])=
\max\{\chi(G)\mid G \in \CG_\bullet([a,b,c])\}
\]
\end{defin}

In other words, $\chi_\bullet([a,b,c])$ is the smallest number of colors needed to ensure that any finite configuration of specified cuboids (no two of which have an interior point in common) can be colored with no touching cuboids having the same color.

\section{Methodology}
In this section, we summarize various methods to bound the numbers $\chi_\bullet([a,b,c])$, and  introduce further notation.

\subsection{Rescaling}\label{rescaling}

All the previously mentioned results  \cite{brda:po},  \cite{MR2809403}, \cite{bessygoncalvessereni}, proving existence of cuboid configurations of varying dimensions with high chromatic numbers are based on the careful rescaling of various ``gadgets'' to arrange systematic contacts that can then be used to show that many colors are needed. This method is not available in our setting, since the cuboids' dimensions are given. But we can show a fundamental result describing the action of rescaling all  cuboids in a given configuration.

To do so, and for later use, we write out what it means that two cuboids $[x,x+a]\times [y,y+b]\times [z,z+c]$ and $[x',x'+a']\times [y',y'+b']\times [z',z'+c']$ touch without colliding, and consequently induce an edge in a graph under study. One of the following statements must hold for this; either 
\begin{gather}
(x+a=x'\vee x'+a'=x)\wedge\notag\\
  (y+b\geq y'\wedge y'+b'\geq y) \wedge\label{touchx}\\
(z+c\geq z'\wedge z'+c'\geq z)\notag
\end{gather}
or
\begin{gather}
  (x+a\geq x'\wedge x'+a'\geq x) \wedge\notag\\
(y+b=y'\vee y'+b'=y)\wedge\label{touchy}\\
(z+c\geq z'\wedge z'+c'\geq z)\notag
\end{gather}
or
\begin{gather}
  (x+a\geq x'\wedge x'+a'\geq x) \wedge\notag\\
  (y+b\geq y'\wedge y'+b'\geq y) \wedge\label{touchz}\\
(z+c=z'\vee z'+c'=z)\notag
\end{gather}
depending upon in which direction the blocks touch.

Then we prove

\begin{lemma}\label{increases}
When $[a,b,c]\leq [a',b',c']$ then $\CG_1([a,b,c])\subseteq \CG_1([a',b',c'])$.
\end{lemma}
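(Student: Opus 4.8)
The plan is to prove the inclusion directly by showing that every contact graph realizable with $a\times b\times c$ cuboids is also realizable with the larger $a'\times b'\times c'$ cuboids (here $[a,b,c]\le[a',b',c']$ is read componentwise, $a\le a'$, $b\le b'$, $c\le c'$), transforming a given configuration one coordinate at a time. Fix a finite $\{C_1,\dots,C_m\}\subseteq\myC_1([a,b,c])$ with disjoint interiors and let $(x_i,y_i,z_i)$ be the root of $C_i$. The first observation is that, because all cuboids share the same dimensions, whether two of them collide or touch --- and, in the latter case, across which face --- is governed entirely by the three \emph{per-coordinate} relations obtained by comparing $|x_i-x_j|$ with $a$, $|y_i-y_j|$ with $b$, and $|z_i-z_j|$ with $c$. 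Reading off \ourref{touchx}, \ourref{touchy} and \ourref{touchz}, a non-degenerate rectangular contact in a given direction amounts to the roots being \emph{adjacent} (distance equal to the side length) in that direction and \emph{strictly overlapping} (distance strictly less than the side length) in the other two, while a collision amounts to strict overlap in all three. Hence it suffices to relabel the roots separately in each axis so that, for the relevant side length, the relations ``$<$'', ``$=$'' and ``$>$'' are each preserved when the side length is enlarged.

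This reduces the statement to a one-dimensional problem: given integers $x_1,\dots,x_m$ and $a\le a'$, produce integers $x_1',\dots,x_m'$ with $\operatorname{sgn}(x_i'-x_j')=\operatorname{sgn}(x_i-x_j)$ and with $|x_i-x_j|$ lying below, at, or above $a$ exactly when $|x_i'-x_j'|$ lies below, at, or above $a'$. I would solve this with the explicit, order-preserving map
\[
\phi(v)=v+(a'-a)\lfloor v/a\rfloor,\qquad x_i'=\phi(x_i),
\]
and apply the analogous maps (with $b$ and $c$) in the $y$- and $z$-coordinates. The verification rests on a single elementary fact: a half-open integer interval of length $s$ contains exactly one multiple of $a$ when $s=a$, at most one when $s<a$, and at least one when $s>a$. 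Writing $\Delta=v_l-v_k>0$ and $\delta=\lfloor v_l/a\rfloor-\lfloor v_k/a\rfloor$, this gives $\delta=1$ when $\Delta=a$, $\delta\in\{0,1\}$ when $\Delta<a$, and $\delta\ge 1$ when $\Delta>a$; substituting into $\phi(v_l)-\phi(v_k)=\Delta+(a'-a)\delta$ then yields a value equal to $a'$, strictly between $0$ and $a'$, or strictly above $a'$, respectively, which is exactly what is required.

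Having built the new roots, I would assemble the cuboids $C_i'=[x_i',x_i'+a']\times[y_i',y_i'+b']\times[z_i',z_i'+c']$. These lie in $\myC_1([a',b',c'])$ since the roots are integers, and they have pairwise disjoint interiors because a collision needs strict overlap in all three coordinates, a condition the maps preserve from the collision-free original. By the same token the assignment $C_i\mapsto C_i'$ is an isomorphism between the two contact graphs, so the graph realized by $\{C_i\}$ is realized by $\{C_i'\}$; since the configuration was arbitrary, this gives $\CG_1([a,b,c])\subseteq\CG_1([a',b',c'])$.

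The step I expect to require the most care is pinning down the correct transformation $\phi$: the adjacency relation ``$=a$'' must be sent to ``$=a'$'' \emph{on the nose}, and the delicate point is to do this while neither stretching a strict overlap up to or beyond $a'$ nor leaving a separation at most $a'$. A naive uniform rescaling by $a'/a$ breaks integrality and the exactness of adjacencies, and a left-to-right greedy assignment can run out of room; the floor-based correction above is what threads this needle, since enlarging a coordinate by its own side length bumps the floor term by exactly one and hence adjusts each ``critical'' distance by exactly $a'-a$.
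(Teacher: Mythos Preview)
Your proof is correct and is essentially the paper's own argument: your map $\phi(v)=v+(a'-a)\lfloor v/a\rfloor$ is exactly the paper's rescaling $(xa+r)\mapsto(xa'+r)$ (with $0\le r<a$) written in closed form, applied coordinatewise. Your verification is in fact a bit more careful than the paper's, since you explicitly check all three cases of the trichotomy $<,=,>$ (and hence that non-edges stay non-edges and interiors stay disjoint), whereas the paper only spells out the forward direction for touching.
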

\begin{proof}
We rescale by
\[
(xa+r,yb+s,zc+t)\mapsto (xa'+r,yb'+s,zc'+t)
\]
where $0\leq r<a,0\leq s<b, 0\leq t<c$. This will preserve the contact graphs by reference to one of the statements \eqref{touchx}--\eqref{touchz}. For instance, if two blocks rooted at $(xa+r,yb+s,zc+t)$ and $(x'a+r',y'b+s',z'c+t')$, respectively, touch as in \eqref{touchx} relative to the $a\times b\times c$ dimensions, we may assume by symmetry that
\[
xa+r+a=x'a+r'
\]
which implies $x'=x+1$ and $r'=r$, and consequently
\[
xa'+r+a'=x'a'+r'.
\]
We also have
  \[
  yb+s+b\geq y'b+s'\wedge y'b+s'+b\geq yb+s 
  \] 
which implies $-1\leq y-y'\leq 1$ and is equivalent to
  \[
  (y-y'+1)b\geq s'-s\wedge (y'-y+1)b\geq s-s'. 
  \] 
  Since the coefficients on the $b$ are non-negative, we also get
  \[
  (y-y'+1)b'\geq s'-s\wedge (y'-y+1)b'\geq s-s' 
  \] 
  because $b'\geq b$. The same argument works for $z$ and $t$, and as above in the appropriate order when \eqref{touchy} or \eqref{touchz} are in effect.
  \end{proof}

\subsection{Partitions}\label{partitions}

As already noted, we have $\chi_2([a,b,1])\leq 8$ for all $a,b$ by partitioning
\[
\CG_2([a,b,1])=\CG_2^{(0)}([a,b,1])\sqcup \CG_2^{(1)}([a,b,1])\
\]
with $\CG_2^{(i)}([a,b,1])$ the elements rooted in $(x,y,z)$ with $z$ even or odd, respectively. The four color theorem shows that every configuration rooted at a fixed $z$ can be four-colored, and since the layers in $\CG_2^{(i)}([a,b,1])$ do not touch, the same is true throughout. 

%
Similar reasoning shows:

\begin{lemma}\label{partitioning}
For all $a,b,c$ we have
\begin{enumerate}[(a)]
\item $\chi_3([a,b,c])\leq 6\chi_1([a,b,c])$ 
\item $\chi_3([a,b,c])\leq 3\chi_1([a,b,c])$  when $|\{a,b,c\}|=2$
\item $\chi_3([a,b,c])\leq 3\chi_2([a,b,c])$ 
\item $\chi_2([a,b,c])\leq 2\chi_1([a,b,c])$. 
\end{enumerate}
\end{lemma}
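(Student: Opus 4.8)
The plan is to prove all four bounds by a single \emph{disjoint-palette} principle: if the cuboids of a configuration $G\in\CG_\bullet([a,b,c])$ are partitioned into classes $V_1,\dots,V_k$ so that each induced contact graph $G[V_i]$ admits a proper coloring with $m_i$ colors from a palette $P_i$, and the palettes $P_1,\dots,P_k$ are pairwise disjoint, then $G$ is properly colored with $\sum_i m_i$ colors. For this to be useful I would first check the routine but essential point that when $V_i$ is an \emph{orientation class}, i.e.\ a set of cuboids all of which are translates of a single box $[a',b',c']$ (a permutation of $[a,b,c]$), the induced graph $G[V_i]$ is itself a genuine element of $\CG_1([a',b',c'])$: interiors remain disjoint, and the touching relation of \eqref{touchx}--\eqref{touchz} is simply inherited. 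Hence $G[V_i]$ can be colored with $\chi_1([a',b',c'])$ colors, and the only remaining ingredient is that $\chi_1$ is invariant under permuting its arguments. This invariance is immediate: any permutation of the coordinate axes is an isometry of $\RR^3$ preserving $\ZZ^3$ and the touching relation, and it carries a configuration of translates of $[a,b,c]$ bijectively onto a configuration of translates of the permuted box, preserving the contact graph; so $\chi_1([a',b',c'])=\chi_1([a,b,c])$ for every permutation.

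With this in hand the first three bounds follow by choosing the partition. For (d), I split the cuboids of $G\in\CG_2([a,b,c])$ according to whether they are translates of $[a,b,c]$ or of $[b,a,c]$; each class is an element of $\CG_1$, colorable with $\chi_1([a,b,c])$ colors, and two disjoint palettes give $2\chi_1([a,b,c])$. For (a), I split $G\in\CG_3([a,b,c])$ into its at most six orientation classes, one for each permutation of $[a,b,c]$; six disjoint palettes of size $\chi_1([a,b,c])$ give $6\chi_1([a,b,c])$. For (b), the hypothesis $|\{a,b,c\}|=2$ means the box has a repeated side length, so the multiset $\{a,b,c\}$ has only $3!/2!=3$ distinct orderings; there are therefore only three orientation classes, and three disjoint palettes give $3\chi_1([a,b,c])$.

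For (c) I would instead use the coarser partition coming from the very definition $\myC_3([a,b,c])=\myC_2([a,b,c])\cup\myC_2([a,c,b])\cup\myC_2([b,c,a])$: group the six orientations into the three $XY$-rotation pairs, according to which side length is assigned to the $Z$-axis. Each of the three classes is then a genuine element of $\CG_2$ of the corresponding parameter ordering (again interiors and touchings are inherited), colorable with $\chi_2([a,b,c])$, $\chi_2([a,c,b])$, and $\chi_2([b,c,a])$ colors respectively, and three disjoint palettes finish the argument — \emph{provided} these three numbers are all bounded by $\chi_2([a,b,c])$.

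That last proviso is where I expect the main obstacle to lie, and it is the part most worth pinning down carefully. Unlike $\chi_1$, the quantity $\chi_2([a,b,c])$ singles out the side length $c$ as the fixed ``height'' while $\{a,b\}$ rotate freely in the $XY$-plane, so the coordinate-permutation isometries only yield symmetry of $\chi_2$ in its first two arguments (swapping $X$ and $Y$, or moving the frozen axis among $X,Y,Z$ without changing the frozen value). Obtaining the stated bound $3\chi_2([a,b,c])$ therefore requires the stronger invariance $\chi_2([a,b,c])=\chi_2([a,c,b])=\chi_2([b,c,a])$, i.e.\ independence of the chromatic number from which side is frozen; I would either establish this permutation-invariance directly or, failing that, record the safe bound $\chi_2([a,b,c])+\chi_2([a,c,b])+\chi_2([b,c,a])$ and invoke the invariance as a separate claim. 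Verifying exactly which permutations preserve $\chi_2$ is the one genuinely non-formal step; everything else is the bookkeeping of disjoint palettes.
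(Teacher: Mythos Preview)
Your approach is exactly what the paper intends by ``similar reasoning shows'': partition by orientation class and color each class from its own palette. For (a), (b), and (d) your argument is complete and matches the paper.

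Your caution about (c) is not merely fastidious---it is justified by the paper itself. Later (Section~5) the authors observe explicitly that $\chi_2([2,2,1])=5<6\leq\chi_2([2,1,2])$, so $\chi_2$ is \emph{not} invariant under permuting which side is the height. The partition $\myC_3([a,b,c])=\myC_2([a,b,c])\cup\myC_2([a,c,b])\cup\myC_2([b,c,a])$ therefore yields only
\[
\chi_3([a,b,c])\ \leq\ \chi_2([a,b,c])+\chi_2([a,c,b])+\chi_2([b,c,a]),
\]
which is $\leq 3\max_\pi\chi_2(\pi(a,b,c))$ but not obviously $\leq 3\chi_2([a,b,c])$ for every ordering. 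The paper's one-line proof does not address this, and the literal ``for all $a,b,c$'' reading of (c) does not follow from the partition argument alone. In every place the paper actually \emph{applies} part~(c) it only needs the weaker sum (or max) form you describe, so nothing downstream breaks; but you are right to record the safe bound and flag the invariance as a separate claim rather than assume it.
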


%

\subsection{Maximal minimal valency}\label{maxmin}

We define $\mv_\bullet([a,b,c])$ as the maximal minimal valency of a graph in $\CG_\bullet([a,b,c])$, also known as  the \emph{degeneracy} of the class  $\CG_\bullet([a,b,c])$.
The standard argument shows that $\chi_\bullet([a,b,c])\leq  \mv_\bullet([a,b,c])+1$. In most cases, we have better bounds avaliable, but for some, this remains our most efficient approach.

 Since all configurations studied here are finite, we can in turn bound $\mv_\bullet([a,b,c])$ by 
noting that every configuration in $\CG_\bullet([a,b,c])$ must have at least one cuboid which has no neighbor below it. Arguing on rectangles in the plane representing all cuboids intersecting the lowest open thickened plane $\RR\times \RR\times (z,z+1)$ we further get that at least one such rectangle has two free bottom and right  faces, so that the cuboid has 
no neighbor touching the lower band of height 1 on its front and right sides, as illustrated below.
\begin{center}
\includegraphics[width=3.5cm]{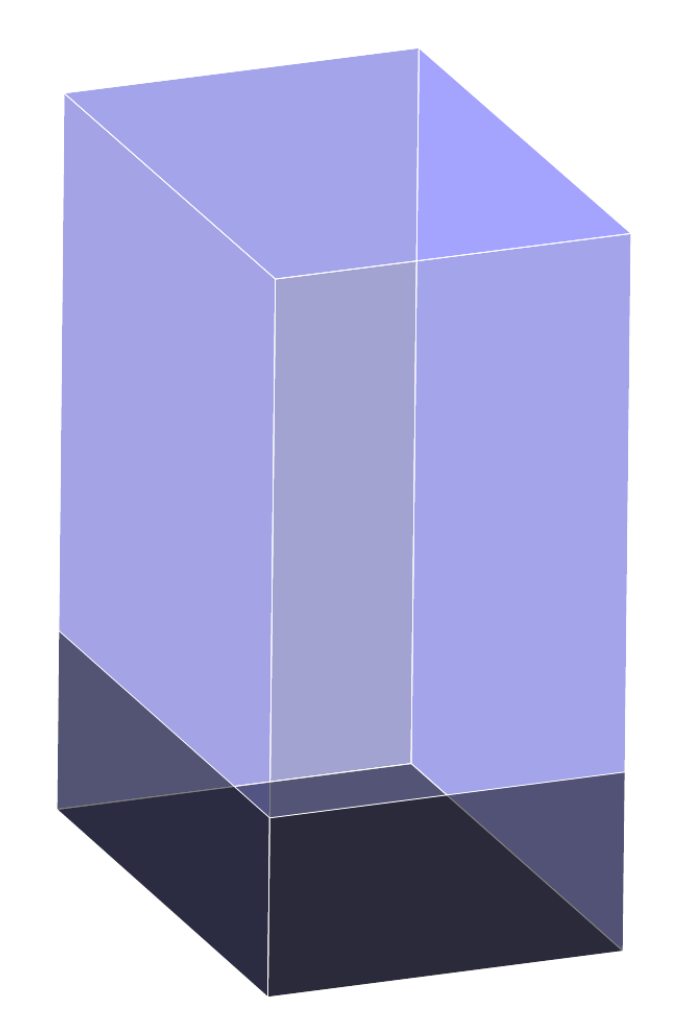}
\end{center}
 Letting $\nv_\bullet([a,b,c])$ denote the largest number of such neighbors   a cuboid in $\myC_\bullet([a,b,c])$ can have, we conclude
 \[
\mv_\bullet([a,b,c])\leq \nv_\bullet([a,b,c])
 \]
 and note that $\nv_\bullet([a,b,c])$ can be computed as an independence number. To do so, we fix a cuboid  $c_0=[0,a']\times [0,b']\times [0,c']$ with $(a',b',c')$ one of the permutations of $(a,b,c)$ allowed in the definition of $\CG_\bullet([a,b,c])$ and add a vertex for each element $c\in \myC_\bullet([a,b,c])$ which neighbors 
 $c_0$ and such that the root $(x,y,z)$ of $c$ satisfies
 \begin{equation}\label{notouch}
 x>0\vee (x=0\wedge (y>0\wedge z>0))
 \end{equation}
Two vertices are then connected by an edge if the cuboids collide, and the independence number is then exactly the maximal number of non-colliding neighbors. The number of neighbors varies with the position of the center cuboid because of the non-symmetric formulation of \eqref{notouch}. For instance, as illustrated in
 \begin{center}
\includegraphics[width=5cm]{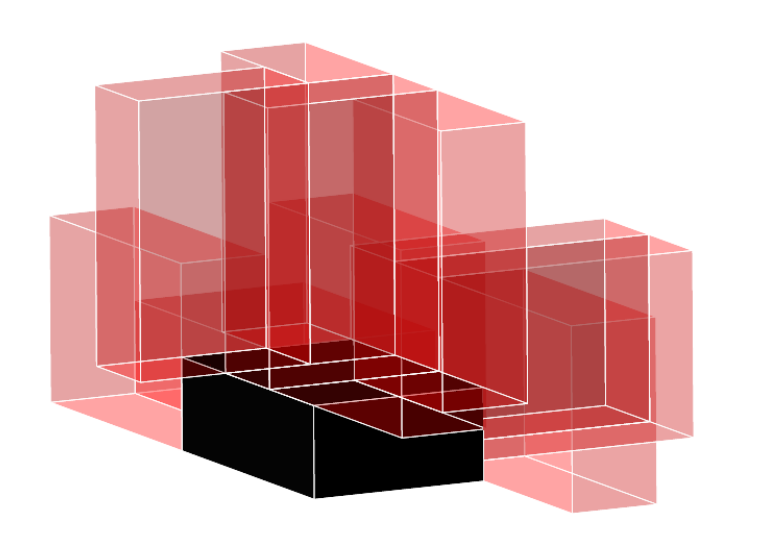}\includegraphics[width=5cm]{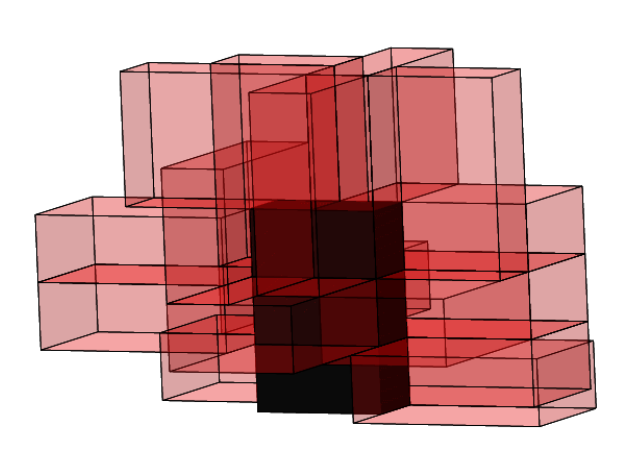}
\end{center}
the maximal number of neighbors of $[0,1]\times[0,2]\times[0,3]$ to the left is 10, whereas it is 16 for $[0,3]\times[0,2]\times[0,1]$. It suffices to compute the graphs and independence numbers for three of the six positions using mirror symmetry, and then one uses the largest such number as $\nv$.


\subsection{Pseudorandom searches}\label{pseudo}

As detailed in the introduction, almost all lower bounds we present in this paper are computer-generated. 
But configurations having high chromatic numbers are rare in the sense, e.g., that if one fixes a cube $M\times M\times M$ and fills it up randomly (allowing for holes appropriately), then the probability of finding such a configuration is small. Since one does not know, a priori, whether a configuration with a given high chromatic number can exist in such a given box, more efficient methods for finding such examples had to be developed.

This was a theme in the Experimental Mathematics course at University of Copenhagen in 2012--14,  where many approaches were tried and some found to be succesful in the very palpable sense of locating examples improving the previous lower bound for the numbers $\chi_2([a,b,1])$. From many approaches tested, the  following algorithm appeared  to give the most interesting results:

\begin{algorithm}
	\caption{}
	\begin{algorithmic}[1]
	\State Place $n_{00}$ non-touching cuboids in the box $M\times M\times M$
	\State Color all cuboids with color 1
	\State $\chi\leftarrow 1, n\leftarrow n_{00}$
		\While {$\chi<\chi_0$ and $n<n_0$}
			\State Find all positions having the maximal number of neighboring colors
			\State Add one such cuboid at random to the configuration, $n\leftarrow n+1$
			\State Recolor the configuration and update $\chi\leftarrow \chi+1$ if necessary
		\EndWhile
		\If {$\chi=\chi_0$}
		\State \Return A critical subconfiguration
		\Else
		\State \Return $\emptyset$
		\EndIf
	\end{algorithmic} 
\end{algorithm}
Is it not unreasonable to think of the algorithm as executing a game where one side (in line 6) tries to bring the chromatic number upwards, and the other side (in line 7) tries to keep it down. 
Note for line 10 that it is straightforward to obtain a configuration which is minimal in size (i.e. critical) by removing all cuboids in turn and computing the chromatic number of the remaining configuration. If it remains the same, the cuboid can be left out.
But one notes also that the recomputation of chromatic numbers is computationally costly, and in practice we were not able to find critical examples with much more than 25 cuboids in it this way.

For our more recent and more systematic attempt to search for configurations establishing further lower bounds, we first tried to apply the same algorithm but employing technological advances. We used our computers (standard laptops) such as they were, but migrated from computing chromatic numbers in a computer algebra system to using a satisfiability solver, viz. the Python software package called PySat \cite{pysat}. As is well known, determining whether a graph $ G $ can be colored with $ n$ colors can be formulated as a Boolean satisfiability problem as follows: for each node $ v \in G $, we define $ n $ Boolean variables $ v_1, \ldots, v_n$, where $v_i$ is true if node $v$ is assigned color $i$. The satisfiability formula consists of clauses  $(v_1 \vee \ldots \vee v_n)$ for each $v \in G$, ensuring that each node receives a color. Additionally, for each edge $(v, w) \in G$ , we include clauses $\neg (v_i \wedge w_i)$ for $i = 1, \ldots, n $, ensuring that adjacent nodes are not assigned the same color.
This process can be repeated for increasing values of $n$ until the above formula is satisfiable, thereby providing a proper coloring for the graph.
We also used the Python package NetworkX \cite{networkx} to store contact graphs of cuboid configurations and to extract information about various graph attributes, such as the valency of specific vertices. 

This led to significant advances in computation times, but not to many new examples. We consequently tried other approaches, and found the following to be superior:

\begin{algorithm}
	\caption{}
	\begin{algorithmic}[1]
	\State Place $n_{00}$ non-touching cuboids in the box $M\times M\times M$
	\State $n\leftarrow n_{00}$
		\While { $n<n_0$}
			\State Find all positions having the maximal number of neighbors
			\State Add one such cuboid at random to the configuration, $n\leftarrow n+1$
		\EndWhile
		\State Color the configuration and compute $\chi$
		\If {$\chi=\chi_0$}
		\State \Return A critical subconfiguration
		\Else
		\State \Return $\emptyset$
		\EndIf
	\end{algorithmic} 
\end{algorithm}

Obviously this approach is much faster because we have eliminated the need to continually compute chromatic numbers, and this in turn allows the search for much larger configurations.
We  expect the  original approach of placing cuboids based on already found colors to reach high chromatic numbers  more efficiently  than this approach of just aiming for high valencies, but whatever disadvantage is very obviously made up for in increased options for experimentation in moderate time. This version has led us to  all critical examples presented with 30 or more cuboids.

\subsection{Periodic colorings}\label{periodic}

An \emph{inherited} $n$-coloring of elements in $\myC_\bullet([a,b,c])$ in its most general form is a map $\kappa:\ZZ^3\times S_\bullet\to\{1,\dots,n\}$ ($S_\bullet$ being the symmetric group of relevant size) which assigns the color $\kappa(x,y,z,\pi)$ to a cuboid rooted at $(x,y,z)$ and with the dimensions given by the given permutation of the tuple $(a,b,c)$. We call the coloring \emph{periodic} if for some $(x_0,y_0,z_0)\in\NN^3$
\[
\kappa(x+x_0,y+y_0,z+z_0,\pi)=\kappa(x,y,z,\pi).
\]
When $\bullet=1$, there is no choice of direction, and we tacitly leave out the entry $\pi$. When $\bullet=2$, we write $S_2=\{\id,\tau\}$.

We use the notation $\perco{\bullet}{a}{b}{c}{x}{y}{z}$ to indicate the smallest number of colors that can be used to color any element of $\CG_\bullet([a,b,c])$ using a periodic coloring that is periodic with period $x\times y\times z$, or $\infty$ if no such coloring exists. 

This number can in principle be computed by defining a graph with vertices that are all elements in $\myC_\bullet([a,b,c])$ with a root in $[0,x)\times [0,y)\times [0,z)$ and edges whenever two such cuboids, or any of their periodic translates, do not overlap but do touch. If the graph thus defined has loops, $\perco{\bullet}{a}{b}{c}{x}{y}{z}=\infty$, if not, $\perco{\bullet}{a}{b}{c}{x}{y}{z}
$ is its chromatic number. But such graphs appear to form challenges for our computational approaches, and computing their chromatic numbers is often outside of reach, even for graphs as in Section \ref{pseudo} of the same or larger size. 

For all the examples we have found, we are able to give geometric explanations and proofs of the upper bounds found this way, and hence we do not need to rely on computer-based computations in these cases to verify the claims.

\subsection{Clique numbers}

We end this section by noting that the clique numbers for graphs in $\CG_\bullet([a,b,c])$ can never exceed 4. It is only possible to realize $\chi_\bullet([a,b,c])$ by cliques in very few cases, and indeed this bound on clique numbers should  be interpreted as an explanation of the fact that we must often resort to rather large configurations to realize our best lower bounds of these values.

Since the argument is the same in all dimensions, we argue for generalized axis-parallel hypercuboids in $\RR^d$, with the convention that two hypercuboids \textbf{touch} if their interiors are disjoint, but their intersection is a hypercuboid of dimension $d-1$. This is consistent with our previous definition at $d=3$, where cuboids must intersect in non-degenerate rectangles. We prove:

\begin{theor}\label{bdclique}
Any contact graph describing touching hypercuboids of the form
\[
[x_1,x_1+a_1]\times \cdots \times [x_d,x_d+a_d]
\]
in $\RR^d$ has clique number at most $d+1$.
\end{theor}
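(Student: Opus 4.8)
The plan is to reduce the geometric statement to a purely combinatorial fact about decompositions of complete graphs, using only the one-dimensional Helly property as the geometric input. Write each hypercuboid in a clique as $C_j=\prod_{i=1}^d I_j^{(i)}$ with $I_j^{(i)}=[x_i^{(j)},x_i^{(j)}+a_i^{(j)}]$, and suppose $C_1,\dots,C_N$ pairwise touch. First I would observe that touching boxes meet in every coordinate: if $C_j$ and $C_k$ touch, their intersection is a nondegenerate $(d-1)$-box, so $I_j^{(i)}\cap I_k^{(i)}\neq\emptyset$ for each $i$. Fixing $i$, the intervals $I_1^{(i)},\dots,I_N^{(i)}$ are then pairwise intersecting, so the inequality $\max_j x_i^{(j)}\le \min_k (x_i^{(k)}+a_i^{(k)})$ holds and yields a point $z_i^\ast$ lying in all of them. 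Collecting coordinates produces a single point $z^\ast=(z_1^\ast,\dots,z_d^\ast)$ contained in every box of the clique.

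Next I would use $z^\ast$ to assign to each box a sign vector $s_j\in\{-,0,+\}^d$, with $s_j(i)=-$ if $z_i^\ast$ is the right endpoint of $I_j^{(i)}$, $s_j(i)=+$ if it is the left endpoint, and $s_j(i)=0$ if $z_i^\ast$ is interior to $I_j^{(i)}$. The point is to extract a clean combinatorial criterion for touching: since $C_j\cap C_k=\prod_i(I_j^{(i)}\cap I_k^{(i)})$ has every factor nonempty, the intersection is $(d-1)$-dimensional exactly when precisely one factor degenerates to a point, and the $i$-th factor degenerates to $\{z_i^\ast\}$ exactly when $\{s_j(i),s_k(i)\}=\{-,+\}$. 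Hence $C_j$ and $C_k$ touch if and only if $s_j$ and $s_k$ are opposite in exactly one coordinate. Since every pair in the clique touches, coloring each edge $\{j,k\}$ of $K_N$ by its unique opposite coordinate partitions the edge set, and color class $i$ is exactly the complete bipartite graph between $\{j:s_j(i)=+\}$ and $\{j:s_j(i)=-\}$.

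I would then finish with the Graham--Pollak theorem: any partition of the edges of $K_N$ into complete bipartite subgraphs uses at least $N-1$ of them. We have exhibited such a partition into at most $d$ parts, one per coordinate after discarding empty classes, so $N-1\le d$ and $N\le d+1$. I expect the main obstacle to be the bookkeeping in the touching criterion, namely verifying that touching forces \emph{exactly} one degenerate coordinate rather than merely at least one (two opposite coordinates would cut the intersection down to dimension $d-2$, so such boxes miss each other instead of touching), and confirming that every opposite pair really lands in the asserted bipartite class. All the geometry is front-loaded into producing the common point $z^\ast$; once the edges are organized into coordinate-indexed complete bipartite graphs, the bound follows immediately from Graham--Pollak.
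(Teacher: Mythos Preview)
Your proof is correct and follows essentially the same route as the paper: both locate a common point of all boxes in the clique via a Helly-type argument, then color the edges of $K_N$ by the unique coordinate in which the two boxes abut, observe that each color class is complete bipartite (with parts determined by which side of the common point each box lies on in that coordinate), and invoke the Graham--Pollak/Tverberg bound $N-1\le d$. Your sign-vector bookkeeping makes explicit exactly what the paper's halfspace description encodes, but the argument is the same.
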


\begin{lemma}\label{nonempty} 
If $C_1,\dots,C_n$ are hypercuboids in $\RR^d$ of the form
\[
[x_1,x_1+a_1]\times \cdots \times [x_d,x_d+a_d]
\]
that all pairwise intersect, then 
\[
\bigcap_{i=1}^n C_i\not=\emptyset
\]
\end{lemma}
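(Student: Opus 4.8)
The statement to prove is Lemma~\ref{nonempty}: axis-parallel hyperrectangles in $\RR^d$ that pairwise intersect have a common point. This is the $d$-dimensional version of Helly's theorem specialized to boxes, where the product structure makes the Helly number drop to $2$.

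\medskip

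The plan is to exploit the fact that each hypercuboid is a Cartesian product of intervals, so that intersection decouples coordinate by coordinate. Write $C_i=\prod_{k=1}^d I_k^{(i)}$ where $I_k^{(i)}=[x_k^{(i)},x_k^{(i)}+a_k^{(i)}]$ is the projection of $C_i$ onto the $k$-th axis. The key observation is that for boxes, $C_i\cap C_j\neq\emptyset$ if and only if $I_k^{(i)}\cap I_k^{(j)}\neq\emptyset$ for every coordinate $k$ simultaneously; the intersection is nonempty exactly when it is nonempty in each coordinate separately. First I would record this equivalence, which reduces the whole problem to the one-dimensional case applied independently in each of the $d$ coordinates.

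\medskip

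Next I would prove the one-dimensional statement: if intervals $I_k^{(1)},\dots,I_k^{(n)}$ on the real line pairwise intersect, then $\bigcap_{i=1}^n I_k^{(i)}\neq\emptyset$. For a fixed coordinate $k$, let $L_k=\max_i x_k^{(i)}$ be the largest left endpoint and $R_k=\min_i(x_k^{(i)}+a_k^{(i)})$ the smallest right endpoint. Pairwise intersection of two intervals on the line says precisely that the left endpoint of each lies to the left of the right endpoint of the other; applying this to the interval achieving the maximum left endpoint and the one achieving the minimum right endpoint gives $L_k\le R_k$, so the common point $L_k$ (equivalently any point of $[L_k,R_k]$) lies in all the intervals of coordinate $k$. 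I would then assemble the point $p=(L_1,\dots,L_d)$ and check that $p\in C_i$ for every $i$, since $p_k=L_k\in I_k^{(i)}$ for each $k$ by the one-dimensional conclusion, whence $p\in\prod_k I_k^{(i)}=C_i$.

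\medskip

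The only subtle point, and the one I would be most careful about, is the reduction step: I must be sure that pairwise intersection of the \emph{boxes} forces pairwise intersection of the \emph{projected intervals} in each coordinate. This direction is immediate because projection is order-preserving and $C_i\cap C_j\subseteq I_k^{(i)}\cap I_k^{(j)}$ for every $k$. The potential pitfall is the converse flavor of reasoning one might be tempted to use: coordinatewise pairwise intersection does not by itself let one glue witnesses from different pairs, which is exactly why the one-dimensional Helly argument (picking the extremal endpoints $L_k,R_k$ rather than any ad hoc witness) is essential. With the projection step handled honestly and the extremal-endpoint argument in each coordinate, the conclusion follows with no induction on $n$ needed. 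This lemma then feeds directly into Theorem~\ref{bdclique}: a clique of size $d+2$ would yield $d+2$ pairwise-touching boxes with a common point, and one argues that such a point cannot be an interior point of any of them, forcing a contradiction via a counting or dimension argument on which facet each box contributes.
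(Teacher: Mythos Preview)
Your proof is correct and is essentially the same as the paper's: both reduce to the one-dimensional interval Helly property applied coordinate by coordinate, the paper phrasing this via a decomposition into axis-parallel half-spaces and you via projections, but the extremal-endpoint inequality $\max_i x_k^{(i)}\le \min_i (x_k^{(i)}+a_k^{(i)})$ is the common core. Your closing remark on Theorem~\ref{bdclique} is in the right spirit but vaguer than the paper's actual argument, which edge-colors $K_n$ by the touching direction and invokes Tverberg's theorem on decompositions of $K_n$ into complete bipartite graphs.
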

\begin{proof}
We note that each $C_i$ is the intersection of $2^d$ subsets of the form
\[
\RR\times\RR\times [x_k,\infty)\times \RR\times \cdots \times\RR
\]
or 
\[
\RR\times\RR\times (-\infty,y_k]\times \RR\times \cdots \times\RR
\]
Replacing each hypercuboid by these $2^d$ half-spaces, we obtain $n2^d$ such sets that all pairwise overlap. This implies that any $x_k$ thus appearing must be dominated by any $y_k$, and consequently at least one element of $\RR^d$ is contained in all half-spaces.
\end{proof}

\begin{proofof}{Theorem \ref{bdclique}}
Let  $C_1,\dots,C_n$ be a family of hypercuboids that all pairwise touch; we need to show that $n\leq d+1$. By Lemma \ref{nonempty} there is a $\mathbf v\in \RR^d$ which is contained in all $C_i$. Since no interior point of $C_i$ is contained in any $C_j$ with $j \neq i$, it follows that $\mathbf v$ is on the boundary of each $C_i$.

%

We may assume after translation that $\mathbf v=\mathbf 0$ and proceed to color the {edges} of the complete graph $K_n$ as follows. When $i\not= j$, we know that $C_i\cap C_j$ contains a $(d-1)$-dimensional hypercuboid which is perpendicular to a unique standard basis vector $\mathbf e_k$, and color the edge $(i,j)\in K_n$ by the color $k$. 
We note that every edge of $K_n$ gets a color and also that the edges of color $k$ form a bipartite graph where one of the bipartite classes consists of all the hypercuboids $C_i$ in the positive halfspace of the hyperplane perpendicular to
$\mathbf e_k$, and the other bipartite class consists of all hypercuboids $C_i$ in the negative halfspace. Note that this bipartite graph is indeed complete bipartite, that is, all edges between the two bipartite classes are present.

Since this holds true for all $k\in\{1,\dots, d\}$, we have produced a decomposition of $K_n$ into $d$ complete bipartite graphs. By \cite{tverberg}, this implies $d\geq n-1$ as desired.
\end{proofof}

There are  graphs with clique number 4 in all $\CG_\bullet([a,b,c])$ studied here except $\CG_\bullet([1,1,1])$ and $\CG_1([a,1,1])$.

\section{$\chi_1$}

We first present  examples establishing lower bounds for $\chi_1$. These examples were all found by selecting minimal examples realizing the given chromatic number  amongst a large body of experiments generated pseudo-randomly, cf. Section \ref{pseudo}. Since we have proved in Lemma \ref{increases} that $\chi_1([a,b,c])$ is increasing in the dimensions, the results given below imply lower bounds as presented at all larger cuboid sizes.

\begin{propo}\label{lowerbounds}
\begin{gather*}
\chi_1([3,1,1])\geq 4\\
\chi_1([2,2,1])\geq 5\\
\chi_1([5,2,1])\geq 6\\
\chi_1([4,3,1])\geq 6\\
\chi_1([2,2,2])\geq 6.
\end{gather*}
\end{propo}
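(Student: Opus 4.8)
The plan is to establish each of the five inequalities by exhibiting an explicit finite configuration of congruent cuboids whose contact graph has chromatic number at least the stated bound. For a lower bound $\chi_1([a,b,c])\geq k$, it suffices to produce one configuration $\{C_1,\dots,C_m\}\subseteq\myC_1([a,b,c])$ (so all cuboids are translates, $\bullet=1$) with pairwise disjoint interiors whose contact graph $G$ satisfies $\chi(G)\geq k$, i.e. $G$ cannot be properly $(k-1)$-colored. Since the excerpt tells us these examples were located by pseudorandom search (Section \ref{pseudo}), the substance of the proof is not the discovery but the verification, so I would present coordinates for the roots of each configuration (as promised for Appendix A) and then argue the colorability claim.

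For the verification I would split into the two halves of each claim. The easy half is that the configuration is \emph{realizable}: one checks that the chosen roots give cuboids with pairwise disjoint interiors, which is a routine coordinate check using the collision condition implicit in \eqref{touchx}--\eqref{touchz}, and identifies which pairs touch (thereby pinning down the edge set of $G$). The hard half is the \emph{lower bound on $\chi$}, i.e. showing no proper $(k-1)$-coloring exists. For the two small cases I would argue by hand: for $\chi_1([3,1,1])\geq4$ I expect a small gadget of $1\times1\times3$ translates forcing a $K_4$-free but $4$-chromatic contact graph (note Theorem \ref{bdclique} gives clique number at most $4$, and the closing remark says $\CG_1([a,1,1])$ has no clique of size $4$, so the bound here must be witnessed by an \emph{odd-hole-type} obstruction rather than a clique), and for $\chi_1([2,2,1])\geq5$ a slightly larger hand-checkable configuration. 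For the three bounds equal to $6$ the configurations are larger, and here I would rely on the SAT-based certification described in Section \ref{pseudo}: encode ``$G$ is $5$-colorable'' as a Boolean formula and exhibit that a solver reports \textsc{unsat}, while exhibiting an explicit proper $6$-coloring to confirm the matching realizability.

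The key steps, in order, are: (1) list the root coordinates of each of the five configurations; (2) verify disjoint interiors and compute the contact graph for each; (3) produce an explicit $k$-coloring to show the configuration is $k$-colorable (confirming the bound is tight for these witnesses, though only $\chi\geq k$ is claimed); and (4) prove non-$(k-1)$-colorability, by hand for the first two and by certified SAT search for the last three. I would package (4) for the $6$-bounds as a reproducibility statement pointing to the solver and to the alternative cross-checking software mentioned in the introduction.

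The main obstacle is step (4) for the configurations with chromatic number $6$: ruling out a $5$-coloring is a coNP-type assertion that, for configurations of the sizes found by pseudorandom search, is genuinely infeasible to check by hand, so the proof must lean on computational certification rather than a human-readable argument. The honest mathematical content of the proposition is therefore concentrated in the by-hand cases $\chi_1([3,1,1])\geq4$ and $\chi_1([2,2,1])\geq5$, where a clean combinatorial obstruction (a critically $k$-chromatic subgraph, e.g. built from odd cycles glued so as to defeat $(k-1)$-colorings) can be exhibited directly; the remaining bounds I would state as established by verified computer search, with coordinates recorded for independent confirmation.
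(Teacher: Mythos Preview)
Your proposal is correct and matches the paper's approach: exhibit explicit configurations (coordinates in the appendix), verify the two small cases by hand, and certify the three $6$-bounds by SAT solver. The only refinement worth noting is that the paper's by-hand arguments are more concrete than your ``odd-hole-type'' speculation: for $3\times1\times1$ it observes that each of the two layers has a unique $3$-coloring up to permutation and then checks the permutations are incompatible; for $2\times2\times1$ it starts from a central $4$-clique forcing the top two layers, then shows the third layer must use four distinct colors so the bottom needs a fifth.
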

\begin{proof}
The first claim is seen by the 10-block building
\begin{center}
\includegraphics[width=6cm]{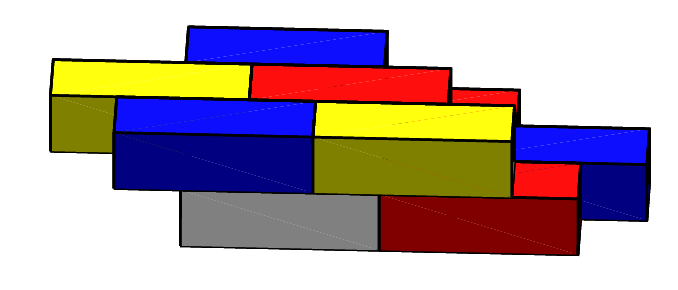}
\end{center}
requiring 4 colors, which in exploded view is
\begin{center}
\includegraphics[width=6cm]{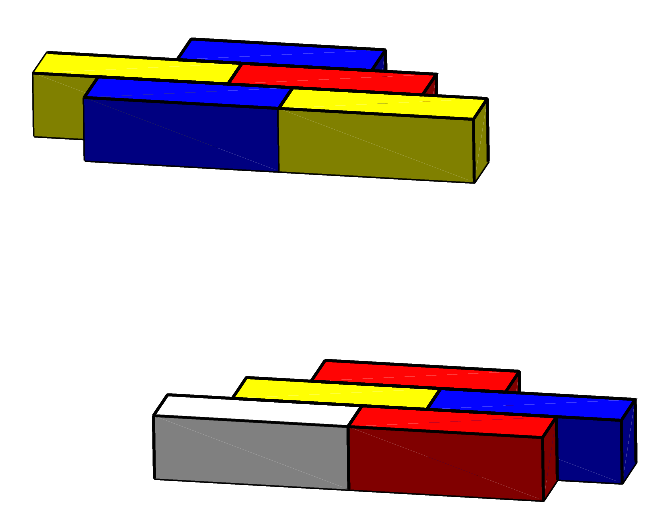}
\end{center}
For $2\times 2\times 1$ we have the 11-block building
\begin{center}
\includegraphics[width=6cm]{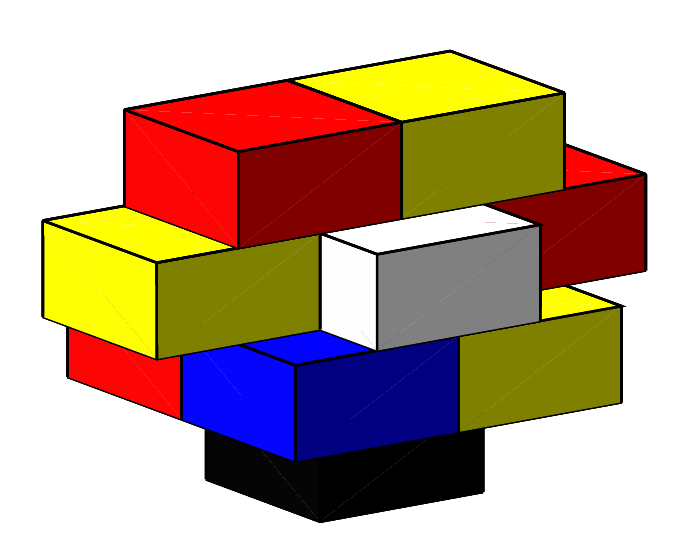}
\end{center}
requiring 5 colors, which in exploded view is
\begin{center}
\includegraphics[width=4cm]{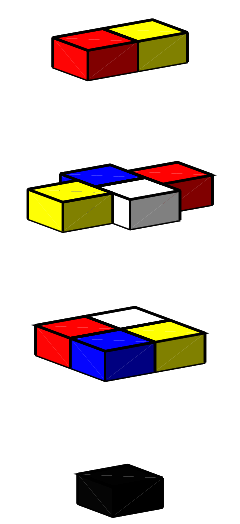}
\end{center}
It is fairly easy to check these claims by hand. In the first, one rules out a proper 3-coloring by noting that both layers have a unique 3-coloring up to permutation of the colors, and  then check that no choice of permutations is coherent across the layers. In the second example, one notes that if one had only four colors, the two top layers would have to be colored as indicated due to the central 4-clique. One can then show that the third layer cannot have any repeated colors, and consequently a fifth color is needed at the bottom.

The remaining buildings all require 6 colors and are difficult to check by hand. For $5\times 2\times 1$ we have the 190-cuboid building
which in exploded view is
\begin{center}
\includegraphics[width=8cm]{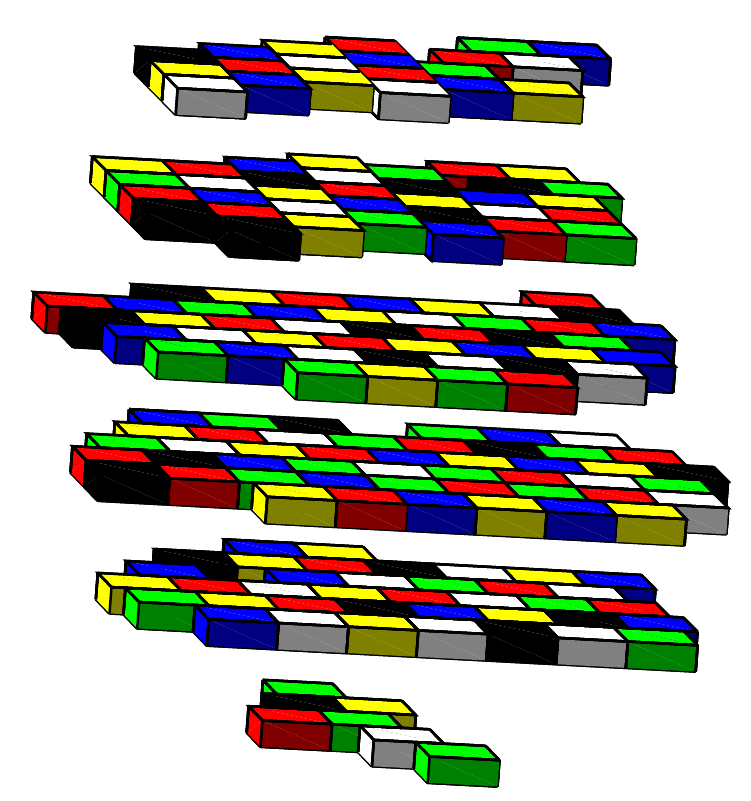}
\end{center}
For $4\times 3\times 1$ we have the perhaps more understandable 44-cuboid building
\begin{center}
\includegraphics[width=6cm]{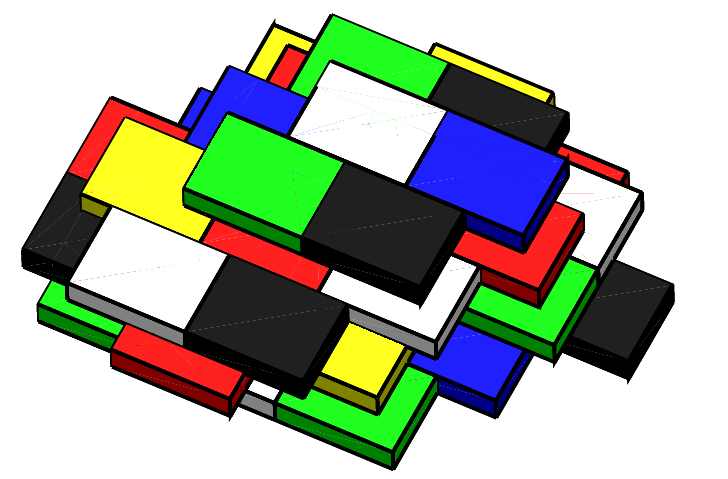}
\end{center}
which in exploded view is
\begin{center}
\includegraphics[width=8cm]{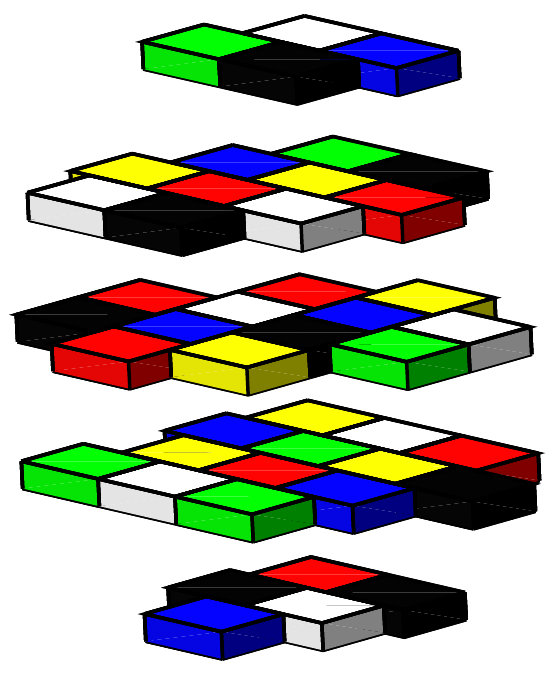}
\end{center}
and finally for $2\times 2\times 2$ we have the 75-cuboid building
\begin{center}
\includegraphics[width=8cm]{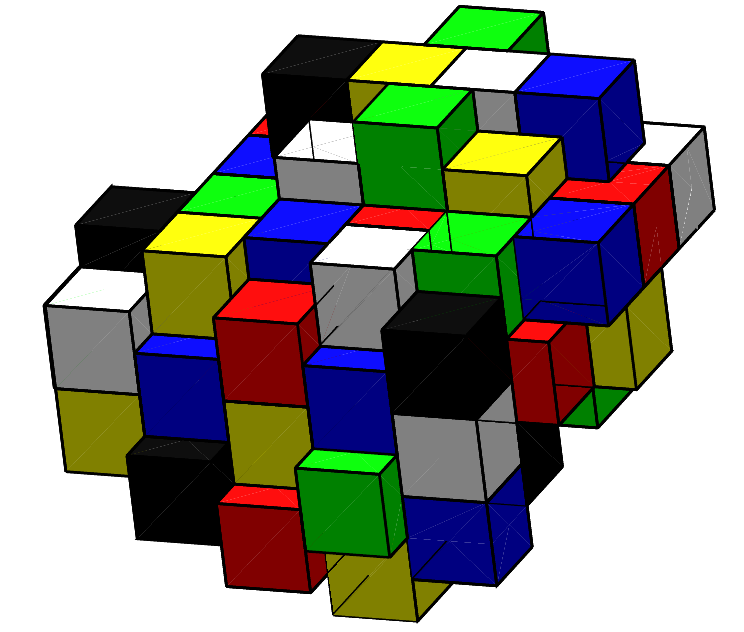}
\end{center}
which is not layered and hence not usefully displayable in exploded view.
\end{proof}

\begin{remar}
We recall from Section \ref{pseudo} that all of the previous configurations are critical -- if any one cuboid is removed, the chromatic number decreases by one.

The first example of a building with $2\times 2\times1$ requiring 5 colors was found in 2012 by Nicolas Bru Frantzen,
Mikkel B\o{}hlers Nielsen and 
Alex Voigt Hansen  as students in the University of Copenhagen course ``Experimental mathematics''. The one presented here has slightly fewer blocks.
\end{remar}

We now move on to upper bounds.

\begin{theor}\label{boundi}\mbox{}
\begin{enumerate}[(a)]
\item $\chi_1([1,1,1])\leq \perco{1}{1}{1}{1}{2}{2}{2}=2$.
\item $\chi_1([2,1,1])\leq \perco{1}{2}{1}{1}{6}{2}{2}=3$.
\item $\chi_1([a,1,1])\leq \perco{1}{a}{1}{1}{2a}{2}{2}= 4$ for all $a\geq 3$.
\item  $\chi_1([2,2,1])\leq \perco{1}{2}{2}{1}{10}{10}{2}= 5$.
\item $\chi_1([a,2,1])\leq \perco{1}{a}{2}{1}{2a}{6}{2}\leq 6$ for all $a\geq 3$.
\item $\chi_1([a,3,1])\leq \perco{1}{a}{3}{1}{2a}{21}{2}\leq 7$ for all $a\geq 3$.
\item $\chi_1([a,4,1])\leq \perco{1}{a}{4}{1}{2a}{28}{2}\leq 7$ for all $a\geq 4$.
\item $\chi_1([a,b,c])\leq\perco{1}{a}{b}{c}{2a}{2b}{2c}\leq 8$ for all $a\geq b\geq c$.
\end{enumerate}
\end{theor}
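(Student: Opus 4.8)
The plan is to prove the two inequalities separately. The first inequality, $\chi_1([a,b,c])\leq\perco{1}{a}{b}{c}{2a}{2b}{2c}$, is an instance of the general inherited-coloring principle recorded in Section \ref{periodic}: any periodic coloring $\kappa\colon\ZZ^3\to\{1,\dots,n\}$ that assigns distinct colors to the roots of touching cuboids restricts, on any finite configuration $G\in\CG_1([a,b,c])$, to a proper $n$-coloring of $G$. Taking the maximum over all such $G$ yields $\chi_1([a,b,c])\leq n$ whenever a proper periodic $n$-coloring exists, and in particular $\chi_1([a,b,c])\leq\perco{1}{a}{b}{c}{2a}{2b}{2c}$. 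So the substance of the statement is the second inequality, $\perco{1}{a}{b}{c}{2a}{2b}{2c}\leq 8$, which asks for an explicit proper coloring with period $2a\times 2b\times 2c$ using at most eight colors.

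First I would exhibit such a coloring. Since $\bullet=1$ there are no rotations, so every cuboid is a translate of $[0,a]\times[0,b]\times[0,c]$ and is determined by its integer root $(x,y,z)$. I assign to it the color triple
\[
\kappa(x,y,z)=\left(\left\lfloor x/a\right\rfloor \bmod 2,\ \left\lfloor y/b\right\rfloor \bmod 2,\ \left\lfloor z/c\right\rfloor \bmod 2\right)\in\{0,1\}^3,
\]
giving $2^3=8$ colors in total. Each coordinate is invariant under $x\mapsto x+2a$, $y\mapsto y+2b$ and $z\mapsto z+2c$ respectively, so $\kappa$ is periodic with the required period $2a\times 2b\times 2c$. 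No ordering of $a,b,c$ is used, so the hypothesis $a\geq b\geq c$ serves only to normalize the statement.

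It remains to verify that $\kappa$ is proper. Suppose two cuboids with roots $(x,y,z)$ and $(x',y',z')$ touch; by the analysis preceding \eqref{touchx}, and because in $\CG_1$ all dimensions agree, one of \eqref{touchx}--\eqref{touchz} holds. In case \eqref{touchx} the disjunction $x+a=x'\vee x'+a=x$ forces $x'=x\pm a$, whence $\left\lfloor x'/a\right\rfloor=\left\lfloor x/a\right\rfloor\pm 1$ and the first coordinate of the color triple flips; thus $\kappa(x,y,z)\neq\kappa(x',y',z')$ irrespective of the $y$- and $z$-overlaps. The cases \eqref{touchy} and \eqref{touchz} are identical after cyclically permuting the roles of the axes, using the second and third color coordinates respectively. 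Hence touching cuboids always receive distinct colors and $\kappa$ is a proper $8$-coloring, establishing $\perco{1}{a}{b}{c}{2a}{2b}{2c}\leq 8$.

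The argument is short and essentially free of case analysis, so there is no real obstacle; the one point requiring care is the direction of the parity argument. What makes the eight colors work is precisely that a face-contact along a given axis---encoded by the equality clause in the corresponding condition \eqref{touchx}--\eqref{touchz}---shifts exactly one cuboid length along that axis, which is exactly the shift that toggles the associated floor parity. This is why the coarse period $2a\times 2b\times 2c$ suffices and why no interaction between distinct axes needs to be controlled, in contrast to the sharper bounds \emph{(d)}--\emph{(g)} where the periods in the non-unit directions are chosen much larger than twice the side length.
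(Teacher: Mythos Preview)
Your argument for part \emph{(h)} is correct and essentially identical to the paper's: both define the coloring $\kappa(x,y,z)=(\lfloor x/a\rfloor\bmod 2,\ \lfloor y/b\rfloor\bmod 2,\ \lfloor z/c\rfloor\bmod 2)$ (the paper writes this as $((x\bmod 2a)/a,\dots)$ with integer division, which is the same map) and verify properness by noting that the equality clause in whichever of \eqref{touchx}--\eqref{touchz} applies forces a shift of exactly one side length along that axis, toggling the corresponding parity bit.

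However, the stated theorem has eight parts, and you have only proved \emph{(h)}. Parts \emph{(a)}--\emph{(g)} assert strictly sharper bounds (down to $2$, $3$, $4$, $5$, $6$, $7$, $7$ colors respectively), and each requires its own bespoke periodic coloring, none of which you supply. You allude to \emph{(d)}--\emph{(g)} in your final paragraph as needing larger periods, which is correct, but you do not construct them; parts \emph{(a)}--\emph{(c)} are not mentioned at all. In the paper the substance of the theorem lies precisely in exhibiting explicit coloring tables for each of these cases and checking that the monochromatic regions neither touch nor allow two non-colliding cuboids of the same color---this is routine for \emph{(a)}--\emph{(c)} but genuinely delicate for \emph{(d)}, \emph{(f)} and \emph{(g)}. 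As written, your proposal proves only the weakest (and easiest) clause of the eight.
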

\begin{proof}
We indicate colorings diagrammatically. All these colorings will be so completely systematic in the concrete choice of colors that it suffices to check the color 1 in these respects.
In our diagrams, the shading shows those regions that might be occupied by a block with the color 1; we must check that these regions do not touch. Also, we must check that the repeated colors in rectangular configurations are close enough that two cuboids placed there would collide, and hence can be colored with the same color.

The first observation is just the standard checkerboard coloring in two alternating layers:
\begin{center}
\begin{tabular}{|c|c|}\hline
\nsp{1}&2\\\hline
2&\nsp{1}\\\hline
\end{tabular}\qquad
\begin{tabular}{|c|c|}\hline
2&\nsp 1\\\hline
\nsp 1&2\\\hline
\end{tabular}
\end{center}

For $2\times 1\times 1$, a periodic coloring is given by  
\begin{center}
\begin{tabular}{|c|c|c|c|c|c|}\hline
\nsp{1}&\nsp{1}&\nsp{2}&2&3&3\\\hline
2&3&3&\nsp{1}&\nsp{1}&\nsp{2}\\\hline
\end{tabular}\qquad
\begin{tabular}{|c|c|c|c|c|c|}\hline
2&3&3&\nsp 1&\nsp 1&\nsp 2\\\hline
\nsp 1&\nsp 1&\nsp 2&2&3&3\\\hline
\end{tabular}
\end{center}

For $a\times 1\times 1$, illustrated here for $a=4$, we use 
\begin{center}
\begin{tabular}{|c|c|c|c|c|c|c|c|c|c|c|c|c|c|c|c|}\hline
\nsp 1&\nsp 1&\nsp 1&\nsp 1&\nsp 2&\nsp 2&\nsp 2&2\\\hline
3&3&3&3&4&4&4&4\\\hline
\end{tabular}
\end{center}
and
\begin{center}
\begin{tabular}{|c|c|c|c|c|c|c|c|c|c|c|c|c|c|c|c|}\hline
3&3&3&3&4&4&4&4\\\hline
\nsp 1&\nsp 1&\nsp 1&\nsp 1&\nsp 2&\nsp 2&\nsp 2&2\\\hline
\end{tabular}
\end{center}

For $2\times 2\times 1$ we have
\begin{center}
\begin{tabular}{|c|c|c|c|c|c|c|c|c|c|}\hline
\nsp 1&\nsp 1&\nsp 2&2&3&3&4&4&5&5\\\hline
\nsp 1&\nsp 1&\nsp 2&2&\nsp 3&\nsp 3&\nsp 4&4&5&5\\\hline
4&4&5&5&\nsp 1&\nsp 1&\nsp 2&2&3&3\\\hline
\nsp 4&4&5&5&\nsp 1&\nsp 1&\nsp 2&2&\nsp 3&\nsp3\\\hline
\nsp 2&2&3&3&4&4&5&5&\nsp 1&\nsp 1\\\hline
\nsp 2&2&\nsp 3&\nsp 3&\nsp 4&4&5&5&\nsp 1&\nsp 1\\\hline
5&5&\nsp 1&\nsp 1&\nsp 2&2&3&3&4&4\\\hline
5&5&\nsp 1&\nsp 1&\nsp 2&2&\nsp 3&\nsp 3&\nsp 4&4\\\hline
3&3&4&4&5&5&\nsp 1&\nsp 1&\nsp 2&2\\\hline
\nsp 3&\nsp 3&\nsp 4&4&5&5&\nsp 1&\nsp 1&\nsp 2&2\\\hline
\end{tabular}
\end{center}
and
\begin{center}
\begin{tabular}{|c|c|c|c|c|c|c|c|c|c|}\hline
4&5&5&\nsp 1&\nsp 1&\nsp 2&2&\nsp 3&\nsp3&\nsp 4\\\hline
2&3&3&4&4&5&5&\nsp 1&\nsp 1&\nsp 2\\\hline
2&\nsp 3&\nsp 3&\nsp 4&4&5&5&\nsp 1&\nsp 1&\nsp 2\\\hline
5&\nsp 1&\nsp 1&\nsp 2&2&3&3&4&4&5\\\hline
5&\nsp 1&\nsp 1&\nsp 2&2&\nsp 3&\nsp 3&\nsp 4&4&5\\\hline
3&4&4&5&5&\nsp 1&\nsp 1&\nsp 2&2&3\\\hline
\nsp 3&\nsp 4&4&5&5&\nsp 1&\nsp 1&\nsp 2&2&\nsp 3\\\hline
\nsp 1&\nsp 2&2&3&3&4&4&5&5&\nsp 1\\\hline
\nsp 1&\nsp 2&2&\nsp 3&\nsp 3&\nsp 4&4&5&5&\nsp 1\\\hline
4&5&5&\nsp 1&\nsp 1&\nsp 2&2&3&3&4\\\hline
\end{tabular}
\end{center}

The colorings (d)--(f) are similar in nature; we present them for the square case $a\times a\times 1$ and with the $x$ and $y$ coordinates transposed to conserve space. It is straightforward to extend to $b\times a\times 1$ with $b>a$ by just expanding in the vertical (short) direction. For $2\times 2\times 1$, we use
\begin{center}
\begin{tabular}{|c|c|c|c|c|c|c|c|c|c|c|c|}\hline
\nsp 1&\nsp 1&\nsp 2&2&3&3& 4& 4& 5&5&6&6\\\hline
\nsp 1&\nsp 1&\nsp 2&2&3&3&\nsp 4&\nsp 4&\nsp 5&5&6&6\\\hline
4&4&5&5&6&6&\nsp 1&\nsp 1&\nsp 2&2&3&3\\\hline
\nsp 4&\nsp 4&\nsp 5&5&6&6&\nsp 1&\nsp 1&\nsp 2&2&3&3\\\hline
\end{tabular}
\end{center}
and
\begin{center}
\begin{tabular}{|c|c|c|c|c|c|c|c|c|c|c|c|}\hline
5&6&6&\nsp 1&\nsp 1&\nsp 2&2&3&3&4& 4& 5\\\hline
5&6&6&\nsp 1&\nsp 1&\nsp 2&2&3&3&\nsp 4&\nsp 4&\nsp 5\\\hline
2&3&3&4&4&5&5&6&6&\nsp 1&\nsp 1&\nsp 2\\\hline
2&3&3&\nsp 4&\nsp 4&\nsp 5&5&6&6&\nsp 1&\nsp 1&\nsp 2\\\hline
\end{tabular}
\end{center}
This uses one more color than what we need for $2\times 2\times 1$, but generalizes to all $a\times 2\times 1$ as opposed to the $10\times 10\times 2$ coloring presented above.

For $3\times 3\times 1$, we have
\begin{center}
\begin{tabular}{|c|c|c|c|c|c|c|c|c|c|c|c|c|c|c|c|c|c|c|c|c|}\hline
\nsp 1&\nsp 1&\nsp1&\nsp 2&\nsp 2&2&3&3&3& 4& 4& 4&5&5&5&6&6&6&7&7&7\\\hline
\nsp 1&\nsp 1&\nsp1&\nsp 2&\nsp 2&2&3&3&3& 4& \nsp 4& \nsp 4&\nsp 5&\nsp 5&\nsp 5&6&6&6&7&7&7\\\hline
\nsp 1&\nsp 1&\nsp1&\nsp 2&\nsp 2&2&3&3&3& 4& \nsp 4& \nsp 4&\nsp 5&\nsp 5&\nsp 5&6&6&6&7&7&7\\\hline
  4&5&5&5&6&6&6&7&7&7&\nsp 1&\nsp 1&\nsp1&\nsp 2&\nsp 2&2&3&3&3& 4&4\\\hline
  \nsp 4&\nsp 5&\nsp 5&\nsp 5&\nsp 6&6&6&7&7&7&\nsp 1&\nsp 1&\nsp1&\nsp 2&\nsp 2&2&3&3&3& 4&4\\\hline
  \nsp 4&\nsp 5&\nsp 5&\nsp 5&\nsp 6&6&6&7&7&7&\nsp 1&\nsp 1&\nsp1&\nsp 2&\nsp 2&2&3&3&3& 4&4\\\hline
    \end{tabular}
\end{center}
and
\begin{center}
\begin{tabular}{|c|c|c|c|c|c|c|c|c|c|c|c|c|c|c|c|c|c|c|c|c|}\hline
6&6&7&7&7&\nsp 1&\nsp 1&\nsp1&\nsp 2&\nsp 2&2&3&3&3& 4& 4& 4&5&5&5&6\\\hline
6&6&7&7&7&\nsp 1&\nsp 1&\nsp1&\nsp 2&\nsp 2&2&3&3&3& 4& \nsp 4& \nsp 4&\nsp 5&\nsp 5&\nsp 5&6\\\hline
6&6&7&7&7&\nsp 1&\nsp 1&\nsp1&\nsp 2&\nsp 2&2&3&3&3& 4& \nsp 4& \nsp 4&\nsp 5&\nsp 5&\nsp 5&6\\\hline
3&3&3& 4&4&  4&5&5&5&6&6&6&7&7&7&\nsp 1&\nsp 1&\nsp1&\nsp 2&\nsp 2&2\\\hline
3&3&3& 4&4&\nsp  4&\nsp 5&\nsp 5&\nsp 5&\nsp 6&6&6&7&7&7&\nsp 1&\nsp 1&\nsp1&\nsp 2&\nsp 2&2\\\hline
3&3&3& 4&4&  \nsp 4&\nsp 5&\nsp 5&\nsp 5&\nsp 6&6&6&7&7&7&\nsp 1&\nsp 1&\nsp1&\nsp 2&\nsp 2&2\\\hline
    \end{tabular}
\end{center}
\newcommand{\sss}[1]{\!{\scriptsize #1}\!}
\newcommand{\nss}[1]{\nsp{\sss{#1}}}
and for $4\times 4\times 1$
\begin{center}
\begin{tabular}{|c|c|c|c|c|c|c|c|c|c|c|c|c|c|c|c|c|c|c|c|c|c|c|c|c|c|c|c|}\hline
\nss 1&\nss 1&\nss1&\nss 1& \nss 2&\nss 2&\nss 2&\sss 2&\sss 3&\sss 3&\sss 3&\sss 3&\sss 4&\sss 4&\sss 4&\sss 4&\sss 5&\sss 5&\sss 5&\sss 5&\sss 6&\sss 6&\sss 6&\sss 6&\sss 7&\sss 7&\sss 7&\sss 7\\\hline
\nss 1&\nss 1&\nss1&\nss 1& \nss 2&\nss 2&\nss 2&\sss 2&\sss 3&\sss 3&\sss 3&\sss 3&\sss 4&\sss 4&\nss 4&\nss 4&\nss 5&\nss 5&\nss 5&\nss 5&\nss 6&\sss 6&\sss 6&\sss 6&\sss 7&\sss 7&\sss 7&\sss 7\\\hline
\nss 1&\nss 1&\nss1&\nss 1& \nss 2&\nss 2&\nss 2&\sss 2&\sss 3&\sss 3&\sss 3&\sss 3&\sss 4&\sss 4&\nss 4&\nss 4&\nss 5&\nss 5&\nss 5&\nss 5&\nss 6&\sss 6&\sss 6&\sss 6&\sss 7&\sss 7&\sss 7&\sss 7\\\hline
\nss 1&\nss 1&\nss1&\nss 1& \nss 2&\nss 2&\nss 2&\sss 2&\sss 3&\sss 3&\sss 3&\sss 3&\sss 4&\sss 4&\nss 4&\nss 4&\nss 5&\nss 5&\nss 5&\nss 5&\nss 6&\sss 6&\sss 6&\sss 6&\sss 7&\sss 7&\sss 7&\sss 7\\\hline
\sss 4&\sss 4&\sss 5&\sss 5&\sss 5&\sss 5&\sss 6&\sss 6&\sss 6&\sss 6&\sss 7&\sss 7&\sss 7&\sss 7&\nss 1&\nss 1&\nss1&\nss 1& \nss 2&\nss 2&\nss 2&\sss 2&\sss 3&\sss 3&\sss 3&\sss 3&\sss 4&\sss 4\\\hline
\nss 4&\nss 4&\nss 5&\nss 5&\nss 5&\nss 5&\nss 6&\sss 6&\sss 6&\sss 6&\sss 7&\sss 7&\sss 7&\sss 7&\nss 1&\nss 1&\nss1&\nss 1& \nss 2&\nss 2&\nss 2&\sss 2&\sss 3&\sss 3&\sss 3&\sss 3&\sss 4&\sss 4\\\hline
\nss 4&\nss 4&\nss 5&\nss 5&\nss 5&\nss 5&\nss 6&\sss 6&\sss 6&\sss 6&\sss 7&\sss 7&\sss 7&\sss 7&\nss 1&\nss 1&\nss1&\nss 1& \nss 2&\nss 2&\nss 2&\sss 2&\sss 3&\sss 3&\sss 3&\sss 3&\sss 4&\sss 4\\\hline
\nss 4&\nss 4&\nss 5&\nss 5&\nss 5&\nss 5&\nss 6&\sss 6&\sss 6&\sss 6&\sss 7&\sss 7&\sss 7&\sss 7&\nss 1&\nss 1&\nss1&\nss 1& \nss 2&\nss 2&\nss 2&\sss 2&\sss 3&\sss 3&\sss 3&\sss 3&\sss 4&\sss 4\\\hline
    \end{tabular}
\end{center}
along with 
\begin{center}
\begin{tabular}{|c|c|c|c|c|c|c|c|c|c|c|c|c|c|c|c|c|c|c|c|c|c|c|c|c|c|c|c|}\hline
\sss 6&\sss 6&\sss 6&\sss 7&\sss 7&\sss 7&\sss 7&\nss 1&\nss 1&\nss1&\nss 1& \nss 2&\nss 2&\nss 2&\sss 2&\sss 3&\sss 3&\sss 3&\sss 3&\sss 4&\sss 4&\sss 4&\sss 4&\sss 5&\sss 5&\sss 5&\sss 5&\sss 6\\\hline
\sss 6&\sss 6&\sss 6&\sss 7&\sss 7&\sss 7&\sss 7&\nss 1&\nss 1&\nss1&\nss 1& \nss 2&\nss 2&\nss 2&\sss 2&\sss 3&\sss 3&\sss 3&\sss 3&\sss 4&\sss 4&\nss 4&\nss 4&\nss 5&\nss 5&\nss 5&\nss 5&\nss 6\\\hline
\sss 6&\sss 6&\sss 6&\sss 7&\sss 7&\sss 7&\sss 7&\nss 1&\nss 1&\nss1&\nss 1& \nss 2&\nss 2&\nss 2&\sss 2&\sss 3&\sss 3&\sss 3&\sss 3&\sss 4&\sss 4&\nss 4&\nss 4&\nss 5&\nss 5&\nss 5&\nss 5&\nss 6\\\hline
\sss 6&\sss 6&\sss 6&\sss 7&\sss 7&\sss 7&\sss 7&\nss 1&\nss 1&\nss1&\nss 1& \nss 2&\nss 2&\nss 2&\sss 2&\sss 3&\sss 3&\sss 3&\sss 3&\sss 4&\sss 4&\nss 4&\nss 4&\nss 5&\nss 5&\nss 5&\nss 5&\nss 6\\\hline
\sss 2&\sss 3&\sss 3&\sss 3&\sss 3&\sss 4&\sss 4\sss 4&\sss 4&\sss4&\sss 5&\sss 5&\sss 5&\sss 5&\sss 6&\sss 6&\sss 6&\sss 6&\sss 7&\sss 7&\sss 7&\sss 7&\nss 1&\nss 1&\nss1&\nss 1& \nss 2&\nss 2&\nss 2\\\hline
\sss 2&\sss 3&\sss 3&\sss 3&\sss 3&\sss 4&\sss 4&\nss 4&\nss 4&\nss 5&\nss 5&\nss 5&\nss 5&\nss 6&\sss 6&\sss 6&\sss 6&\sss 7&\sss 7&\sss 7&\sss 7&\nss 1&\nss 1&\nss1&\nss 1& \nss 2&\nss 2&\nss 2\\\hline
\sss 2&\sss 3&\sss 3&\sss 3&\sss 3&\sss 4&\sss 4&\nss 4&\nss 4&\nss 5&\nss 5&\nss 5&\nss 5&\nss 6&\sss 6&\sss 6&\sss 6&\sss 7&\sss 7&\sss 7&\sss 7&\nss 1&\nss 1&\nss1&\nss 1& \nss 2&\nss 2&\nss 2\\\hline
\sss 2&\sss 3&\sss 3&\sss 3&\sss 3&\sss 4&\sss 4&\nss 4&\nss 4&\nss 5&\nss 5&\nss 5&\nss 5&\nss 6&\sss 6&\sss 6&\sss 6&\sss 7&\sss 7&\sss 7&\sss 7&\nss 1&\nss 1&\nss1&\nss 1& \nss 2&\nss 2&\nss 2\\\hline    \end{tabular}
\end{center}

In the last and general case, the idea is simply to color each octant of the $2a\times 2b\times 2c$ region with a separate color. For $2\times 2\times 2$ we would use
\begin{center}
\begin{tabular}{|c|c|c|c|}\hline
\nsp 1&\nsp 1&\nsp 2& 2\\\hline
\nsp 1&\nsp 1&\nsp 2& 2\\\hline
 3&3& 4&4\\\hline
\nsp 3&\nsp 3&\nsp 4&4\\\hline
\end{tabular}\quad
\begin{tabular}{|c|c|c|c|}\hline
\nsp 1&\nsp 1&\nsp 2& 2\\\hline
\nsp 1&\nsp 1&\nsp 2& 2\\\hline
 3&3& 4&4\\\hline
\nsp 3&\nsp 3&\nsp 4&4\\\hline
\end{tabular}\quad
\begin{tabular}{|c|c|c|c|}\hline
\nsp 5&\nsp 5&\nsp 6& 6\\\hline
\nsp 5&\nsp 5&\nsp 6& 6\\\hline
 7&7& 8&8\\\hline
\nsp 7&\nsp 7&\nsp 8&8\\\hline
\end{tabular}\quad
\begin{tabular}{|c|c|c|c|}\hline
5& 5& 6& 6\\\hline
5&5& 6& 6\\\hline
 7&7& 8&8\\\hline
 7& 7& 8&8\\\hline
\end{tabular}
\end{center}
More formally, we could color with an entry in $\{0,1\}^3$ defined as
\[
\tau(x,y,z)=\left((x \!\!\mod 2a)/a,(y \!\!\mod 2b)/b,(z \!\!\mod 2c)/c\right).
\]
with ``$/$'' indicating integer division with no remainder. If two cuboids rooted at $(x,y,z)$ and $(x',y',z')$ touched via \eqref{touchx}, we would have
\[
x-x'=\pm a
\]
which remains true modulo $2a$. Consequently, one of the reduced coordinates must lie in $\{0,\dots a-1\}$ and the other in $\{a,\dots 2a-1\}$, showing that the first entry of $\tau$ would differ. Identical reasoning from  \eqref{touchy}
or \eqref{touchz} shows the claim.
\end{proof}

\begin{remar}
Ivana Atanasovska, Thomas Barnholdt, Niklas Hjuler and
Mikkel Strunge found the upper bounds for $2\times 2\times 1$, $3 \times 3\times 1$ and $4\times 4\times 1$ as students in the course ``Experimental mathematics''  at University of Copenhagen in 2014.
 One notes how the 7-colorings given above work because there is room for four shaded regions of width 5 and 7, respectively, because $4\cdot 5<7\cdot 3$ and $4\cdot 7= 7\cdot 4$. Atanasovska et al essentially noted in their course work that there would not be room for four shaded regions in a similar attempt for $5\times 5\times 1$, because $4\cdot 9>7\cdot 5$.
 
We realized that there is also the $6$-coloring given in  (e), and that such colorings generalize as given, aided by computer experiments as outlined in Section \ref{periodic}. 
\end{remar}

We summarize below the cases now solved.

\begin{corol}
\begin{eqnarray*}
\chi_1([1,1,1])&=&2\\
\chi_1([2,1,1])&=&3\\
\chi_1([n,1,1])&=&4, \qquad n\geq 3\\
\chi_1([2,2,1])&=&5\\
\chi_1([n,2,1])&=&6, \qquad n\geq 5.
\end{eqnarray*}
\end{corol}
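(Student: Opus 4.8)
The plan is to read the corollary as an assembly of matching upper and lower bounds: five equalities, each split into a ``$\leq$'' and a ``$\geq$''. All five upper bounds are already in hand, so the first step is simply to quote them. Parts (a), (b), (c), (d) and (e) of Theorem~\ref{boundi} give $\chi_1([1,1,1])\leq2$, $\chi_1([2,1,1])\leq3$, $\chi_1([n,1,1])\leq4$ for $n\geq3$, $\chi_1([2,2,1])\leq5$, and $\chi_1([a,2,1])\leq6$ for all $a\geq3$ (in particular for $n\geq5$). The entire remaining task then reduces to producing the five matching lower bounds.

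Three of these lower bounds I would obtain for free from Proposition~\ref{lowerbounds} together with the monotonicity packaged in Lemma~\ref{increases}. Since $\CG_1$ is inclusion-monotone in the dimensions, $\chi_1$ is itself monotone; checking componentwise that $[3,1,1]\leq[n,1,1]$ for $n\geq3$ and that $[5,2,1]\leq[n,2,1]$ for $n\geq5$, I then get $\chi_1([n,1,1])\geq\chi_1([3,1,1])\geq4$ and $\chi_1([n,2,1])\geq\chi_1([5,2,1])\geq6$ in the stated ranges, while $\chi_1([2,2,1])\geq5$ is quoted directly. This is the step where the only real care is needed, namely to run the monotonicity in the direction that lets the finite witnesses of the Proposition propagate upward to all admissible $n$.

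The two remaining lower bounds are small and not contained in the Proposition, so I would supply them by hand. For $[1,1,1]$, two face-adjacent unit cubes already form an edge, giving $\chi_1([1,1,1])\geq2$. For $[2,1,1]$ I would exhibit an explicit triangle in some contact graph: the bricks rooted at $(0,0,0)$, $(2,0,0)$ and $(1,0,1)$, that is $[0,2]\times[0,1]\times[0,1]$, $[2,4]\times[0,1]\times[0,1]$ and $[1,3]\times[0,1]\times[1,2]$. The first two meet in the full face $\{2\}\times[0,1]\times[0,1]$, and the third meets each of the other two in a nondegenerate rectangle lying in the plane $z=1$ (over $x\in[1,2]$ and over $x\in[2,3]$, respectively), while no two of the three share an interior point. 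This configuration realizes $K_3$, so $\chi_1([2,1,1])\geq3$.

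Combining each lower bound with its matching upper bound yields all five equalities. I expect no genuine obstacle here: the corollary is a bookkeeping consolidation of Theorem~\ref{boundi} and Proposition~\ref{lowerbounds}, and the only new content is the one-line triangle witnessing $\chi_1([2,1,1])\geq3$ together with the routine componentwise verification that Lemma~\ref{increases} applies in the ranges $n\geq3$ and $n\geq5$.
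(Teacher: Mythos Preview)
Your proposal is correct and follows the same approach the paper implicitly takes: the corollary is presented there simply as a summary of cases ``now solved'', with the upper bounds coming from Theorem~\ref{boundi}(a)--(e) and the lower bounds from Proposition~\ref{lowerbounds} propagated upward via Lemma~\ref{increases}. The only content you add beyond the paper's tacit argument is the explicit two-cube edge and the three-brick triangle witnessing $\chi_1([1,1,1])\geq 2$ and $\chi_1([2,1,1])\geq 3$, which the paper evidently regards as too obvious to state; your verification of these is correct.
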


Our analysis shows that the list of situations leading to a chromatic number $\leq 4$ is exhaustive. For $5$, the additional cases
\begin{equation}\label{fiveopen}
3\times 2\times1,3\times 3\times1,4\times 2\times1
\end{equation}
are the only possibilities.

\begin{corol}
\begin{eqnarray*}
\max_{a}\chi_1([a,1,1])&=&4\\
\max_{a,b}\chi_1([a,b,1])&=&\max_{a}\chi_1([a,a,1])\in\{6,7,8\}\\
\max_{a,b,c}\chi_1([a,b,c])&=&\max_{a}\chi_1([a,a,a])\in\{6,7,8\}
\end{eqnarray*}
and we have
\begin{center}
\begin{tabular}{|c||c|c|c|c|c|}\hline
$a$&$1$&$2$&$3$&$4$&$\geq 5$\\\hline\hline
$\chi_1([a,a,a])$&$2$&$6,7,8$&$6,7,8$&$6,7,8$&$6,7,8$\\\hline
$\chi_1([a,a,1])$&$2$&$5$&$5,6,7$&$6,7$&$6,7,8$\\\hline
$\chi_1([a,1,1])$&$2$&$3$&$4$&$4$&$4$\\\hline
\end{tabular}
\end{center}
\end{corol}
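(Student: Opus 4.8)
The plan is to derive everything from four facts already established: the monotonicity of $\chi_1$ in the cuboid dimensions (Lemma \ref{increases}), the exact values and lower bounds collected in the preceding corollary and in Proposition \ref{lowerbounds}, the family of upper bounds in Theorem \ref{boundi}, and one structural observation I would state explicitly first, namely that $\chi_1$ is symmetric in its three arguments. This symmetry holds because relabeling the coordinate axes carries any configuration in $\CG_1([a,b,c])$ to an isomorphic one in $\CG_1$ of the permuted triple while preserving every contact; hence $\chi_1([a,b,c])$ depends only on the multiset $\{a,b,c\}$, and we may always assume $a\ge b\ge c$. Since $\chi_1$ is integer-valued and bounded above by $8$ (Theorem \ref{boundi}(h)), each maximum in the statement is finite and attained.

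First I would dispose of the three maximum identities. The value $\max_a\chi_1([a,1,1])=4$ is immediate from the row $2,3,4,4,\dots$ recorded in the preceding corollary. For the other two identities the inequality $\ge$ is trivial, since the diagonal $a=b$ (respectively $a=b=c$) indexes a subfamily. For the reverse inequality, given any $[a,b,1]$ with $a\ge b$, Lemma \ref{increases} gives $\chi_1([a,b,1])\le\chi_1([a,a,1])$ by raising $b$ to $a$, and likewise $\chi_1([a,b,c])\le\chi_1([a,a,a])$ by raising both $b$ and $c$ to $a$; taking maxima yields the identities. The containment in $\{6,7,8\}$ then follows from Theorem \ref{boundi}(h) for the upper bound $\le 8$, together with the witnesses $6=\chi_1([5,2,1])\le\chi_1([5,5,1])$ and $\chi_1([2,2,2])\ge 6$ (Proposition \ref{lowerbounds}) for the lower bound $\ge 6$.

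It remains to fill the table, which is pure bookkeeping along the partial order of Lemma \ref{increases}. The bottom row and the cells $\chi_1([1,1,1])=2$ and $\chi_1([2,2,1])=5$ are copied from the preceding corollary. For the diagonal-square row: $\chi_1([3,3,1])\ge\chi_1([2,2,1])=5$ and $\chi_1([3,3,1])\le 7$ by part (f) (valid since $3\ge 3$), giving $\{5,6,7\}$; next $\chi_1([4,4,1])\ge\chi_1([4,3,1])\ge 6$ and $\chi_1([4,4,1])\le 7$ by part (g) (valid since $4\ge 4$), giving $\{6,7\}$; and for $a\ge 5$ we have $\chi_1([a,a,1])\ge\chi_1([5,2,1])=6$ and $\le 8$ by part (h), giving $\{6,7,8\}$. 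For the cube row, every entry with $a\ge 2$ satisfies $\chi_1([a,a,a])\ge\chi_1([2,2,2])\ge 6$ and $\chi_1([a,a,a])\le 8$ by part (h), giving $\{6,7,8\}$, while $\chi_1([1,1,1])=2$.

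The argument carries no real obstacle; the only points that demand care are respecting the validity ranges of the upper-bound parts (c)--(h) of Theorem \ref{boundi} and choosing, for each cell, a lower-bound witness that genuinely sits below the target triple in the order of Lemma \ref{increases}. The slightly delicate entries are $\chi_1([3,3,1])$ and $\chi_1([4,4,1])$, where one must invoke exactly parts (f) and (g) at the very boundary of their stated ranges, and the lower bound for $\chi_1([4,4,1])$, which must be imported from the \emph{non-square} example $[4,3,1]$ of Proposition \ref{lowerbounds} rather than from a square one.
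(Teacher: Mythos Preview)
Your proposal is correct and follows the same approach the paper intends: it derives everything from Lemma \ref{increases}, Proposition \ref{lowerbounds}, Theorem \ref{boundi}, and the preceding corollary, which is exactly how the result is meant to be read off. The paper itself offers only the single sentence ``The maxima ranging over several entries is dominated by the maxima over just one by Lemma \ref{increases}'' and leaves the table entries implicit, so your argument is simply a fuller write-up of what the paper takes for granted.
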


The maxima ranging over several entries is dominated by the maxima over just one by Lemma \ref{increases}.

\section{$\chi_2$}

We now address $\chi_2$ with special emphasis on the case $\chi_2([a,b,1])$. We do not know whether $\chi_2$ grows with the entries as $\chi_1$ and consider it improbable as we expect eccentric cuboids to give more freedom than  cubes, even of large size, but we have no counterexamples at this time.

We can, of course, bound $\chi_2$ below by $\chi_1$ and use Lemma \ref{increases} with Proposition \ref{lowerbounds} to give general lower bounds for $\chi_2$. In particular, we have that $\chi_2([a,b,c])\geq 6$ for $[a,b,c]\geq [5,2,1],[4,3,1]$ or $[2,2,2]$. In most cases, this remains  our best  lower bound. We can do better precisely in the following cases.

\begin{propo}\label{lowerboundsii}
\begin{eqnarray*}
\chi_2([a,1,1])&\geq& 5\text{ for all }a\geq 2\\
\chi_2([a,1,1])&\geq& 6\text{ for all }a\geq 5\\
\chi_2([4,2,1])&\geq& 6.
\end{eqnarray*}
\end{propo}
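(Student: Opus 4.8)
The three inequalities are all lower bounds, to be established by exhibiting explicit configurations in $\CG_2$ whose contact graphs require the stated number of colors; since $\CG_1\subseteq\CG_2$ gives $\chi_2\ge\chi_1$ for free, the real content is to beat the bounds coming from Proposition \ref{lowerbounds} and Lemma \ref{increases} by exploiting the extra freedom of mixing the two allowed orientations $a\times1\times1$ and $1\times a\times1$. For $\chi_2([4,2,1])\ge6$ I would simply present one finite configuration located by the pseudorandom search of Section \ref{pseudo}, display a proper $6$-coloring of it, and certify that it admits no $5$-coloring. As the configuration is critical, non-$5$-colorability can be checked either by the SAT encoding described in Section \ref{pseudo} or, if it is small enough, by the kind of hand argument used for $\chi_1([2,2,1])\ge5$: locate a forced clique, propagate the resulting color constraints through the layers, and derive a contradiction with four colors.

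For the two families the delicate point is the phrase \emph{for all} $a$. One cannot transport a single small example upward by the rescaling of Lemma \ref{increases}: that map stretches one coordinate in blocks of size $a$, but in $\CG_2([a,1,1])$ the $a\times1\times1$ bricks must be stretched along $x$ while the $1\times a\times1$ bricks must be stretched along $y$, and no single coordinatewise affine map lengthens both orientations without distorting their unit faces at the seams. I would therefore give, for each $a$ in the relevant range, an explicit configuration produced by a single $a$-independent recipe. Concretely, I would build a configuration of several layers (one layer is a planar rectangle contact graph and hence $4$-colorable by the four color theorem, so height is essential to force five or six colors) in which every brick position is described as a function of $a$, and argue that the resulting contact graph is \emph{isomorphic} for all admissible $a$.

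The heart of the matter is then to prove this $a$-independence. For each intended pair of bricks I would verify that the touching conditions \eqref{touchx}--\eqref{touchz} hold as identities in $a$ (each intended contact meets one of the three equalities together with its accompanying inequalities for every $a$ in range), that no pair ever collides (interiors stay disjoint for every $a$), and, most importantly, that no \emph{spurious} contact is created as the long dimension grows. Granting this, the contact graph, and hence its chromatic number, is constant across the family, so it suffices to compute that number once at the smallest value of $a$: by hand for the $\ge5$ claim, and by the SAT solver at $a=5$ for the $\ge6$ claim. The obstacle I anticipate is precisely this uniform bookkeeping: a configuration engineered to force five or six colors packs bricks tightly, and one must confirm that lengthening every long brick simultaneously neither merges two previously separated bricks into a collision nor opens a gap that removes a needed edge, uniformly in $a$.
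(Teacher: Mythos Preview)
Your treatment of $\chi_2([4,2,1])\ge 6$ matches the paper: one search-found critical configuration, displayed with a proper $6$-coloring and certified non-$5$-colorable (the paper also records in Remark~\ref{byhand} a $24$-cuboid symmetric variant amenable to exactly the hand argument you sketch).

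For the two families you and the paper agree that Lemma~\ref{increases} is unavailable, but the remedies diverge. You propose a single $a$-parametric recipe whose contact graph is provably constant across the whole range, then compute the chromatic number once at the least $a$. The paper instead makes the simpler observation that if every cuboid in a given configuration has a \emph{free end}---no neighbor blocking one of its short faces---then each cuboid can be lengthened one unit in that direction without altering the contact graph at all; iterating covers all larger $a$ from a single base example. The paper then handles the small values of $a$ with separate ad hoc configurations: for the $\ge 6$ claim the extendable example sits at $a=6$, while $a=5$ requires its own non-extendable $98$-cuboid configuration in which several cuboids are ``constrained at both ends''; for the $\ge 5$ claim the cases $a=2,3,4$ each receive their own example, with $a=2$ singled out as ``significantly harder'' ($36$ cuboids).

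This is where your plan is optimistic. Anchoring a single recipe at the smallest $a$ runs straight into the small-$a$ obstructions the paper actually encounters: the configurations that force many colors at $a=5$ or $a=2$ are tightly packed, with no free direction to grow into, and there is no evidence that a uniform family valid from the bottom of the range exists. The paper's split---one freely extendable example for all sufficiently large $a$, plus finitely many sporadic small cases---is both easier to verify and sidesteps the uniform bookkeeping you correctly flag as the main obstacle.
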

\begin{proof}
We start by considering the building
\begin{center}
\includegraphics[width=6cm]{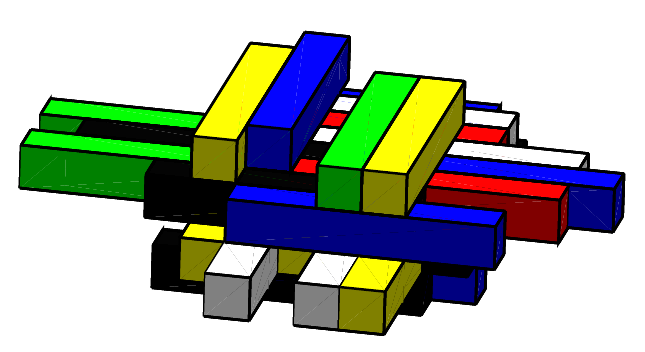}
\end{center}
with 25 $6\times 1\times 1$ cuboids (\ref{611}), requiring six colors. It is easy to see from the exploded view
\begin{center}
\includegraphics[width=6cm]{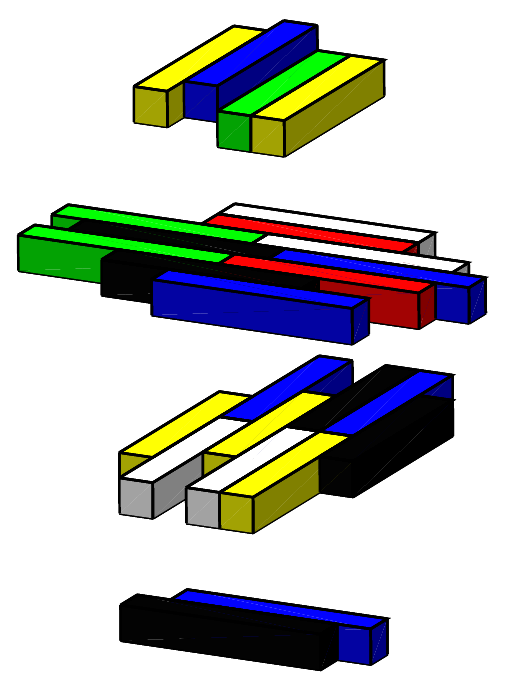}
\end{center}
that a contact graph also requiring six colors can be obtained with cuboids of dimension $a\times 1\times1$ for any $a>6$ by simply extending all cuboids in a free direction. For $a\in\{2,3,4,5\}$ we present configurations requiring five or six colors as indicated. For $5\times 1\times 1$ we can still get to six colors with 98 cuboids (\ref{511}) in
\begin{center}
\includegraphics[width=6cm]{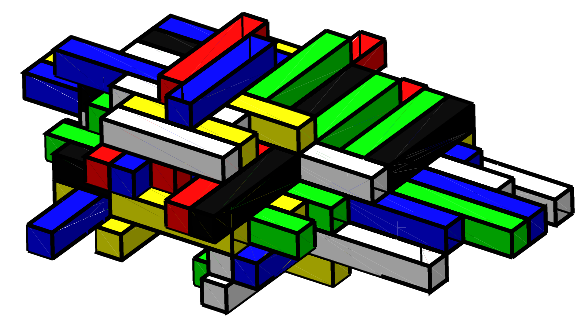}
\end{center}
but this building is not readily extendable, as several cuboids are constrained at both ends by other cuboids:
\begin{center}
\includegraphics[width=6cm]{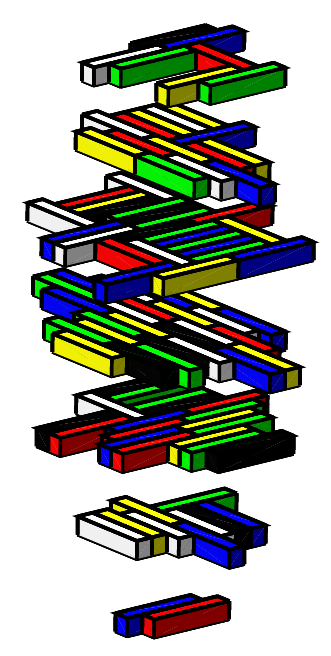}
\end{center}

We can construct configurations requiring five colors for $3\times 1\times 1$ and $4\times 1\times 1$ fairly easily; minimal examples found are
\begin{center}
\includegraphics[width=3cm]{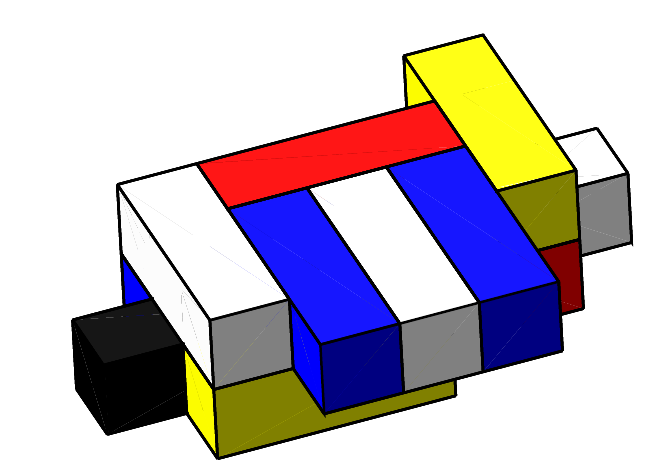}\qquad \includegraphics[width=3.5cm]{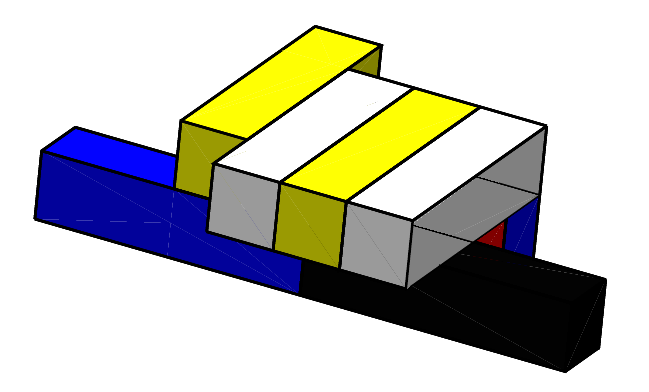}
\end{center}
with 11 and 8 cuboids respectively (\ref{311-2} and \ref{411-2}), or in exploded view
\begin{center}
\includegraphics[width=3cm]{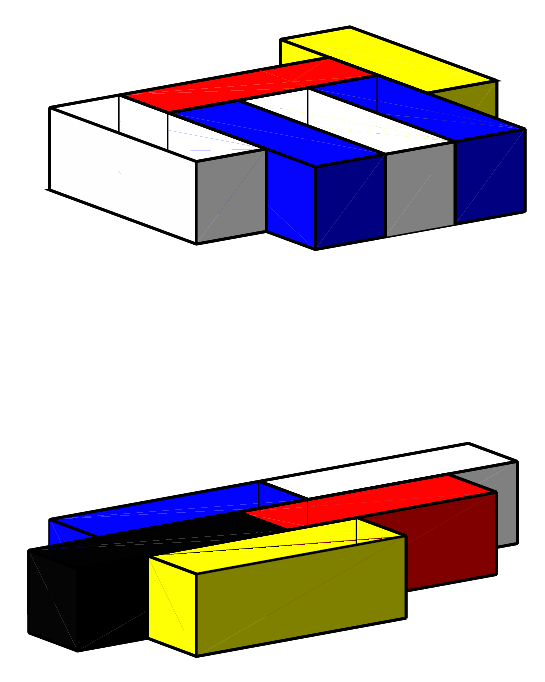}\qquad \includegraphics[width=3.5cm]{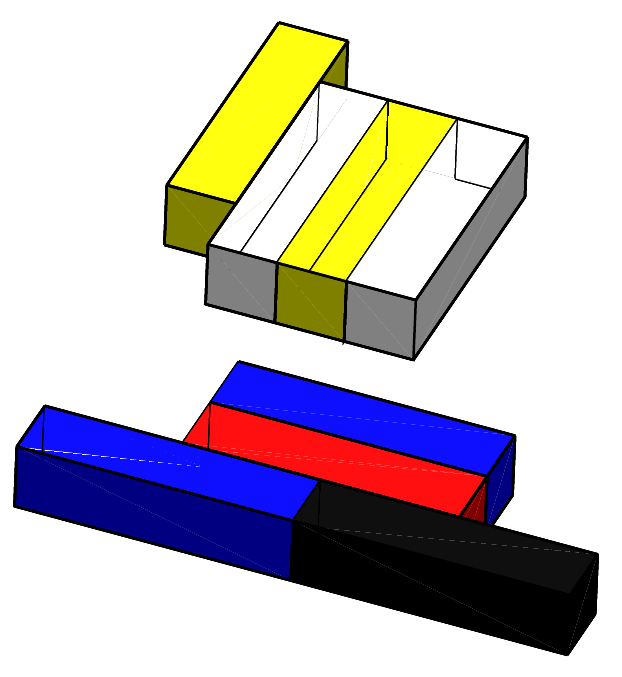}
\end{center}

Getting to five colors with $2\times 1\times 1$ is significantly harder, but examples have been found. The smallest one known to us is
\begin{center}
\includegraphics[width=6cm]{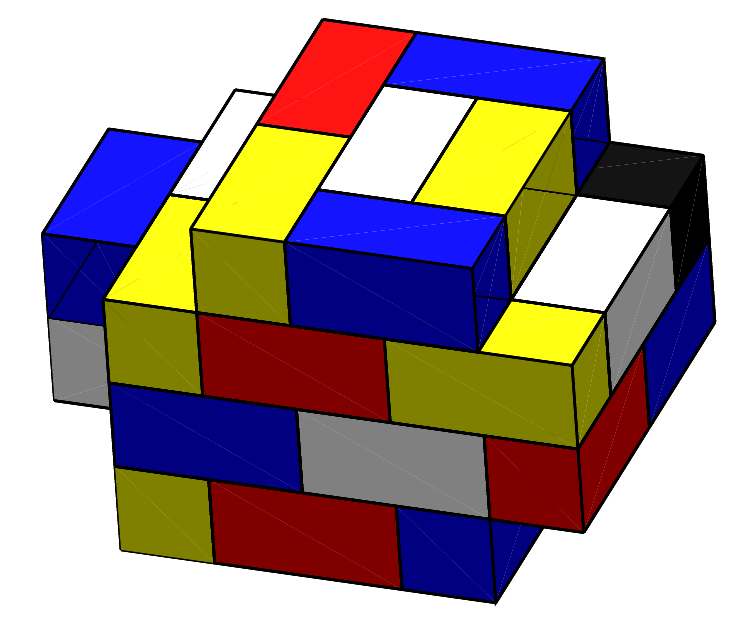}
\end{center}
with 36 cuboids (\ref{211}),  or in exploded view
\begin{center}
\includegraphics[width=5cm]{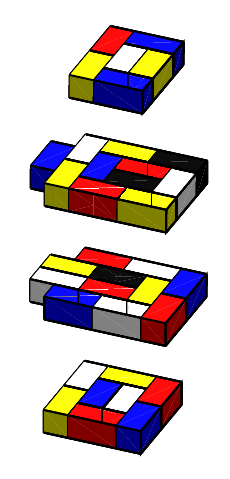}
\end{center}
Finally, we have examples with $4\times 2\times 1$ requiring six colors. The smallest one known to us is
\begin{center}
\includegraphics[width=6cm]{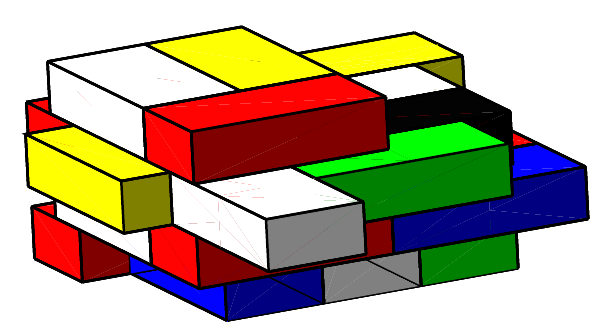}
\end{center}
with 23 cuboids (see \ref{421}),  or in exploded view
\begin{center}
\includegraphics[width=6cm]{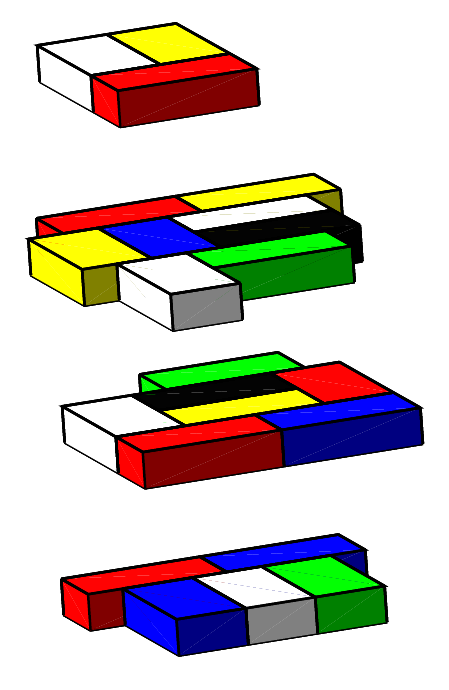}
\end{center}
\end{proof}

\begin{remar}\label{byhand}
We recall again that all of these configurations are critical in the sense that if any one cuboid is removed, the chromatic number goes down by one. This does not rule out the existence of examples having fewer cuboids.

The first examples of buildings with $4\times 2\times1$ (corresponding to the most iconic dimensions of a LEGO brick) requiring 6 colors were found in 2012 by Nicolas Bru Frantzen,
Mikkel B\o{}hlers Nielsen and 
Alex Voigt Hansen as students in the course ``Experimental Mathematics'' at University of Copenhagen. An example which grew out of their work, while not minimal. is worth pointing out as the lack of 5-colorability can rather easily be seen by symmetry. One notes that the configuration 
\begin{center}
\includegraphics[width=6cm]{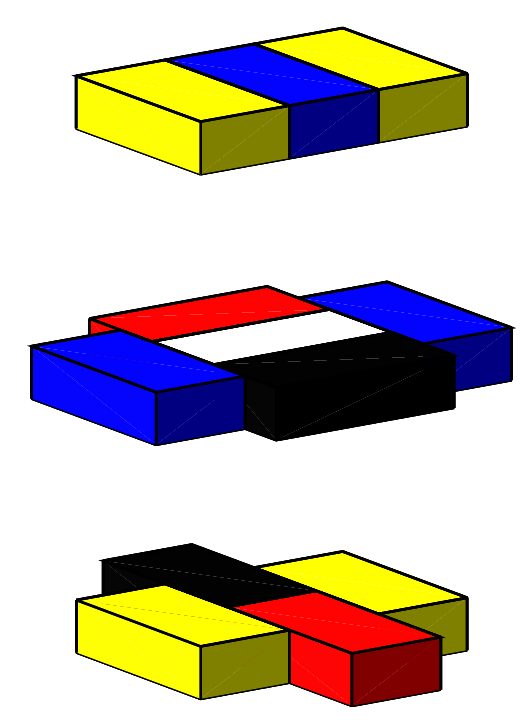}
\end{center}
can be 5-colored, but only when the lowest level has a repeated color as indicated. The 24-cuboid configuration 
\begin{center}
\includegraphics[width=6cm]{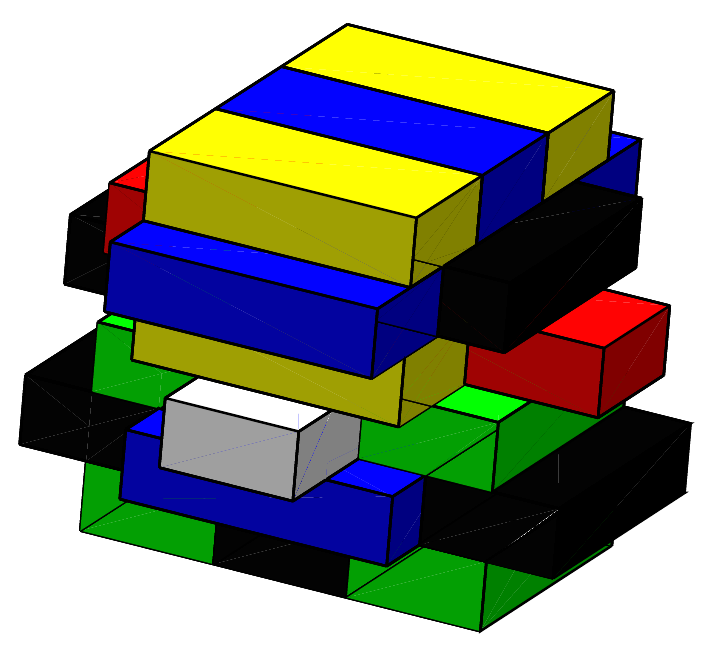}
\end{center}
obtained by adding another copy (mirrored in the $Z$ plane and rotated 90$^\circ$ in the $XY$ plane)  will each have 3 colors in the critical layer, and they must all differ.
\end{remar}

We now present upper bounds.

\begin{theor}\label{boundii}\mbox{}
\begin{enumerate}[(a)]
\item $\chi_2([2,1,1])\leq \perco{2}{2}{1}{1}{10}{10}{2}=5$.
\item For all $a,b$, we have $\chi_2([a,b,1])\leq 8$.
\item For all $a,b,c$, with both $a$ and $b$ odd, we have $\chi_2([a,b,c])\leq 8$.
\item For all $a,b,c$, we have $\chi_2([a,b,c])\leq 16$.
\end{enumerate}
\end{theor}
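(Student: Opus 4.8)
The plan is to prove Theorem~\ref{boundii} in four parts, handling the statements (a)--(d) in an order that lets the later, more general bounds build on or parallel the earlier ones. For part (a), the claim $\chi_2([2,1,1])\le\perco{2}{2}{1}{1}{10}{10}{2}=5$ follows the same strategy as the $\chi_1$ colorings in Theorem~\ref{boundi}: I would exhibit an explicit periodic inherited coloring with period $10\times 10\times 2$, now depending on both the root and the orientation $\pi\in\{\id,\tau\}$ since $\bullet=2$. The diagram would show, for the color $1$, the regions potentially occupied by a $2\times 1\times 1$ block in either orientation, and one checks both that the shaded regions never touch and that repeated colors sit close enough to collide. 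The lower bound $\chi_2([2,1,1])\ge 5$ is already supplied by Proposition~\ref{lowerboundsii}, so the two together pin the value at exactly $5$. I would verify that five colors genuinely suffice when both orientations are present, which is the only real novelty relative to the $\chi_1$ case.

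For part (b), I would invoke the layered partition argument already sketched in Section~\ref{partitions}: because $c=1$, every configuration in $\CG_2([a,b,1])$ splits into layers indexed by the $z$-coordinate of the root, and cuboids in layers of the same parity never touch (their $z$-extents are disjoint intervals separated by at least one unit). Within a single layer, all cuboids lie in a fixed thickened plane, and their contact graph is planar, so the four color theorem gives a proper $4$-coloring. Using one palette of four colors on even layers and a disjoint palette of four colors on odd layers yields a proper $8$-coloring globally, giving $\chi_2([a,b,1])\le 8$. The key point, as noted earlier in the partition subsection, is that the orientation freedom $\bullet=2$ does not disrupt planarity of a single layer, since all cuboids still have unit height and touching within a layer is exactly contact of axis-parallel rectangles in the plane.

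For part (d), the general bound $\chi_2([a,b,c])\le 16$, I would combine a parity partition in the $z$-direction with a reduction to $\chi_1$. The natural route is to mimic part (b) but without the planarity dividend: partition $\CG_2([a,b,c])$ by the parity of $\lfloor z/c\rfloor$ (or simply of $z$ if $c$ forces separation) so that same-parity layers decouple, reducing the coloring of the whole configuration to coloring a single ``slab'' whose cuboids all have fixed $z$-root modulo the period. Alternatively, I would apply Lemma~\ref{partitioning}(d), which states $\chi_2([a,b,c])\le 2\chi_1([a,b,c])$, together with the global bound $\chi_1([a,b,c])\le 8$ from Theorem~\ref{boundi}(h); this immediately yields $\chi_2([a,b,c])\le 16$ with no further work. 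This second approach is cleaner and I would prefer it, since it cites results already in hand.

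The main obstacle I anticipate is part (c): $\chi_2([a,b,c])\le 8$ when both $a$ and $b$ are odd. Here the improvement from $16$ to $8$ must come from a genuine structural feature of odd side lengths, presumably allowing a single palette of eight colors to be reused across the two orientations $\id$ and $\tau$ rather than doubling. My plan is to produce an explicit periodic coloring in the spirit of Theorem~\ref{boundi}(h), coloring by octant of a $2a\times 2b\times 2c$ region via a sign-vector $\tau(x,y,z)\in\{0,1\}^3$, and then argue that the oddness of $a$ and $b$ guarantees that a cuboid and its $\tau$-rotated counterpart rooted at a touching position always land in octants differing in at least one coordinate, so that the same eight colors handle both orientations. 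The delicate verification is that when two blocks touch via one of \eqref{touchx}--\eqref{touchz} while carrying \emph{different} orientations, the offset in the contact direction (an odd number $a$ or $b$) still flips the relevant entry of $\tau$ modulo $2a$ or $2b$; the parity hypothesis on $a$ and $b$ is exactly what makes this work, and I expect pinning down the mixed-orientation contact cases to be the crux of the argument.
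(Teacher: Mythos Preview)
Your proposals for (a), (b), and (d) match the paper's approach: an explicit periodic $5$-coloring for (a), the four-color theorem on alternating $z$-layers for (b), and Lemma~\ref{partitioning}(d) combined with Theorem~\ref{boundi}(h) for (d). For (a) the paper organizes the coloring via the observation that every $2\times1\times1$ cuboid meets exactly one unit cube with even coordinate sum, but that is an implementation detail rather than a different idea.

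There is a genuine gap in your plan for (c). You propose the octant coloring of Theorem~\ref{boundi}(h), namely $\tau(x,y,z)=\bigl((x\bmod 2a)/a,\,(y\bmod 2b)/b,\,(z\bmod 2c)/c\bigr)$, and hope that an odd offset in the contact direction flips the relevant entry. But when two cuboids of \emph{different} orientations touch via \eqref{touchx}, the offset $x-x'$ can equal $\pm b$ rather than $\pm a$, and a shift by $b$ need not change $(x\bmod 2a)/a$: take $a=3$, $b=5$, $x=0$, $x'=-5\equiv 1\pmod 6$, so both first entries are $0$. The same failure occurs even for two cuboids both in the $\tau$-orientation, where the $x$-offset is $\pm b$. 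So the $2a\times 2b\times 2c$ octant coloring simply does not separate touching pairs once the second orientation is present.

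The paper's fix is to abandon the octant scale in the $x$ and $y$ directions and use the much coarser coloring
\[
\kappa(x,y,z)=\bigl(x\bmod 2,\ y\bmod 2,\ (z\bmod 2c)/c\bigr)\in\{0,1\}^3.
\]
Now a contact via \eqref{touchx} forces $x-x'\in\{\pm a,\pm b\}$, all of which are odd by hypothesis, so $x\bmod 2$ changes regardless of which orientation either cuboid carries; symmetrically for \eqref{touchy}. Contacts via \eqref{touchz} give $z-z'=\pm c$ since both orientations have height $c$, and the third entry flips exactly as in Theorem~\ref{boundi}(h). Your intuition that oddness is the key is correct, but it is exploited through $x\bmod 2$ rather than through the $2a$-periodic octant.
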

\begin{proof}
To show that there is a periodic coloring for $2\times1\times1$, we note that any cuboid  $C \in \myC_2([2,1,1])$ intersects exactly one $1\times 1\times1$ cube rooted at $(x,y,z)$ with $x+y+z$ even. Thus, we can specify a coloring by assigning  colors to those cubes. Formally, the coloring is then defined as
\[
\kappa(x,y,z,\pi)=\begin{cases}\kappa_0(x,y,z)&x+y+z\equiv 0\mod 2\\\kappa_0(x+1,y,z)&x+y+z \equiv 1\mod 2,\ \pi=\id\\\kappa_0(x,y+1,z)&x+y+z \equiv 1\mod 2,\ \pi=\tau
\end{cases}
\]
with $\kappa_0$ defined only on triples with even sum.

The coloring is given by employing two levels
\begin{center}
\begin{tabular}{|c|c|c|c|c|c|c|c|c|c|}\hline
\nsp 1&\nsp{} & 2&\nsp{}&3&&4&\nsp{}&5&\nsp{}\\\hline
\nsp{} &3&&4&\nsp{}&5&\nsp{}&\nsp 1&\nsp{}&2\\\hline
4&\nsp{}&5&\nsp{}&\nsp 1&\nsp{}&2&\nsp{}&3&\\\hline
\nsp{}&\nsp 1&\nsp{}&2&\nsp{}&3&&4&\nsp{}&5\\\hline
2&\nsp{}&3&&4&\nsp{}&5&\nsp{}&\nsp 1&\nsp{}\\\hline
&4&\nsp{}&5&\nsp{}&\nsp 1&\nsp{}&2&\nsp{}&3\\\hline
5&\nsp{}&\nsp 1&\nsp{}&2&\nsp{}&3&&4&\nsp{}\\\hline
\nsp{}&2&\nsp{}&3&&4&\nsp{}&5&\nsp{}&\nsp 1\\\hline
 3&&4&\nsp{}&5&\nsp{}&\nsp1&\nsp{}&2&\nsp{}\\\hline
\nsp{}&5&\nsp{}&\nsp1&\nsp{}&2&\nsp{}&3&&4\\\hline
\end{tabular}
\end{center}
and
\begin{center}
\begin{tabular}{|c|c|c|c|c|c|c|c|c|c|}\hline
&4&\nsp{}&5&\nsp{}&\nsp 1&\nsp{} & 2&\nsp{}&3\\\hline
5&\nsp{}&\nsp 1&\nsp{}&2&\nsp{} &3&&4&\nsp{}\\\hline
\nsp{}&2&\nsp{}&3&&4&\nsp{}&5&\nsp{}&\nsp 1\\\hline
3&&4&\nsp{}&5&\nsp{}&\nsp 1&\nsp{}&2&\nsp{}\\\hline
\nsp{}&5&\nsp{}&\nsp 1&\nsp{}&2&\nsp{}&3&&4\\\hline
\nsp 1&\nsp{}&2&\nsp{}&3&&4&\nsp{}&5&\nsp{}\\\hline
\nsp{}&3&&4&\nsp{}&5&\nsp{}&\nsp 1&\nsp{}&2\\\hline
4&\nsp{}&5&\nsp{}&\nsp 1&\nsp{}&2&\nsp{}&3&\\\hline
\nsp{}&\nsp1&\nsp{}&2&\nsp{}& 3&&4&\nsp{}&5\\\hline
2&\nsp{}&3&&4&\nsp{}&5&\nsp{}&\nsp1&\nsp{}\\\hline
\end{tabular}
\end{center}
We get (b) by applying the four color theorem in even and odd vertical layers separately as already noted in Section \ref{partitions}, and (d) follows from combining Theorem \ref{boundi}(h) with Lemma \ref{partitioning}(d)
For (c), we argue along the lines of Theorem \ref{boundi}(h), and  color with an entry in $\{0,1\}^3\simeq \{1,\dots,8\}$ defined as
\[
\kappa(x,y,z)=\left(x \!\!\mod 2,y \!\!\mod 2,(z \!\!\mod 2c)/c\right).
\]
If two cuboids rooted at $(x,y,z)$ and $(x',y',z')$ touched via \eqref{touchx}, we would have
\[
x-x'\in\{\pm a ,\pm b\}
\]
showing that the colors differ. The same argument applies if    \eqref{touchy} holds, and in the case 
or \eqref{touchz} we argue just as in  Theorem \ref{boundi}(h).
\end{proof}

\begin{remar}
Ivana Atanasovska, Thomas Barnholdt, Niklas Hjuler and
Mikkel Strunge found the upper bound for $2\times 1\times 1$ as students in the course ``Experimental mathematics''  in 2014.
\end{remar}

\begin{corol}
\[
\chi_2([2,1,1])=5.
\]
\end{corol}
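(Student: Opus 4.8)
The plan is to prove the equality $\chi_2([2,1,1])=5$ by combining the two bounds that have just been established. The final corollary is a sandwich statement, so the entire strategy reduces to confronting a lower bound of $5$ with an upper bound of $5$.

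For the inequality $\chi_2([2,1,1])\geq 5$, I would invoke Proposition \ref{lowerboundsii}, whose first line asserts exactly that $\chi_2([a,1,1])\geq 5$ for all $a\geq 2$, specializing at $a=2$. That proposition was proved by exhibiting a concrete critical configuration of $36$ cuboids of dimension $2\times 1\times 1$ requiring five colors, so there is nothing further to verify here; the lower bound is simply inherited.

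For the reverse inequality $\chi_2([2,1,1])\leq 5$, I would cite Theorem \ref{boundii}(a), which states $\chi_2([2,1,1])\leq\perco{2}{2}{1}{1}{10}{10}{2}=5$. The content of that part of the theorem is the explicit periodic coloring with period $10\times 10\times 2$, given by the reduction $\kappa$ to $1\times 1\times 1$ cubes of even coordinate-sum together with the two $10\times 10$ level diagrams displayed there; since a periodic coloring of $\RR^3$ is inherited by every element of $\CG_2([2,1,1])$ as explained in Section \ref{periodic}, this caps the chromatic number at five.

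Combining the two displayed inequalities yields $5\leq\chi_2([2,1,1])\leq 5$, hence equality. The only genuine mathematical work lives inside the two cited results, and I expect no obstacle at the level of the corollary itself: it is a one-line consequence. If I were to highlight where the real difficulty sits, it is in the already-completed verification that the $10\times 10\times 2$ periodic pattern is proper (the shaded-region touching checks) and in confirming that the $36$-cuboid configuration is not $4$-colorable; but both of these are assumed from the preceding theorem and proposition, so the corollary requires only their juxtaposition.
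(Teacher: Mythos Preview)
Your proposal is correct and matches the paper's approach exactly: the corollary is stated without proof because it is the immediate combination of the lower bound from Proposition \ref{lowerboundsii} (specialized at $a=2$) with the upper bound from Theorem \ref{boundii}(a). There is nothing to add.
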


\begin{corol}
\begin{eqnarray*}
\max_{a}\chi_2([a,1,1])&\in&\{6,7,8\}\\
\max_{a,b}\chi_2([a,b,1])&\in&\{6,7,8\}\\
\max_{a,b,c}\chi_2([a,b,c])&\in&\{6,7,\dots,16\}
\end{eqnarray*}
and for small cuboids, we have
\begin{center}
\begin{tabular}{|c||c|c|c|c|c|}\hline
$a$&$1$&$2$&$3$&$4$&$\geq 5$\\\hline\hline
$\chi_2([a,1,1])$&$2$&$5$&$5,6,7,8$&$5,6,7,8$&$6,7,8$\\\hline
$\chi_2([a,2,1])$&&$5$&$5,6,7,8$&$6,7,8$&$6,7,8$\\\hline
$\chi_2([a,3,1])$&&&$5,6,7$&$6,7,8$&$6,7,8$\\\hline
$\chi_2([a,4,1])$&&&&$6,7$&$6,7,8$\\\hline
$\chi_2([a,5,1])$&&&&&$6,7,8$\\\hline
\end{tabular}
\end{center}
\end{corol}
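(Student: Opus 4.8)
The plan is to assemble this corollary entirely from results already in hand, so no new construction is required. The three upper bounds that drive everything are Theorem \ref{boundii}(b), giving $\chi_2([a,b,1])\leq 8$ for all $a,b$, and Theorem \ref{boundii}(d), giving $\chi_2([a,b,c])\leq 16$ for all $a,b,c$. For lower bounds I would rely on two elementary observations already invoked in the text: since any $\CG_1$ configuration is also a $\CG_2$ configuration we have $\chi_2([a,b,c])\geq\chi_1([a,b,c])$; and when $a=b$ the two orientations in the definition of $\myC_2$ coincide, so $\myC_2([a,a,c])=\myC_1([a,a,c])$ and hence $\chi_2([a,a,c])=\chi_1([a,a,c])$ \emph{exactly}. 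Combining these with the monotonicity of $\chi_1$ from Lemma \ref{increases}, the explicit $\chi_1$ values tabulated in the previous section, Proposition \ref{lowerboundsii}, and the earlier corollary $\chi_2([2,1,1])=5$ supplies every entry.

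For the three displayed maxima I would argue uniformly. In each case the upper bound is global: $\chi_2([a,1,1])\leq 8$ and $\chi_2([a,b,1])\leq 8$ by Theorem \ref{boundii}(b), and $\chi_2([a,b,c])\leq 16$ by Theorem \ref{boundii}(d). For the matching lower bounds I would exhibit a single witness in each regime: $\chi_2([5,1,1])\geq 6$ from Proposition \ref{lowerboundsii} handles the first two maxima (the second since $[5,1,1]$ is an admissible $[a,b,1]$), while $\chi_2([2,2,2])\geq\chi_1([2,2,2])\geq 6$ from Proposition \ref{lowerbounds} handles the third. Thus each maximum lies in the stated interval.

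For the table I would split the entries into the diagonal cases $a=b$ and the strictly off-diagonal cases (the blanks being determined by the symmetry $\chi_2([a,b,1])=\chi_2([b,a,1])$). The diagonal entries are read off directly from $\chi_2([a,a,1])=\chi_1([a,a,1])$ together with the $\chi_1$ table: this is exactly what makes $\chi_2([3,3,1])\in\{5,6,7\}$ and $\chi_2([4,4,1])\in\{6,7\}$ sharper than the generic upper bound $8$, the improved upper bounds of $7$ coming from Theorem \ref{boundi}(f),(g). The entries $\chi_2([1,1,1])=\chi_1([1,1,1])=2$ and $\chi_2([2,2,1])=\chi_1([2,2,1])=5$ are of the same type. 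For the off-diagonal entries the upper bound is always $8$ (Theorem \ref{boundii}(b)), and the lower bound is obtained by chaining $\chi_2([a,b,1])\geq\chi_1([a,b,1])\geq\chi_1([a_0,b_0,1])$ through Lemma \ref{increases} to a base case whose $\chi_1$-value is known: for instance $\chi_2([3,2,1])\geq\chi_1([2,2,1])=5$, $\chi_2([4,2,1])\geq 6$ directly from Proposition \ref{lowerboundsii}, and $\chi_2([4,3,1])\geq\chi_1([4,3,1])\geq 6$, with the $a\geq 5$ columns following by the same monotonicity from the $[5,2,1]$ and $[4,3,1]$ witnesses. The single already-solved entry $\chi_2([2,1,1])=5$ is quoted from the preceding corollary.

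There is no genuine obstacle here; the work is bookkeeping. The one point requiring care, which I would flag explicitly, is the distinction between the two kinds of monotonicity available: Lemma \ref{increases} gives monotonicity of $\chi_1$ and $\chi_2\geq\chi_1$ is immediate, but monotonicity of $\chi_2$ itself has \emph{not} been established, so every lower bound must be routed through $\chi_1$ rather than through a smaller $\chi_2$-value. The second point to verify is that the collapse $\chi_2([a,a,c])=\chi_1([a,a,c])$ genuinely imports the sharper $\chi_1$ upper bounds onto the diagonal, which is precisely why two of the table cells exclude the value $8$.
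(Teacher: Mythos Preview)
Your proposal is correct and follows exactly the route the paper intends: the corollary is stated without proof precisely because it is bookkeeping from Theorem~\ref{boundii}, Proposition~\ref{lowerboundsii}, the preceding corollary $\chi_2([2,1,1])=5$, and the $\chi_1$ results via $\chi_2\geq\chi_1$ and Lemma~\ref{increases}. Your explicit isolation of the identity $\chi_2([a,a,c])=\chi_1([a,a,c])$ on the diagonal, and your caution that monotonicity of $\chi_2$ in $a,b$ is \emph{not} available (so all lower-bound chains must be routed through $\chi_1$), are exactly the two subtleties one needs to track, and you handle both correctly.
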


We end this section by a brief discussion of a few aspects of the case $\chi_2([a,b,c])$ with $c>1$. Note first that in this case, the lack of symmetry prevents us from always arranging that $a\geq b\geq c$, but we will always require $a\geq b$. Just as in Lemma \ref{increases}, we can show

\begin{lemma}\label{increacesaltchiii}
When $c\leq c'$ then $\CG_2([a,b,c])\subseteq \CG_2([a,b',c'])$.
\end{lemma}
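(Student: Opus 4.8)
The statement to prove is Lemma \ref{increacesaltchiii}: when $c \le c'$, then $\CG_2([a,b,c]) \subseteq \CG_2([a,b',c'])$. The plan is to mimic the proof of Lemma \ref{increases} as closely as possible, but restricting attention to a rescaling that only expands the $z$-direction from $c$ to $c'$ while leaving the $x$- and $y$-dimensions fixed at $a$ and $b$. This is exactly the regime where the $\CG_2$ setting is compatible with rescaling: because the rescaling does not touch the two coordinates that get permuted by $\tau \in S_2$, the orientation choice (whether a cuboid is $[x,x+a]\times[y,y+b]\times[z,z+c]$ or $[x,x+b]\times[y,y+a]\times[z,z+c]$) is preserved under the map, and no cuboid of one orientation gets mapped to a cuboid of another orientation. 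This is the key reason the lemma is stated only for growth in $c$, rather than in all three coordinates as in Lemma \ref{increases}.

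\textbf{Key steps.} First I would define the rescaling map on roots by
\[
(x,\,y,\,zc+t) \mapsto (x,\,y,\,zc'+t), \qquad 0 \le t < c,
\]
decomposing the $z$-coordinate of each root uniquely as $zc + t$ with $z \in \ZZ$ and remainder $t$, and leaving $x,y$ untouched. Each cuboid in a configuration $G \in \CG_2([a,b,c])$ carries an orientation $\pi \in \{\id,\tau\}$, and I would assign the image cuboid the same orientation, so that a cuboid of $x$-extent $a$ stays of $x$-extent $a$ and similarly for $b$. Then I would verify that this map preserves both non-collision and touching, by checking each of the three touching conditions \eqref{touchx}--\eqref{touchz}. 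The $x$- and $y$-conditions are completely unaffected since those coordinates and their associated side lengths are unchanged. For the $z$-conditions, I would reproduce verbatim the computation from Lemma \ref{increases}: a $z$-touch relation $z+c = z'$ forces $z' = z+1$ and $t' = t$, which lifts to $z+c' = z'$ in the rescaled coordinates; and a $z$-overlap inequality $(z-z'+1)c \ge t'-t$ passes to $(z-z'+1)c' \ge t'-t$ because the coefficient on $c$ is nonnegative and $c' \ge c$. Non-collision follows because the map is injective and, as in Lemma \ref{increases}, whenever two rescaled cuboids would collide in the $z$-direction their preimages already collided, contradicting that $G$ is a legal configuration.

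\textbf{Main obstacle.} The genuinely new point, compared with Lemma \ref{increases}, is the bookkeeping of orientations: I must make sure the rescaling respects the $S_2$-labelling so that the image is actually a legal element of $\myC_2([a,b,c'])$ with the correct congruent dimensions, and that touching/collision between two cuboids of \emph{different} orientations is also preserved. Since the expansion is confined to the $z$-axis and both orientations share the same $z$-extent $c \mapsto c'$, a mixed-orientation pair is handled by exactly the same three-condition check — the $x$- and $y$-inequalities only ever involve the (unchanged) side lengths $a$ and $b$ in whatever arrangement the two orientations dictate, and the $z$-inequality is the one uniform case treated above. So I expect no real obstacle beyond confirming that this orientation-compatibility holds uniformly across the four orientation-pairs; the arithmetic is identical to the proof of Lemma \ref{increases} and requires no new idea.
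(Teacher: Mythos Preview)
Your proposal is correct and follows exactly the approach the paper indicates: the paper provides no standalone proof of this lemma but simply says ``Just as in Lemma \ref{increases}, we can show'', and your plan is precisely to rerun that argument with the rescaling confined to the $z$-axis so that the $S_2$-orientation data on the $x,y$ coordinates is left untouched. Your observation that this is why the lemma is stated only for growth in $c$ is the right explanation (and you may also wish to note that the $b'$ appearing in the displayed inclusion is a typo for $b$).
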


\begin{propo}\label{nstdlowerboundsii}
When $c>1$, we have 
\[
\chi_2([a,b,c])\geq 6
\]
unless $a=b=1$.
\end{propo}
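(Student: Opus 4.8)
The plan is to reduce the statement to a handful of base cases by monotonicity, exploiting three facts: that $\chi_2 \geq \chi_1$ (since $\CG_1([a,b,c]) \subseteq \CG_2([a,b,c])$), that $\chi_1$ is monotone in all three dimensions by Lemma \ref{increases}, and that $\chi_2$ is monotone in the $c$-direction by Lemma \ref{increacesaltchiii}. Since we always take $a \geq b$ and assume $c \geq 2$, excluding $a=b=1$ leaves exactly two situations: $b \geq 2$ (hence $a \geq b \geq 2$) and $b = 1$ (hence $a \geq 2$).

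First I would dispatch the case $a \geq b \geq 2$, which is immediate. Here $[a,b,c] \geq [2,2,2]$ holds componentwise, so Lemma \ref{increases} gives $\chi_1([a,b,c]) \geq \chi_1([2,2,2])$, and Proposition \ref{lowerbounds} supplies $\chi_1([2,2,2]) \geq 6$. Combined with $\chi_2 \geq \chi_1$ this yields $\chi_2([a,b,c]) \geq 6$ at once.

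The case $b = 1$ is where the real work lies. For $a \geq 5$ I would invoke $\chi_2([a,1,1]) \geq 6$ from Proposition \ref{lowerboundsii} together with Lemma \ref{increacesaltchiii}, concluding $\chi_2([a,1,c]) \geq \chi_2([a,1,1]) \geq 6$ for every $c \geq 1$. This leaves $a \in \{2,3,4\}$ with $c \geq 2$, and by $c$-monotonicity it suffices to treat $c = 2$, i.e. to show $\chi_2([2,1,2]), \chi_2([3,1,2])$ and $\chi_2([4,1,2])$ are each at least $6$. I would establish these by exhibiting, for each of the three brick sizes, an explicit configuration whose contact graph admits no proper $5$-coloring, locating such configurations by the pseudorandom search of Section \ref{pseudo} and certifying non-$5$-colorability (by hand where the clique or layer structure allows, and otherwise with the SAT-based check). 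Lemma \ref{increacesaltchiii} then propagates each bound from $c=2$ to all $c \geq 2$.

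The main obstacle is precisely this last family of small cases. One cannot import them from the $\chi_1$ theory: for $b=1$ the relevant cuboids are far from cubes, so $\chi_1([a,1,c])$ is expected to sit well below $6$, and it is only the freedom to rotate in the $XY$-plane (passing to $\myC_2$) together with the extra vertical room afforded by $c \geq 2$ that forces a sixth color. Moreover, the rescaling argument of Lemma \ref{increases} does not directly extend to enlarging $a$ or $b$ within $\myC_2$, because the two admissible orientations would demand mutually incompatible rescalings of the $x$- and $y$-axes; hence there is no monotonicity in $a$ available to collapse $\{2,3,4\}$ to a single representative, and each value of $a$ must be treated on its own.
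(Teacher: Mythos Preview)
Your proposal is correct and follows essentially the same approach as the paper: reduce to $c=2$ via Lemma~\ref{increacesaltchiii}, dispatch $b\geq 2$ through $\chi_1([2,2,2])\geq 6$, and handle $b=1$, $a\in\{2,3,4\}$ by explicit configurations found by computer search. The only inessential difference is your treatment of $b=1$, $a\geq 5$: you invoke $\chi_2([a,1,1])\geq 6$ from Proposition~\ref{lowerboundsii} together with $c$-monotonicity, whereas the paper instead uses $\chi_2([a,1,2])\geq \chi_1([a,1,2])\geq \chi_1([5,1,2])\geq 6$ (identifying $\chi_1([5,1,2])$ with $\chi_1([5,2,1])$ by axis permutation).
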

\begin{proof}
We assume that $a\geq b$; by Lemma \ref{increacesaltchiii} we may assume that $c=2$. If then $b\geq 2$ we get
\[
\chi_2([a,b,c])\geq \chi_1([a,b,c])\geq \chi_1([2,2,2])\geq 6
\]
using Lemma \ref{increases} and Proposition \ref{lowerbounds}. If $b=1$ and $a\geq 5$ we argue similarly that 
\[
\chi_2([a,b,c])\geq \chi_1([a,b,c])\geq \chi_1([5,1,2])\geq 6
\]
so the remaining cases are $b=1$ and $a\in\{2,3,4\}$. Here 
we have found the examples needing 6 colors
\begin{center}
\includegraphics[width=7cm]{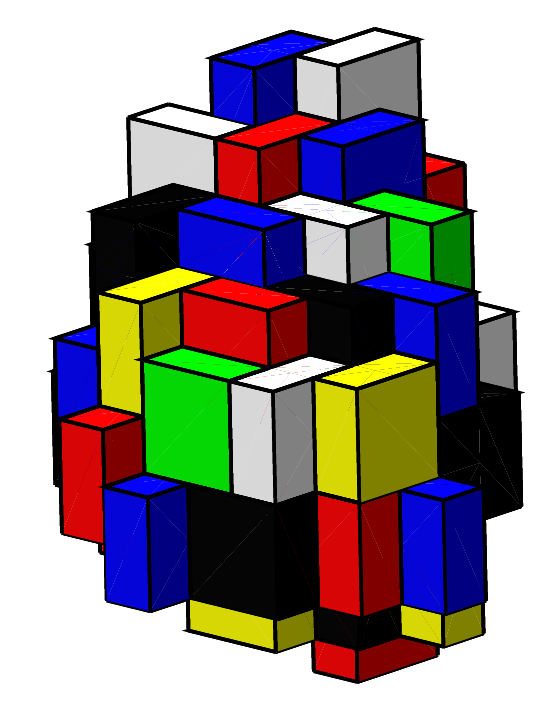}
\end{center}
with 61 $2\times 1\times 2$ cuboids (see \ref{212}), and 
\begin{center}
\includegraphics[width=4cm]{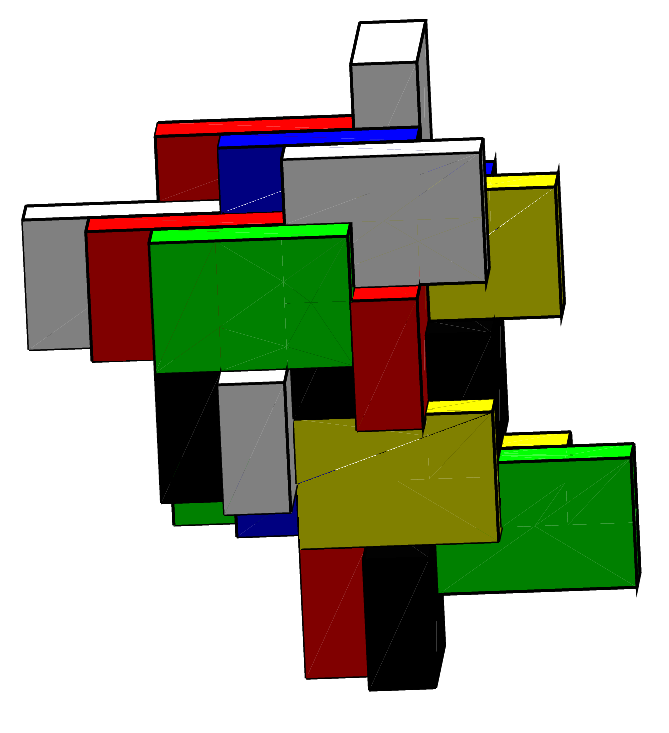}\qquad \includegraphics[width=4cm]{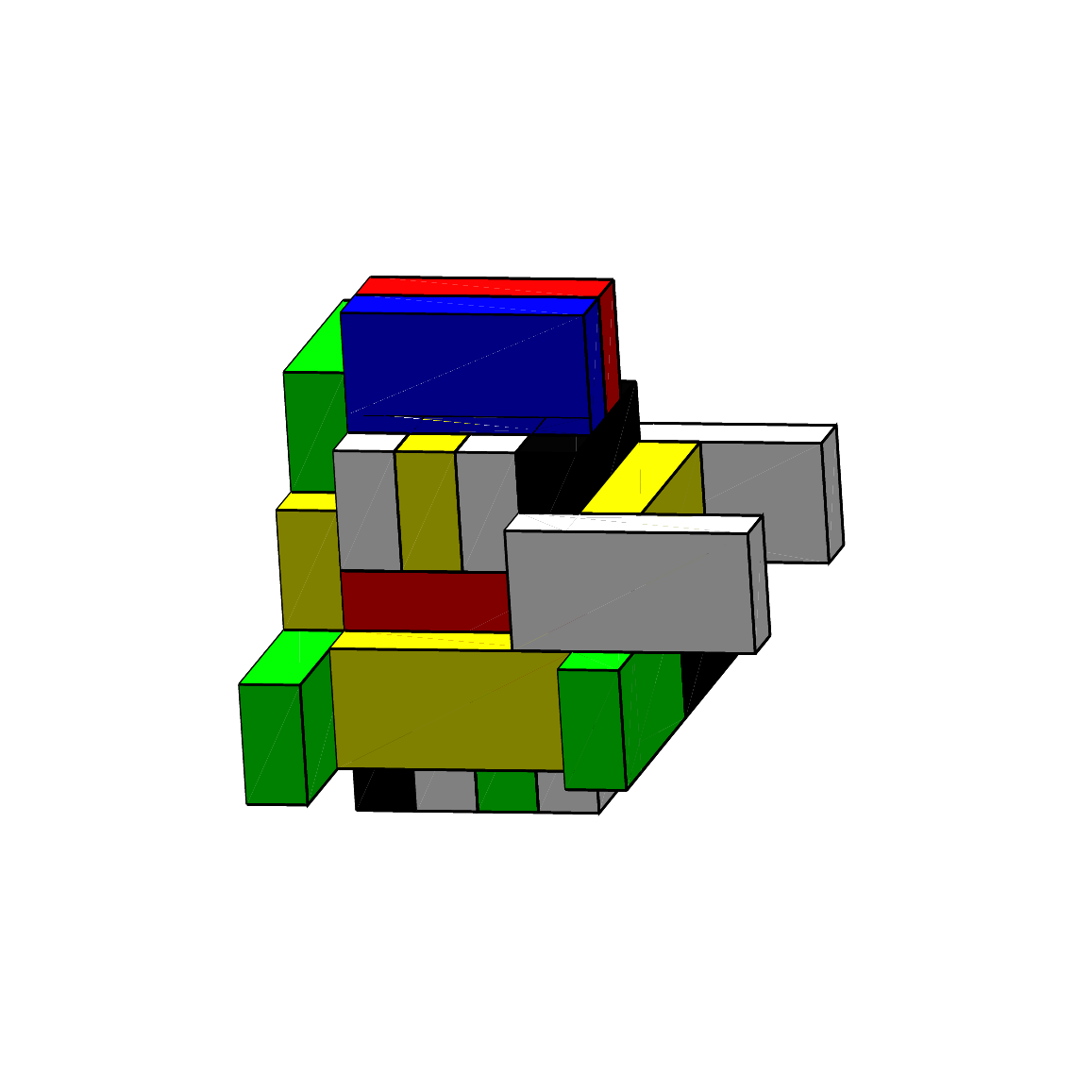}
\end{center}
with 25 and 26, respectively, cuboids of size $3\times 1\times 2$ and $4\times 1\times 2$ (see \ref{312} and \ref{412}).
\end{proof}

 We can improve the general upper bound $\chi_2([a,b,c])\leq 16$ for $c>1$ in many cases.

  \begin{lemma}
$ \nv_2([a,b,c])=\nv_2([a,b,2])$ for all $c>1$.
 \end{lemma}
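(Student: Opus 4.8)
We must show that the quantity $\nv_2([a,b,c])$ — the maximal number of non-colliding neighbors of a central cuboid that touch its lower band of height $1$ on its front and right, as defined in Section \ref{maxmin} — is independent of $c$ once $c>1$, stabilizing at its value for $c=2$. The plan is to recall that $\nv_2$ is computed as an independence number in the auxiliary graph whose vertices are the neighbors $C$ of a fixed center cuboid $c_0$ satisfying the non-touching condition \eqref{notouch}, with edges recording collisions. Since $\nv_2$ takes the maximum over the (finitely many) placements of $c_0$ among the orientations permitted in $\myC_2([a,b,c])$, it suffices to analyze each such placement and show that the relevant independence number does not change as $c$ increases past $2$.

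**The key geometric observation.** The central insight should be that, because we only count neighbors touching the \emph{lowest} band $\RR\times\RR\times(z,z+1)$ of the center cuboid, the third coordinate $z$ of any competing neighbor is effectively confined to a bounded range near the bottom face of $c_0$, and this range does not grow with $c$. First I would fix the center cuboid $c_0=[0,a']\times[0,b']\times[0,c']$ with $(a',b',c')$ a permutation of $(a,b,c)$ allowed by $\CG_2$, and argue that any neighbor contributing to $\nv_2$ must have a root $z$-coordinate satisfying a contact relation of the form \eqref{touchx}--\eqref{touchz} that pins $z$ into an interval whose length is governed by the \emph{minimum} of $c$ and the other dimensions, not by $c$ itself. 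The heart of the matter is that vertical overlap conditions like $z+c\geq z'\wedge z'+c'\geq z$ become slack once $c\geq 2$: increasing $c$ only extends the center cuboid upward, away from the lower band we are examining, so it can neither create new admissible neighbors touching the lower band nor destroy collisions among them.

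**Reduction to a height-$2$ window.** I would then set up an explicit bijection between the vertex sets of the auxiliary collision graphs for parameter $c$ and for parameter $2$, matching each neighbor by its root and orientation and checking that \eqref{notouch} and the touching conditions are simultaneously satisfied or violated in both cases. The collision (edge) relation must be shown invariant under this bijection: two neighbors collide for dimension $c$ iff they collide for dimension $2$. Since all the active constraints near the lower band involve only the bottom unit slab, the $z$-extent $c$ versus $2$ enters only through inequalities that are already satisfied with room to spare when $c\geq 2$, so collisions are determined entirely by the $x,y$ footprints and the bottom-slab $z$-positions. This yields a graph isomorphism, whence equal independence numbers. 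Taking the maximum over the finitely many orientations of $c_0$ — again matched bijectively across the two parameter values — gives $\nv_2([a,b,c])=\nv_2([a,b,2])$.

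**Anticipated obstacle.** The main difficulty, I expect, lies not in the vertical direction but in verifying that the orientation set genuinely matches: in $\myC_2$ the cuboids may be rotated in the $XY$ plane, so the permutations $(a',b',c')$ allowed for $c_0$ and for its neighbors must be handled uniformly, and one must be careful that rotating a $c$-tall cuboid cannot introduce a qualitatively new neighbor configuration when $c$ grows. In particular, when $c$ is placed as the first or second coordinate of some neighbor (rather than the vertical one), increasing $c$ \emph{does} change the horizontal footprint, and I would need to confirm that such horizontally-tall neighbors touching the lower band are still governed by the same collision combinatorics; the cleanest route is to verify that any dependence on the value of $c$ in a horizontal footprint enters only through inequalities of the form $(\,\cdot\,)c\geq s'-s$ that, as in the proof of Lemma \ref{increases}, are monotone in $c$ and hence preserved. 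Assembling these monotonicity checks into a single orientation-respecting isomorphism is the step requiring the most care.
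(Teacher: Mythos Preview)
Your proposal rests on a misreading of $\myC_2$. In $\myC_2([a,b,c])$ the only allowed orientations are $a\times b\times c$ and $b\times a\times c$; the third side is \emph{always} $c$, so every cuboid has height exactly $c$. Your ``anticipated obstacle'' --- that $c$ might be placed as a horizontal side of a neighbor --- therefore never occurs, and the monotonicity argument you sketch for it via Lemma~\ref{increases} is beside the point. More importantly, you never exploit the fact that center and neighbors share the same height, and that is precisely what drives the result.

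There is a second confusion: $\nv_\bullet$ does not count neighbors touching the lower band; it counts neighbors whose roots satisfy \eqref{notouch}, i.e.\ it \emph{excludes} the bottom face and part of the lower band on two sides. Consequently your claim that every admissible neighbor has its root $z$-coordinate ``confined to a bounded range near the bottom face'' is false: a neighbor sitting on top of $c_0$ has root $z=c$, which grows with $c$, and side neighbors may have root $z$ anywhere in $\{1-c,\dots,c-1\}$. So the ``height-$2$ window'' picture is wrong, and the bijection you propose (matching roots and orientations directly) will not be well-defined.

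The paper's argument is short and uses exactly the uniform-height fact you missed: since every neighbor has height $c$ just like $c_0$, it is either vertically aligned with $c_0$ or protrudes above or below. One can then slide each protruding neighbor vertically so that its overlap with $c_0$ is exactly $c/2$; this creates no new collisions among neighbors. After this normalization the picture is identical, up to vertical rescaling, to the $c=2$ case (aligned, protruding up by $1$, or protruding down by $1$), giving the same independence number.
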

 \begin{proof}
 Since all elements in $\myC_2([a,b,c])$ have the same height, any neighbor of a fixed cuboid is either vertically aligned with it, or protrudes upwards or downwards. Without creating collisions, we can hence adjust the vertical position of each neighbor so that  any protruding neighbor overlaps vertically with length $c/2$.
 \end{proof}

As explained in Section \ref{maxmin}, improved estimates can be obtained by computing all positions of neighboring cuboids and creating a collision  graph for them. We then compute $\nv_2([a,b,2])$ as the independence number of this graph:
  
\begin{center}
\begin{tabular}{c||c|c|c|c|c|c|c|c|c|c|c|c|c|c|c|}
&1&2&3&4&5&6&7&8&9&10&11&12&13&14&15\\\hline\hline
1&5 & \nsp{8} &{11}  & \nsp{14} & 17 & 20 & 23 & 26 & 29 & 32 & 35 & 38 & 41 & 44 & 47 
\\\hline
2& & 7 & \nsp{9} & \nsp{10}  & \nsp{12}   & \nsp{13}    & 15 & 16 & 18 & 19 & 20 & 22 & 23 & 25 & 26 
\\\hline 
3& & & 7 & \nsp{9} &{10}  & \nsp{10}  &{12}   & \nsp{13}   &{13} & 15 & 16 & 16 & 18 & 18 & 19 
 \\\hline 
4& & & & 7 & \nsp{9} & \nsp{10}  & \nsp{10}  & \nsp{10}  & \nsp{12}   &  \nsp{13} & \nsp{13} &  \nsp{13} & 15 & 16 & 16 
 \\\hline 
5& & & & & 7 & \nsp{9} &{10}  & \nsp{10}  &{10}  & \nsp{10}  & {12}   &  \nsp{13} &  {13} &  \nsp{13} &  {13} 
 \\\hline 
6& & & & & & 7 & \nsp{9} & \nsp{10}  & \nsp{10}  & \nsp{10}  & \nsp{10}  & \nsp{10}  & \nsp{12}   &  \nsp{13} &  \nsp{13} 
 \\\hline 
7& & & & & & & 7 & \nsp{9} &{10}  & \nsp{10}  & {10}  & \nsp{10}  & {10}  & \nsp{10}  & {12}   
 \\\hline 
 8&& & & & & & & 7 & \nsp{9} & \nsp{10}  & \nsp{10}  & \nsp{10}  & \nsp{10}  & \nsp{10}  & \nsp{10}  
 \\\hline 
9& & & & & & & & & 7 & \nsp{9} & {10}  & \nsp{10}  & {10}  & \nsp{10}  & {10}  
 \\\hline 
10& & & & & & & & & & 7 & \nsp{9} & \nsp{10}  & \nsp{10}  & \nsp{10}  & \nsp{10}  
 \\\hline 
 11&& & & & & & & & & & 7 & \nsp{9} & {10}  & \nsp{10}  & {10}  
 \\\hline 
12& & & & & & & & & & & & 7 & \nsp{9} & \nsp{10}  & \nsp{10}  
 \\\hline 
13& & & & & & & & & & & & & 7 & \nsp{9} & {10}  
 \\\hline 
14& & & & & & & & & & & & & & 7 & \nsp{9} 
 \\\hline 
15& & & & & & & & & & & & & & & 7 
 \\\hline 
\end{tabular}
\end{center}
The  entries in shaded cells are the ones improving upper bounds already presented.

\section{$\chi_3$}

We recall that we have shown that
\[
\chi_1([a,1,1])= 4<5\leq \chi_2([a,1,1])
\]
for all $a$, and when we exploit the asymmetry in our definition of $\chi_2$, we also get from Proposition \ref{nstdlowerboundsii} that 
\[
\chi_1([2,1,2])=5<6\leq \chi_2([2,1,2])
\]
while
\[
\chi_2([2,2,1])=5<6\leq \chi_2([2,1,2])\leq \chi_3([2,1,2])=\chi_3([2,2,1]).
\]
But in no instance $a > b > c$  can we rule out that $\chi_1([a,b,c])=\chi_2([a,b,c])=\chi_3([a,b,c])$, even though it would be very counterintuitive to us if this were the case.

These phenomena explain  that except for a few  dimensions, at least when one dimension is even, our best knowledge of  $\chi_3([a,b,c])$ is in fact derived from knowledge of  $\chi_2([a,b,c])$ or $\chi_1([a,b,c])$. Lower bounds are of course transferred directly.

\begin{theor}
We have
\[
\chi_3([a,b,c])\geq 6
\]
for all $(a,b,c)\not \in\{(1,1,1),(2,1,1),(3,1,1)\}$.
\end{theor}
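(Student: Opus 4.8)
The goal is to show $\chi_3([a,b,c])\geq 6$ for every triple outside the three small exceptional cases. Since $\chi_3([a,b,c])\geq \chi_2([a,b,c])\geq \chi_1([a,b,c])$ always holds (more freedom in rotation can only enlarge the class $\CG_\bullet$, hence raise the maximum chromatic number), the strategy is to reduce every non-exceptional triple to a situation where a $\geq 6$ lower bound for $\chi_1$ or $\chi_2$ is already available from Proposition \ref{lowerbounds} or Proposition \ref{nstdlowerboundsii}, then transport that bound upward. I would organize the argument by a case split according to how many of the entries $a,b,c$ equal $1$, assuming after relabeling (which is harmless for $\chi_3$, as all permutations are allowed) that $a\geq b\geq c$.

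\textbf{Main case split.} First, if at most one of the entries is $1$ --- equivalently $b\geq 2$, so $[a,b,c]\geq [2,2,1]$ with all entries at least as large coordinatewise --- I would push toward $[2,2,2]$ or $[5,2,1]$ or $[4,3,1]$. When $c\geq 2$ we have $[a,b,c]\geq[2,2,2]$, so Lemma \ref{increases} with $\chi_1([2,2,2])\geq 6$ gives $\chi_3\geq\chi_1\geq 6$. When $c=1$ and $b\geq 2$, I would use the available $\chi_1$ lower bounds $\chi_1([5,2,1])\geq 6$ and $\chi_1([4,3,1])\geq 6$, combined with monotonicity, to cover $a\geq 5$ (for $b=2$) and the $b\geq 3$ sub-range; the genuinely small leftover tiles such as $[3,2,1],[4,2,1],[3,3,1]$ are precisely where $\chi_1$ is not yet known to reach $6$, and there I would instead invoke $\chi_2$: Proposition \ref{lowerboundsii} gives $\chi_2([4,2,1])\geq 6$, and Lemma \ref{increases} (together with the observation $[3,2,1],[3,3,1]\leq[4,3,1]$ coordinatewise, whose $\chi_1$ bound is $6$) should close the remaining tiles. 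Second, if exactly two entries are $1$, i.e. $b=c=1$ and $a\geq 2$, the triple is $[a,1,1]$, and here the crucial input is Proposition \ref{lowerboundsii}: $\chi_2([a,1,1])\geq 6$ for all $a\geq 5$, while for $a\in\{2,3,4\}$ one leans on the $c>1$ examples of Proposition \ref{nstdlowerboundsii}, since $\chi_3([a,1,1])=\chi_3([a,1,2])/\text{(no --- rather)}$; more precisely, because rotation is free, $\chi_3([a,1,1])\geq \chi_2([a,1,2])\geq 6$ for $a\in\{2,3,4\}$ by Proposition \ref{nstdlowerboundsii}, noting the block $[a,1,2]$ is a rotation-equivalent presentation inside $\CG_3([a,1,1])$ only after rescaling, so the clean route is instead to apply the $\chi_2([a,1,1])\geq 5$ bounds and supplement the small cases $a\in\{2,3,4\}$ directly from the explicit $6$-color configurations exhibited in Proposition \ref{nstdlowerboundsii}, reinterpreted in the freely-rotating class.

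\textbf{The main obstacle.} The delicate point is exactly the boundary between ``at most one entry equals $1$'' with small remaining entries and the two-ones case $[a,1,1]$ for small $a$: these are the tiles where no $\chi_1$ bound of $6$ exists, so the argument must route through $\chi_2$ or through the $c=2$ examples. I expect the hard part to be verifying that each small leftover tile $[3,2,1],[4,2,1],[3,3,1]$ and each small $[a,1,1]$ with $a\in\{2,3,4\}$ is genuinely dominated (in the monotonicity order of Lemma \ref{increases}, applied within the correct rotation class) by a tile for which a $6$-lower bound has been established, and in checking that the three exceptional triples $(1,1,1),(2,1,1),(3,1,1)$ are precisely those that escape every such domination --- consistent with $\chi_1([1,1,1])=2$, $\chi_1([2,1,1])=\chi_2([2,1,1])$ capped below $6$, and $[3,1,1]$ sitting just under the threshold. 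Once the reduction chart is pinned down, each individual step is a direct appeal to monotonicity together with an already-established lower bound, so no new combinatorics is required beyond bookkeeping the cases.
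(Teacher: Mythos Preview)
Your overall structure is right --- partition into the cases ``at most one entry is $1$'' and $b=c=1$, and transport $\chi_1$ or $\chi_2$ lower bounds upward --- but two steps break.

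First, you invoke Lemma \ref{increases} in the wrong direction. That lemma says $\chi_1$ is \emph{increasing} in the dimensions: from $[3,2,1]\leq[4,3,1]$ you get $\chi_1([3,2,1])\leq\chi_1([4,3,1])$, not $\geq$. So the observation that $[3,2,1],[3,3,1]\leq[4,3,1]$ cannot be used to push the bound $\chi_1([4,3,1])\geq 6$ down to the smaller tiles. (You also omit $[2,2,1]$ from your list of leftovers.) The paper's route here is the one you half-gesture at but never execute cleanly: since $\chi_3$ is symmetric under permutation of the entries, one has $\chi_3([a,b,1])=\chi_3([a,1,b])\geq\chi_2([a,1,b])$, and Proposition \ref{nstdlowerboundsii} gives $\chi_2([a,1,b])\geq 6$ whenever $b\geq 2$. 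This disposes of all four leftover tiles $[2,2,1],[3,2,1],[3,3,1],[4,2,1]$ at once.

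Second, the case $[4,1,1]$ cannot be closed by any reduction of this type. The configurations in Proposition \ref{nstdlowerboundsii} are built of cuboids of shape $a\times 1\times 2$; these are not congruent to $4\times 1\times 1$ cuboids, so they do not lie in $\myC_3([4,1,1])$ and cannot be ``reinterpreted'' there. (Your aside ``$\chi_3([a,1,1])=\chi_3([a,1,2])$'' is simply false: the two cuboids have different multisets of side lengths.) The paper handles $[4,1,1]$ by a direct computer-found example of $56$ cuboids requiring six colors --- new input, not bookkeeping. For $a\in\{2,3\}$ no argument is needed, since those triples are in the exceptional set.
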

\begin{proof}
By Proposition \ref{lowerbounds} we have $\chi_3([a,b,c])\geq\chi_1([a,b,c])\geq  6$ except when $b=c=1$ or
\begin{equation}\label{opens}
(a,b,c)\in\{(2,2,1),(3,2,1),(3,3,1),(4,2,1)\}.
\end{equation}
In the case $b=c=1$ we get
\begin{eqnarray*}
\chi_3([a,1,1])&\geq& \chi_2([a,1,1])\ \geq\   6
\end{eqnarray*}
whenever $a\geq 5$ by Proposition \ref{lowerboundsii}, and for the cases in \eqref{opens} we can permute the entries to get
\[
\chi_3([a,b,1])=\chi_3([a,1,b])\geq \chi_2([a,1,b])\geq 6
\]
by Proposition \ref{nstdlowerboundsii}.

Finally, we have the example
\begin{center}
\includegraphics[width=6cm]{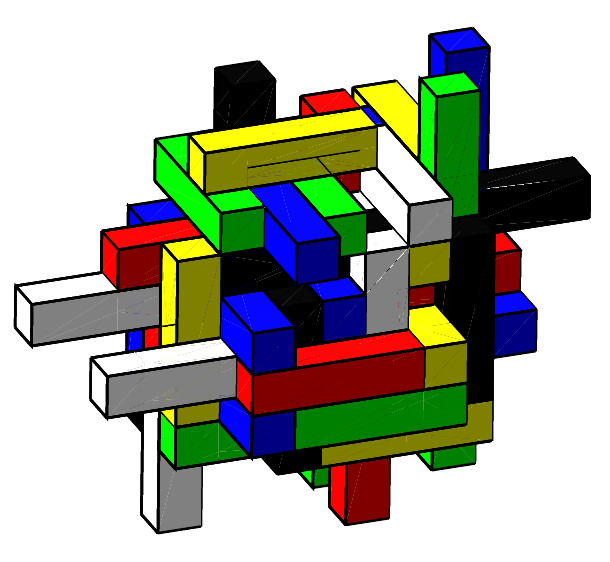}
\end{center}
with 56 cuboids (\ref{411-3}), showing that $\chi_3([4,1,1])\geq 6$.
\end{proof}

Our upper bounds at this level of freedom appear rather loose in most cases.

\begin{propo}\mbox{}
\begin{enumerate}[(a)]
\item $\chi_3([2,1,1])\leq  \perco{3}{2}{1}{1}{12}{12}{12}=6$.
\item $\chi_3([a,b,c])\leq 8$ when all $a,b,c$ are odd.
\item $\chi_3([a,1,1])\leq 12$ for all $a$.
\item $\chi_3([a,b,1])\leq 24$ for all $a,b$.
\item $\chi_3([a,b,c])\leq 24$ for all $a,b,c$ when $a=b$ or $b=c$.
\item $\chi_3([a,b,c])\leq 48$ for all $a,b,c$.
\end{enumerate}
\end{propo}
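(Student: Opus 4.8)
The plan is to read parts (c)--(f) off the partitioning estimates of Lemma~\ref{partitioning} together with the upper bounds on $\chi_1$ and $\chi_2$ already proved, to settle (b) by a direct parity coloring, and to regard (a) as the only part carrying genuinely new content. For (f), Lemma~\ref{partitioning}(a) gives $\chi_3([a,b,c])\leq 6\chi_1([a,b,c])$, and (after reordering the dimensions so that $a\geq b\geq c$) Theorem~\ref{boundi}(h) gives $\chi_1([a,b,c])\leq 8$, so $\chi_3([a,b,c])\leq 48$. For (e), the hypothesis $a=b$ or $b=c$ forces $|\{a,b,c\}|\leq 2$: when exactly two distinct values occur, Lemma~\ref{partitioning}(b) yields $\chi_3\leq 3\chi_1\leq 24$, while when all three coincide the cube is invariant under rotation, so $\CG_3([a,a,a])=\CG_1([a,a,a])$ and $\chi_3([a,a,a])=\chi_1([a,a,a])\leq 8$. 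For (d), Lemma~\ref{partitioning}(c) and Theorem~\ref{boundii}(b) give $\chi_3([a,b,1])\leq 3\chi_2([a,b,1])\leq 24$. For (c), $\{a,1,1\}$ has at most two distinct elements, so Lemma~\ref{partitioning}(b) and Theorem~\ref{boundi}(b)--(c) give $\chi_3([a,1,1])\leq 3\chi_1([a,1,1])\leq 12$, the cube $a=1$ being trivial.

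For (b), I would color every cuboid rooted at $(x,y,z)$---regardless of its orientation---by the triple $(x\bmod 2,\,y\bmod 2,\,z\bmod 2)\in\{0,1\}^3$, a palette of $8$ colors. If two cuboids touch via \eqref{touchx}, then their roots differ in the $x$-coordinate by exactly the $x$-extent of one of them, which is one of $a,b,c$ and hence odd, so the first entries of the two colors disagree. The cases \eqref{touchy} and \eqref{touchz} are identical in the second and third coordinates, so touching cuboids always get distinct colors and $\chi_3([a,b,c])\leq 8$. This is exactly the octant coloring of Theorem~\ref{boundi}(h) (and of Theorem~\ref{boundii}(c)) extended to all orientations, which works here precisely because all three dimensions are odd, so reduction modulo $2$---rather than modulo $2a$, $2b$, $2c$---already separates opposite faces.

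The remaining part (a) is the genuine obstacle, since the partitioning bounds only give $\chi_3([2,1,1])\leq 3\chi_1([2,1,1])=9$ and the improvement to $6$ needs an explicit periodic coloring. I would produce a map $\kappa\colon\ZZ^3\times S_3\to\{1,\dots,6\}$ periodic with period $12\times 12\times 12$ and verify properness by checking, for each of the three orientations of the $2\times1\times1$ cuboid, that every translate meeting one of \eqref{touchx}--\eqref{touchz} receives a different color; by periodicity this reduces to a finite check over roots in $[0,12)^3$. I expect this to be the hard step: unlike the layered colorings used for $\chi_1$, the three competing orientations interact and the pattern is irreducibly three-dimensional, so it is located by the search of Section~\ref{periodic} rather than guessed. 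To obtain the stated equality $\perco{3}{2}{1}{1}{12}{12}{12}=6$ I would additionally confirm that no periodic $5$-coloring of this period exists (a finite feasibility check); for the bound on $\chi_3$ itself only the existence of the $6$-coloring is needed, and once it is written out it can be certified by the same shading diagrams used earlier, so the final claim need not rest on computation.
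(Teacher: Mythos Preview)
Your treatment of (b)--(f) is correct and coincides with the paper's approach (modulo what appear to be reference typos in the paper, where it cites Lemma~\ref{partitioning}(b) for (d) and (f) when (c) and (a), respectively, are meant; your attributions are the right ones). Your handling of the degenerate cube cases in (c) and (e) is in fact slightly cleaner than the paper's.

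The gap is in (a). You correctly identify it as the substantive part, but you only outline a verification \emph{procedure} without producing the coloring or the structural idea behind it. The paper does supply both. The key reduction you are missing is this: every element of $\myC_3([2,1,1])$, regardless of orientation, covers exactly one unit cube rooted at a point $(x,y,z)$ with $x+y+z$ even. Hence a coloring need only be specified on the even sublattice, and the color of a cuboid is inherited from the unique such cube it contains. This halves the problem and, more importantly, makes the three orientations interact in a controlled way: a horizontally placed cuboid and a vertically placed one colored by the same cube only occupy adjacent layers, so the adjacency constraints between layers are weaker than within a layer. The paper then exhibits an explicit $12\times 12$ pattern on the even cells of one layer (with colors $1,\dots,6$ arranged so that same-colored cells share at most a point), and obtains successive layers by a knight-move shift (one down, two across), returning after twelve steps. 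Without this even-sublattice reduction and the concrete pattern, your proposal for (a) remains a plan rather than a proof; a brute search over all $6^{|[0,12)^3\times S_3|}$ maps, or even over the associated SAT instance, is not the intended argument and you would still need to display the witness.
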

\begin{proof}
As for $\chi_2([2,1,1])$ we note that any $2\times 1\times 1$ cuboid overlaps with exactly one $1\times1\times1$ cuboid with coordinates summing to an even number. We set out to color these periodically in a $12\times 12\times 12$ grid, where we must deal with the fact that a cuboid placed due to a number situated in level $\ell$ may now extend into level $\ell-1$ and $\ell+1$. In the $\chi_2$ case, it is allowed that two $1\times1\times 1$ cuboids which may obtain the same color in adjacent layers share a side, but when two cuboids are adjacent in the same layer, they may only share a point. The formal description of the coloring is
\[
\kappa(x,y,z,\pi)=\begin{cases}\kappa_0(x,y,z)&x+y+z\equiv 0\mod 2\\
\kappa_0(x+1,y,z)&x+y+z\equiv 1\mod 2,\ \pi(1)=1\\
\kappa_0(x,y+1,z)&x+y+z\equiv 1\mod 2,\ \pi(1)=2\\
\kappa_0(x,y,z+1)&x+y+z\equiv 1\mod 2,\ \pi(1)=3
\end{cases}
\]

 We start by
\begin{center}
\begin{tabular}{|c|c|c|c|c|c|c|c|c|c|c|c|}\hline
\nsp{}&3&\nspi{}&6&\nsp{}&3&\nspi{}&6&\nsp{}&3&\nspi{}&6\\\hline
2&\nspii{}&5&\nsp{}&2&\nspii{}&5&\nsp{}&2&\nspii{}&5&\nsp{}\\\hline
\nsp{}&4&\nsp{}&\nsp{1}&\nsp{}&4&\nsp{}&\nsp{1}&\nsp{}&4&\nsp{}&\nsp{1}\\\hline
3&\nspi{}&6&\nsp{}&3&\nspi{}&6&\nsp{}&3&\nspi{}&6&\nsp{}\\\hline
\nspii{}&5&\nsp{}&2&\nspii{}&5&\nsp{}&2&\nspii{}&5&\nsp{}&2\\\hline
4&\nsp{}&\nsp{1}&\nsp{}&4&\nsp{}&\nsp{1}&\nsp{}&4&\nsp{}&\nsp{1}&\nsp{}\\\hline
\nspi{}&6&\nsp{}&3&\nspi{}&6&\nsp{}&3&\nspi{}&6&\nsp{}&3\\\hline
5&\nsp{}&2&\nspii{}&5&\nsp{}&2&\nspii{}&5&\nsp{}&2&\nspii{}\\\hline
\nsp{}&\nsp{1}&\nsp{}&4&\nsp{}&\nsp{1}&\nsp{}&4&\nsp{}&\nsp{1}&\nsp{}&4\\\hline
6&\nsp{}&3&\nspi{}&6&\nsp{}&3&\nspi{}&6&\nsp{}&3&\nspi{}\\\hline
\nsp{}&2&\nspii{}&5&\nsp{}&2&\nspii{}&5&\nsp{}&2&\nspii{}&5\\\hline
\nsp{1}&\nsp{}&4&\nsp{}&\nsp{1}&\nsp{}&4&\nsp{}&\nsp{1}&\nsp{}&4&\nsp{}\\\hline
\end{tabular}
\end{center}
where the gray cells are those that can be occupied by a cuboid placed horizontally colored with color 1, and the other shaded cells are those that can be occupied by a cuboid  placed vertically and colored 1 by a specification in one of the neighboring layers. The next level is obtained by shifting one level down and two across (like a knight in chess) to obtain
\begin{center}
\begin{tabular}{|c|c|c|c|c|c|c|c|c|c|c|c|}\hline
5&\nspii{}&2&\nspi{}&5&\nspii{}&2&\nspi{}&5&\nspii{}&2&\nspi{}\\\hline
\nspii{}&\nspii{1}&\nspii{}&4&\nspii{}&\nspii{1}&\nspii{}&4&\nspii{}&\nspii{1}&\nspii{}&4\\\hline
6&\nspii{}&3&\nsp{}&6&\nspii{}&3&\nsp{}&6&\nspii{}&3&\nsp{}\\\hline
\nspii{}&2&\nspi{}&5&\nspii{}&2&\nspi{}&5&\nspii{}&2&\nspi{}&5\\\hline
\nspii{1}&\nspii{}&4&\nspii{}&\nspii{1}&\nspii{}&4&\nspii{}&\nspii{1}&\nspii{}&4&\nspii{}\\\hline
\nspii{}&3&\nsp{}&6&\nspii{}&3&\nsp{}&6&\nspii{}&3&\nsp{}&6\\\hline
2&\nspi{}&5&\nspii{}&2&\nspi{}&5&\nspii{}&2&\nspi{}&5&\nspii{}\\\hline
\nspii{}&4&\nspii{}&\nspii{1}&\nspii{}&4&\nspii{}&\nspii{1}&\nspii{}&4&\nspii{}&\nspii{1}\\\hline
3&\nsp{}&6&\nspii{}&3&\nsp{}&6&\nspii{}&3&\nsp{}&6&\nspii{}\\\hline
\nspi{}&5&\nspii{}&2&\nspi{}&5&\nspii{}&2&\nspi{}&5&\nspii{}&2\\\hline
4&\nspii{}&\nspii{1}&\nspii{}&4&\nspii{}&\nspii{1}&\nspii{}&4&\nspii{}&\nspii{1}&\nspii{}\\\hline
\nsp{}&6&\nspii{}&3&\nsp{}&6&\nspii{}&3&\nsp{}&6&\nspii{}&3\\\hline
\end{tabular}
\end{center}
where we now color the vertical positions magenta to match the coloring above. Note that the cuboids colored magenta in the first layer are exactly above ones in the second layer, and the cuboids colored grey in the second layer exactly below ones in the first layer. Shifting 10 more times completes the cycle.

For (b), we argue as in Proposition \ref{boundi}(h), and  color with an entry in $\{0,1\}^3$ defined as
\[
\kappa(x,y,z)=\left(x\!\! \mod 2,y\!\!\mod 2,z\!\!\mod 2\right)
\]
If two cuboids rooted at $(x,y,z)$ and $(x',y',z')$ touched via \eqref{touchx}, we would have
\[
x-x'\in\{\pm a,\pm b,\pm c\},
\]
showing that the first entry of $\tau$ would differ. Identical reasoning from  \eqref{touchy}
or \eqref{touchz} shows the claim.

We get (c) from  Proposition \ref{boundi}(c) with Lemma \ref{partitioning}(b), (d) from  Proposition \ref{boundii}(b) with with Lemma \ref{partitioning}(b), (e) from  Proposition \ref{boundi}(h) with Lemma \ref{partitioning}(b) and (f)  from  Proposition \ref{boundi}(h) with Lemma \ref{partitioning}(b).
\end{proof}

\begin{remar}
The coloring in (a) was discovered by Niklas Hjuler in 2018.
\end{remar}

The proof provided for (b) also shows that 8 colors suffice when all dimensions are odd, even when the cuboids are allowed to vary freely, so we note explicitly for possible later use:

\begin{corol}\label{allodd}
Any contact graph of an integral  cuboid configuration where all cuboids have \textbf{odd} sides can be 8-colored.
\end{corol}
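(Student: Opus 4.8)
The plan is to reuse the parity colouring from part (b) of the preceding proposition essentially verbatim, observing that its correctness never relied on the cuboids being congruent --- only on each cuboid having all odd side lengths. Concretely, to a cuboid $[x,x+a]\times[y,y+b]\times[z,z+c]$ I would assign the colour
\[
\kappa(x,y,z)=\left(x\!\!\mod 2,\ y\!\!\mod 2,\ z\!\!\mod 2\right)\in\{0,1\}^3,
\]
which uses exactly $8$ colours no matter how many distinct cuboid shapes occur in the configuration, and regardless of any rotations, since the colour depends only on the root.

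To verify propriety I would take two touching cuboids, with roots $(x,y,z)$ and $(x',y',z')$ and (possibly different) side lengths $(a,b,c)$ and $(a',b',c')$, all six of which are odd by hypothesis. Touching means one of \eqref{touchx}--\eqref{touchz} holds. If \eqref{touchx} holds, then either $x+a=x'$ or $x'+a'=x$, so $x-x'\in\{\pm a,\pm a'\}$; since both $a$ and $a'$ are odd, $x-x'$ is odd, whence $x\not\equiv x'\pmod 2$ and the first coordinate of $\kappa$ differs. The cases \eqref{touchy} and \eqref{touchz} are identical, acting on the second and third coordinates respectively. Thus touching cuboids always receive different colours, and $\kappa$ is a proper $8$-colouring.

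The only point to flag --- and it is the single place where this statement goes beyond part (b) --- is that the two cuboids meeting at a face are no longer required to be translates (or even rotations) of one another, so the offset between their roots in the touching direction is one of two possibly different side lengths rather than a single fixed one. This is harmless precisely because oddness is a property of each individual cuboid: whichever of $a$ or $a'$ appears in the touching equation is odd, so the parity argument is unaffected. I therefore expect no genuine obstacle; the content of the corollary is simply the observation that the $\{0,1\}^3$ colouring is robust against mixing cuboid shapes and orientations, as long as every shape has odd extent in all three directions.
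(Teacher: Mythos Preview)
Your argument is correct and is exactly the paper's approach: the paper simply remarks that the parity colouring $\kappa(x,y,z)=(x\bmod 2,\,y\bmod 2,\,z\bmod 2)$ used in part (b) works verbatim when the cuboids are allowed to vary freely, since the touching condition forces an odd offset in one coordinate regardless of which odd side length appears. Your write-up makes this explicit in the same way.
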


 The authors are grateful to St\'ephane Bessy and Jean-S\'ebastian Sereni (\cite{sbjss:pc}) for showing them that the argument of  \cite{bessygoncalvessereni} can be amended to show that there is a building with all cuboids layered and of size $1\times 1\times (2n+1)$, which requires eight colors.

As in the previous section, we can in some cases use $\nv_3$ to obtain better bounds. With $c=1$ and $a\geq b$, we have

\begin{center}
\begin{tabular}{c||c|c|c|c|c|c|c|c|}
&1&2&3&4&5&6&7&8\\\hline\hline
1&3 & 7 &  11 & 15 & 19 & 23 & 27 & 31 
\\\hline
2&& \nsp 11 & \nsp 16 & \nsp 21 & 26 & 31 & 34 & 39 
\\\hline
3&&& \nsp 19 & \nsp 23 & 29 & 35 & 40 & 45 
\\\hline
4&&&& 27 & 31 & 37 & 42 & 47 
\\\hline
5&&&&& 33 & 39 & 44 & 49 
\\\hline
6&&&&&& 41 & 47 & 52 
\\\hline
7&&&&&&& 47 & 53 
\\\hline
 8& &&&&&&& 55\\ \hline
\end{tabular}\end{center}

With $c=2$ and $a\geq b\geq 2$, we have

\begin{center}
\begin{tabular}{c||c|c|c|c|c|c|c|}
&2&3&4&5&6&7&8\\\hline\hline
2&7 & \nsp 10 & \nsp 13 & \nsp 16 &\nsp  19 &\nsp  21 &  24 
\\\hline
3&& \nsp 12 &\nsp  14 & \nsp 17 &\nsp  20 &\nsp  23 &\nsp  25 
\\\hline
4&&& \nsp 15 & \nsp 18 &\nsp  21 &\nsp  24 &\nsp  27 
\\\hline
5&&&&\nsp  18 &\nsp  21 &\nsp  24 &\nsp  26 
\\\hline
6&&&&&  23 &\nsp  25 &\nsp  28 
\\\hline
7&&&&&&  26 &\nsp  29 
\\\hline
8&&&&&&&30 \\ \hline
\end{tabular}\end{center}
which provides much more new information both because our general bounds are worse, but also because  $\nv_3$ grows more slowly due to  the cuboids being less eccentric.

With $c=3$ and $a\geq b\geq 3$, we have

\begin{center}
\begin{tabular}{c||c|c|c|c|c|c|}
&3&4&5&6&7&8\\\hline\hline
3&7 & \nsp 10 & 13 &\nsp  13 & 16 &\nsp  18 
\\\hline
4&& \nsp 12 & \nsp 14 & \nsp 14 & \nsp 17 & \nsp 20 
\\\hline
5&&&  14 &  \nsp 16 & 18 & \nsp 20 
\\\hline
6&&&& \nsp 15 & \nsp 17 & \nsp 20 
\\\hline
7&&&&& 18 & \nsp 21 
\\\hline
8&&&&&& \nsp 21 
\\\hline
\end{tabular}\end{center}

It is easy to see that $\nv_3([2,3,25])\geq 48$, providing an example of a dimension where we cannot rule out $\chi_3=48$. 

\begin{corol}
\begin{eqnarray*}
\max_{a}\chi_3([a,1,1])&\in&\{6,\dots,12\}\\
\max_{a,b}\chi_3([a,b,1])&\in&\{6,\dots,24\}\\
\max_{a,b,c}\chi_3([a,b,c])&\in&\{6,\dots,48\}
\end{eqnarray*}
and we have
\begin{center}
\begin{tabular}{|c||c|c|c|c|c|}\hline
$a$&$1$&$2$&$3$&$4$&$5$\\\hline\hline
$\chi_3([a,1,1])$&$2$&$5,6$&$5,6,7,8$&$6,\dots,12$&$6,7,8$\\\hline
$\chi_3([a,2,1])$&&$6,\dots,12$&$6,\dots,17$&$6,\dots,22$&$6,\dots,24$\\\hline
$\chi_3([a,3,1])$&&&$6,7,8$&$6,\dots,24$&$6,7,8$\\\hline
\end{tabular}
\end{center}
\end{corol}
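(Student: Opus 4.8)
The plan is to read this corollary off from the estimates already assembled in this section, since every entry is just the best of a handful of previously established lower and upper bounds. Two ambient facts are used throughout: the inclusions $\myC_1([a,b,c])\subseteq\myC_2([a,b,c])\subseteq\myC_3([a,b,c])$ give $\chi_1([a,b,c])\le\chi_2([a,b,c])\le\chi_3([a,b,c])$ entrywise, and the degeneracy inequality $\chi_3([a,b,c])\le\nv_3([a,b,c])+1$ lets one convert the tabulated values of $\nv_3$ into upper bounds. I would first dispose of the three displayed maxima. Each $\chi_3([a,b,c])$ is a positive integer bounded above by $48$ (part~(f) of the preceding proposition), so all three suprema are finite and attained. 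The lower endpoint $6$ in each case comes from the theorem asserting $\chi_3([a,b,c])\ge 6$ off the three small exceptions; for instance $\chi_3([4,1,1])\ge 6$ already forces all three maxima to be at least $6$. The upper endpoints $12$, $24$, $48$ are exactly parts~(c), (d), (f) of the preceding proposition, so the three maxima lie in the stated intervals.

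For the table I would fill each cell by selecting, for the upper bound, the smallest of the relevant general bounds (parts (a), (c), (d) of the preceding proposition), the all-odd bound of Corollary~\ref{allodd} (which gives $8$ whenever $a,b,c$ are all odd), and the degeneracy bound $\nv_3+1$ read from the tables; and, for the lower bound, the largest estimate obtainable from $\chi_3\ge\chi_2\ge\chi_1$ together with the explicit configurations and the monotonicity Lemma~\ref{increases}. Representative cases: $\chi_3([1,1,1])=2$ because a unit cube has no nontrivial orientations, whence $\myC_3([1,1,1])=\myC_1([1,1,1])$ and the value equals $\chi_1([1,1,1])=2$; $\chi_3([2,1,1])\in\{5,6\}$ from part~(a) above and $\chi_3\ge\chi_2([2,1,1])=5$; $\chi_3([3,1,1])\in\{5,\dots,8\}$ from Corollary~\ref{allodd} and $\chi_3\ge\chi_2([3,1,1])\ge 5$; and $\chi_3([5,1,1])\in\{6,7,8\}$ from Corollary~\ref{allodd} and $\chi_3\ge\chi_2([5,1,1])\ge 6$. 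The $[a,2,1]$ and $[a,3,1]$ rows are handled the same way, with the shaded entries of the $\nv_3$ table (e.g.\ $\nv_3([2,2,1])=11$, $\nv_3([3,2,1])=16$, $\nv_3([4,2,1])=21$) supplying the upper bounds $12$, $17$, $22$, while for $[5,2,1]$ and $[4,3,1]$ the general bound $24$ of part~(d) is at least as sharp as the degeneracy estimate, and in the $[a,3,1]$ row the all-odd cells $[3,3,1]$ and $[5,3,1]$ cap at $8$ via Corollary~\ref{allodd}. For the lower bounds in these rows I would invoke the permutation identity $\chi_3([a,b,1])=\chi_3([a,1,b])$ and apply Proposition~\ref{nstdlowerboundsii} to the flattened dimension, exactly as in the proof that $\chi_3([a,b,c])\ge 6$ on the exceptional list.

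The argument involves no real obstacle beyond the bookkeeping of choosing, cell by cell, which of the competing upper bounds is smallest; the only points demanding care are (i) recognizing when the degeneracy bound $\nv_3+1$ beats the global bounds of parts (c)--(f) and when it does not, which is precisely what the shading in the $\nv_3$ tables records, and (ii) routing each lower bound through the correct chain $\chi_3\ge\chi_2\ge\chi_1$ and, for triples with a flattened last coordinate, through the permutation trick so that Proposition~\ref{nstdlowerboundsii} becomes applicable. No step is computationally deep once the preceding propositions, Corollary~\ref{allodd}, and the $\nv_3$ tables are granted.
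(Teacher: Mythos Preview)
Your proposal is correct and matches the paper's implicit argument: the corollary is a tabulation obtained by selecting, cell by cell, the sharpest available lower bound (via $\chi_3\ge\chi_2\ge\chi_1$, the permutation identity, Proposition~\ref{nstdlowerboundsii}, and the explicit $\chi_3$ configurations) and upper bound (via parts (a)--(f) of the preceding proposition, Corollary~\ref{allodd} in the all-odd cells, and the degeneracy bound $\nv_3+1$ where it beats the general estimates). The only cell worth a remark is $\chi_3([4,1,1])$, where the lower bound $6$ comes specifically from the 56-cuboid example in the theorem of this section rather than through $\chi_2$, but this is covered by your reference to ``the explicit configurations''.
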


We certainly do not expect sequences such as $\chi_3([a,3,1])$ to alternate between large and small numbers as $a$ grows, but our results reflect that we have much better estimates when Corollary \ref{allodd} is applicable than when it is not. We see, however, no way to infer bounds on the general case from this.

\section{Closing remarks}

The most striking fact about the previous section is probably  that we did not present any examples where $\chi_3([a,b,c])>6$. This is not for want of trying; indeed, we have searched extensively for such examples, but none have been found. More generally, we note that of all the numbers $\chi_\bullet([a,b,c])$ we have studied here, we only know for a fact that the numbers
\begin{gather}\label{lessthansix}
 \chi_1([a,1,1]),\chi_1([2,2,1]),\chi_2([2,1,1])
\end{gather}
are not equal to 6, since we know they are 5 or less. For the additional numbers
\begin{gather}
 \chi_1([3,2,1]), \chi_1([3,3,1]), \chi_1([4,2,1])\notag\\
 \chi_2([3,1,1]),\chi_2([3,2,1]),\chi_2([4,1,1])\label{canbelessthansix}\\
 \chi_3([2,1,1]),\chi_3([3,1,1])\notag
\end{gather}
6 and 5 are both options, and in all cases not mentioned in \eqref{lessthansix} or \eqref{canbelessthansix} we know that the chromatic number is at least 6, but could be significantly larger.
We single out four instances for which deciding if the chromatic number is or isn't 6 seems particularly important.

\begin{quest}
Is    $\chi_3([2,1,1])=6$? Or is $\chi_3([2,1,1])=5$?
\end{quest}

We can prove that there is no periodic 5-coloring in this case, and  have searched extensively for configurations needing 6 colors without results. 

\begin{quest}
Is   $\chi_1([4,2,1])=6$? Or is $\chi_1([4,2,1])=5$?
\end{quest}

This open question appears to be the most likely to reach a chromatic number of 6.

\begin{quest}
Is  $\chi_1([2,2,2])=6$? Or is $\chi_1([2,2,2])\in\{7,8\}$?
\end{quest}

It would be very interesting even to rule out the option $\chi_1=8$. 

\begin{quest}
Is $\chi_3([5,2,1])=6$? Or is $\chi_3([5,2,1])\in\{7,\dots,24\}$?
\end{quest}

Again, improved upper bounds would be useful.
The cuboid dimensions suggested here appear to us to be promising, since we believe they are large enough to support what we expect is general behavior, whilst being small enough to allow computer-based experimentation. 
It is possible that such an experiment requires computational power beyond what we have on our standard laptops.

Needless to say, it is also interesting to study these types of questions at other dimensions $d\not=3$. It is straightforward to show, using the four color theorem and methods similar to the ones used above, that $\chi_1([a,b])=4$ except that $\chi_1([2,1])=3$ and $\chi_1([1,1])=2$, and that $\chi_2([a,b])=4$ except that $\chi_2([1,1])=2$. We intend to study higher dimensions elsewhere, but note that our methods  at least show that 
\[
d+1\leq \max_{x_1,\dots,x_d}\chi_1([x_1,\dots,x_d])\leq \max_{x_1,\dots,x_d}\chi_d([x_1,\dots,x_d])\leq d!2^d
\]
for all $d$.

%
%

\appendix

\section{Coordinate data}\label{coordinates}

The configurations can be displayed (and then rotated) in Maple by calling \verb!draw! on the lists after running
\begin{verbatim}
with(plots):
with(plottools):
colchoice:=[yellow,red,blue,white,black,green];
draw:=L->display(map(x->display(cuboid([x[1],x[3],x[5]],
        [x[2],x[4],x[6]]),color=colchoice[x[7]]),L),
        scaling=constrained);
\end{verbatim}
To obtain exploded versions of layered configurations, one can use
\begin{verbatim}
drawexplodeZ:=L->display(map(x->display(cuboid([x[1],x[3],5*x[5]],
                          [x[2],x[4],4*x[5]+x[6]]),
                          color=colchoice[x[7]]),L),
                          scaling=constrained);
\end{verbatim}
\subsection{\tt821/2/6}\label{821}
{\tiny \begin{verbatim}
[[-1,1,-8,0,0,1,1],[-1,1,0,8,0,1,3],[1,3,-4,4,0,1,4],[3,5,-4,4,0,1,3],[-3,-1,-4,4,0,1,4],[-5,-3,-4,4,0,1,3],
[-8,0,-1,1,1,2,2],[0,8,-1,1,1,2,5],[-4,4,1,3,1,2,6],[-4,4,3,5,1,2,5],[-4,4,-3,-1,1,2,6],[-4,4,-5,-3,1,2,5]]
\end{verbatim}}

\subsection{\tt311/1/4}\label{311-1}
{\tiny\begin{verbatim}
[[9,10,12,15,11,12,1],[13,14,13,16,11,12,4],[10,13,13,14,11,12,2],[10,11,14,17,11,12,3],[9,12,14,15,10,11,2],
[11,14,13,14,10,11,3],[8,11,13,14,10,11,4],[11,14,15,16,10,11,1],[12,15,14,15,10,11,5],[11,12,14,17,11,12,4],
[12,13,14,17,11,12,3]]
\end{verbatim}}

\subsection{\tt 221/1/5}\label{221}
{\tiny \begin{verbatim}
[[3,5,3,5,6,7,2],[6,8,4,6,5,6,2],[4,6,3,5,7,8,1],[5,7,4,6,6,7,4],[4,6,4,6,5,6,1],[6,8,3,5,7,8,2],[5,7,2,4,6,7,3],
[7,9,3,5,6,7,5],[4,6,2,4,5,6,4],[6,8,2,4,5,6,1],[5,7,3,5,4,5,5]]
\end{verbatim}}

\subsection{\tt 521/1/6}\label{521}
{\tiny \begin{verbatim}[[16,17,17,19,28,33,5],[15,16,19,21,25,30,5],[16,17,19,21,26,31,1],[15,16,18,20,30,35,3],[16,17,19,21,31,36,2],
[15,16,20,22,30,35,6],[15,16,17,19,25,30,1],[16,17,21,23,29,34,5],[15,16,19,21,35,40,4],[14,15,18,20,29,34,2],
[14,15,19,21,34,39,1],[14,15,20,22,29,34,4],[15,16,21,23,25,30,3],[17,18,18,20,27,32,4],[17,18,20,22,27,32,6],[16,17,17,19,23,28,2],
[16,17,21,23,24,29,4],[16,17,19,21,21,26,3],[17,18,18,20,22,27,6],[17,18,20,22,22,27,2],[17,18,16,18,25,30,3],[18,19,19,21,23,28,3],
[17,18,22,24,26,31,3],[15,16,20,22,20,25,2],[15,16,18,20,20,25,4],[14,15,19,21,24,29,3],[18,19,17,19,24,29,2],[16,17,21,23,19,24,5],
[16,17,19,21,16,21,1],[16,17,17,19,18,23,5],[17,18,16,18,20,25,1],[17,18,18,20,17,22,2],[17,18,20,22,17,22,6],[18,19,17,19,19,24,4],
[18,19,19,21,18,23,1],[17,18,22,24,21,26,1],[18,19,21,23,24,29,4],[18,19,21,23,19,24,5],[18,19,19,21,28,33,5],[16,17,23,25,25,30,6],
[16,17,15,17,24,29,4],[15,16,16,18,20,25,3],[16,17,15,17,19,24,6],[17,18,16,18,15,20,3],[14,15,17,19,24,29,6],[18,19,17,19,14,19,6],
[18,19,15,17,16,21,2],[18,19,15,17,21,26,6],[17,18,14,16,17,22,5],[17,18,14,16,22,27,2],[18,19,19,21,13,18,5],[19,20,20,22,22,27,6],
[18,19,21,23,14,19,4],[19,20,18,20,25,30,4],[18,19,17,19,29,34,1],[18,19,15,17,26,31,4],[18,19,21,23,29,34,1],[17,18,22,24,16,21,3],
[19,20,16,18,27,32,6],[17,18,20,22,12,17,2],[17,18,18,20,12,17,4],[17,18,18,20,32,37,6],[17,18,16,18,30,35,2],[17,18,20,22,32,37,3],
[18,19,19,21,33,38,2],[18,19,17,19,34,39,4],[16,17,23,25,20,25,2],[17,18,22,24,31,36,2],[17,18,14,16,27,32,6],[18,19,15,17,31,36,3],
[19,20,16,18,32,37,2],[16,17,21,23,14,19,4],[16,17,17,19,33,38,4],[16,17,15,17,29,34,1],[15,16,16,18,30,35,2],[16,17,21,23,34,39,4],
[16,17,19,21,36,41,1],[16,17,17,19,13,18,6],[15,16,15,17,25,30,5],[17,18,16,18,35,40,1],[17,18,18,20,37,42,3],[17,18,14,16,32,37,4],
[16,17,15,17,34,39,3],[16,17,15,17,14,19,2],[18,19,21,23,34,39,4],[17,18,20,22,37,42,6],[17,18,16,18,10,15,1],[16,17,23,25,30,35,1],
[17,18,14,16,12,17,6],[18,19,15,17,11,16,4],[18,19,17,19,9,14,2],[17,18,22,24,36,41,1],[16,17,19,21,11,16,3],[15,16,20,22,15,20,6],
[15,16,18,20,15,20,2],[15,16,16,18,15,20,1],[18,19,19,21,38,43,1],[18,19,17,19,39,44,2],[18,19,19,21,8,13,3],[14,15,19,21,19,24,1],
[14,15,16,18,29,34,3],[16,17,17,19,38,43,2],[18,19,21,23,39,44,3],[14,15,17,19,34,39,4],[15,16,17,19,35,40,1],[17,18,16,18,40,45,4],
[17,18,18,20,42,47,5],[17,18,14,16,37,42,2],[16,17,19,21,41,46,4],[16,17,15,17,39,44,6],[15,16,18,20,40,45,3],[16,17,17,19,43,48,1],
[17,18,20,22,42,47,2],[18,19,19,21,43,48,6],[16,17,21,23,39,44,3],[18,19,21,23,9,14,6],[17,18,22,24,11,16,1],[16,17,23,25,35,40,6],
[14,15,18,20,39,44,2],[15,16,20,22,40,45,2],[15,16,21,23,35,40,1],[17,18,18,20,7,12,6],[17,18,20,22,7,12,4],[18,19,15,17,41,46,3],
[18,19,17,19,44,49,1],[16,17,21,23,44,49,6],[15,16,22,24,40,45,5],[14,15,20,22,39,44,3],[16,17,15,17,9,14,3],[17,18,16,18,5,10,5],
[15,16,16,18,40,45,4],[17,18,16,18,45,50,6],[16,17,15,17,44,49,3],[17,18,14,16,42,47,1],[17,18,18,20,47,52,2],[16,17,19,21,46,51,3],
[17,18,20,22,47,52,5],[15,16,15,17,35,40,5],[16,17,17,19,48,53,5],[17,18,14,16,7,12,2],[14,15,16,18,39,44,3],[15,16,18,20,45,50,2],
[14,15,17,19,44,49,5],[15,16,16,18,45,50,6],[14,15,19,21,44,49,1],[15,16,20,22,45,50,5],[14,15,15,17,34,39,1],[15,16,14,16,30,35,4]
,[16,17,13,15,31,36,2],[16,17,13,15,26,31,3],[17,18,12,14,30,35,5],[18,19,13,15,28,33,2],[18,19,13,15,33,38,1],[19,20,14,16,30,35,5],
[19,20,14,16,25,30,1],[18,19,13,15,38,43,5],[18,19,13,15,23,28,5],[18,19,13,15,18,23,4],[16,17,13,15,21,26,1],[18,19,13,15,13,18,1],
[17,18,12,14,35,40,6],[17,18,12,14,20,25,6],[16,17,13,15,16,21,4],[15,16,14,16,18,23,5],[15,16,14,16,13,18,6],[14,15,15,17,17,22,4],
[17,18,12,14,15,20,3],[16,17,13,15,11,16,5],[16,17,13,15,36,41,1],[14,15,15,17,22,27,2],[18,19,13,15,8,13,3],[17,18,12,14,10,15,4],
[18,19,11,13,32,37,3],[18,19,11,13,27,32,1],[19,20,12,14,29,34,6],[14,15,21,23,44,49,4],[16,17,15,17,49,54,2],[17,18,12,14,40,45,3],
[16,17,13,15,41,46,5],[15,16,14,16,40,45,2],[15,16,14,16,45,50,1],[15,16,13,15,35,40,3],[15,16,12,14,30,35,1],[15,16,13,15,25,30,6],
[15,16,12,14,20,25,2],[16,17,11,13,12,17,2],[15,16,12,14,15,20,1],[14,15,13,15,19,24,6],[14,15,13,15,14,19,3],[14,15,14,16,29,34,2]];
\end{verbatim}}

\subsection{\tt431/1/6}\label{431}
{\tiny\begin{verbatim}
[[10,11,12,15,13,17,6],[11,12,13,16,11,15,3],[10,11,12,15,9,13,1],[11,12,10,13,11,15,5],[11,12,11,14,15,19,4],[10,11,9,12,12,16,2],
[12,13,12,15,12,16,1],[11,12,14,17,15,19,2],[10,11,15,18,12,16,4],[12,13,9,12,14,18,2],[11,12,8,11,15,19,3],[12,13,12,15,16,20,3],
[12,13,9,12,18,22,1],[13,14,10,13,15,19,6],[12,13,15,18,14,18,6],[11,12,16,19,11,15,1],[10,11,15,18,8,12,2],[9,10,13,16,10,14,5],
[13,14,13,16,13,17,4],[12,13,15,18,10,14,5],[11,12,14,17,7,11,4],[12,13,12,15,8,12,2],[13,14,10,13,11,15,5],[13,14,13,16,9,13,3],
[12,13,9,12,10,14,4],[11,12,11,14,7,11,6],[11,12,7,10,11,15,1],[10,11,9,12,8,12,3],[9,10,10,13,11,15,4],[10,11,12,15,5,9,5],
[10,11,6,9,10,14,6],[9,10,7,10,13,17,3],[10,11,9,12,16,20,1],[10,11,6,9,14,18,4],[11,12,10,13,19,23,2],[10,11,12,15,17,21,3],
[9,10,10,13,15,19,5],[9,10,13,16,14,18,2],[10,11,15,18,16,20,1],[10,11,6,9,18,22,6],[11,12,7,10,19,23,5],[11,12,5,8,15,19,2],
[12,13,6,9,13,17,5],[12,13,6,9,17,21,4]]
\end{verbatim}}

\subsection{\tt222/1/6}\label{222}
{\tiny\begin{verbatim}
[[14,16,11,13,11,13,1],[11,13,10,12,9,11,2],[12,14,10,12,11,13,6],[13,15,10,12,9,11,3],[12,14,12,14,10,12,4],[14,16,9,11,11,13,4],
[12,14,8,10,10,12,1],[14,16,7,9,11,13,3],[14,16,8,10,9,11,6],[12,14,8,10,8,10,4],[13,15,6,8,9,11,2],[15,17,10,12,9,11,2],
[16,18,8,10,10,12,1],[14,16,12,14,9,11,6],[16,18,10,12,11,13,6],[11,13,6,8,9,11,3],[10,12,8,10,9,11,6],[16,18,12,14,10,12,4],
[14,16,13,15,11,13,5],[12,14,12,14,8,10,1],[13,15,14,16,9,11,3],[14,16,9,11,7,9,1],[14,16,11,13,7,9,5],[12,14,10,12,7,9,6],
[10,12,9,11,7,9,1],[10,12,7,9,7,9,2],[12,14,8,10,6,8,3],[14,16,7,9,7,9,5],[12,14,6,8,7,9,1],[15,17,6,8,9,11,4],[16,18,8,10,8,10,3],
[16,18,10,12,7,9,6],[16,18,12,14,8,10,1],[15,17,14,16,9,11,2],[14,16,13,15,7,9,4],[12,14,12,14,6,8,3],[10,12,11,13,7,9,5],
[11,13,10,12,5,7,4],[13,15,10,12,5,7,2],[10,12,12,14,9,11,3],[9,11,10,12,9,11,4],[14,16,8,10,5,7,6],[16,18,8,10,6,8,2],
[16,18,12,14,6,8,2],[14,16,12,14,5,7,1],[10,12,8,10,5,7,5],[14,16,15,17,7,9,6],[12,14,14,16,7,9,5],[13,15,14,16,5,7,2],
[11,13,14,16,9,11,6],[10,12,13,15,7,9,4],[11,13,14,16,5,7,1],[8,10,8,10,8,10,5],[8,10,8,10,6,8,4],[9,11,10,12,5,7,2],
[8,10,10,12,7,9,3],[8,10,12,14,8,10,2],[12,14,16,18,8,10,1],[8,10,8,10,4,6,3],[12,14,16,18,6,8,3],[10,12,15,17,7,9,2],
[8,10,12,14,6,8,1],[7,9,10,12,5,7,6],[9,11,14,16,9,11,1],[8,10,14,16,7,9,3],[8,10,10,12,3,5,1],[9,11,14,16,5,7,6],
[9,11,12,14,4,6,5],[10,12,16,18,5,7,5],[10,12,16,18,9,11,5],[10,12,17,19,7,9,6],[8,10,16,18,8,10,4],[8,10,16,18,6,8,1],
[7,9,14,16,5,7,5],[7,9,12,14,4,6,2]];
\end{verbatim}}

\subsection{\tt611/2/6}\label{611}
{\tiny\begin{verbatim}
[[11,17,7,8,0,1,3],[12,18,8,9,0,1,5],[11,12,3,9,1,2,5],[12,13,1,7,1,2,3],[12,13,7,13,1,2,1],[13,14,1,7,1,2,5],[13,14,7,13,1,2,4],
[14,15,4,10,1,2,1],[15,16,1,7,1,2,3],[15,16,7,13,1,2,4],[16,17,5,11,1,2,1],[8,14,7,8,2,3,3],[9,15,6,7,2,3,4],[9,15,8,9,2,3,2],
[10,16,10,11,2,3,3],[11,17,4,5,2,3,4],[11,17,5,6,2,3,2],[12,18,9,10,2,3,5],[14,20,7,8,2,3,5],[15,21,6,7,2,3,6],[15,21,8,9,2,3,6],
[12,13,5,11,3,4,1],[13,14,5,11,3,4,6],[15,16,3,9,3,4,3],[16,17,4,10,3,4,1]];
\end{verbatim}}

\subsection{\tt511/2/6}\label{511}
{\tiny\begin{verbatim}
[[8,9,17,22,12,13,3],[11,12,17,22,14,15,4],[10,11,18,23,12,13,2],[7,12,21,22,13,14,6],[7,12,20,21,13,14,5],[9,10,20,25,12,13,1],
[7,12,19,20,13,14,6],[8,13,18,19,13,14,5],[9,10,15,20,12,13,4],[11,12,18,23,12,13,4],[10,11,18,23,14,15,2],[9,14,22,23,13,14,5],
[9,10,18,23,14,15,1],[8,9,18,23,14,15,4],[8,13,17,18,13,14,1],[7,8,18,23,14,15,1],[7,12,21,22,15,16,5],[6,11,22,23,15,16,3],
[7,8,14,19,13,14,4],[7,8,18,23,12,13,1],[4,9,22,23,13,14,2],[7,12,19,20,11,12,6],[7,12,18,19,11,12,5],[7,12,21,22,11,12,6],
[6,11,23,24,14,15,3],[7,12,22,23,11,12,5],[8,9,22,27,12,13,4],[12,13,18,23,11,12,4],[8,13,23,24,13,14,6],[7,12,23,24,10,11,6],
[12,13,15,20,12,13,3],[5,10,23,24,11,12,2],[11,12,22,27,14,15,1],[11,16,22,23,15,16,4],[12,13,20,25,14,15,2],[10,15,23,24,15,16,6],
[8,13,24,25,13,14,3],[12,13,20,25,12,13,1],[8,13,16,17,13,14,2],[10,11,13,18,12,13,3],[11,12,13,18,12,13,6],[10,15,23,24,11,12,3],
[6,11,24,25,11,12,6],[10,11,23,28,12,13,4],[11,12,23,28,12,13,5],[13,14,19,24,14,15,1],[8,13,25,26,13,14,6],[13,14,15,20,13,14,4],
[13,14,23,28,13,14,4],[13,14,19,24,12,13,6],[8,13,26,27,13,14,3],[12,17,21,22,13,14,4],[12,17,20,21,13,14,3],[7,8,23,28,13,14,1],
[7,8,23,28,12,13,3],[10,11,24,29,14,15,2],[8,13,27,28,13,14,6],[9,10,25,30,12,13,2],[7,12,25,26,11,12,1],[9,10,24,29,14,15,1],
[9,10,23,28,15,16,2],[7,12,26,27,11,12,6],[6,11,28,29,13,14,3],[11,16,24,25,11,12,2],[12,13,25,30,14,15,4],[8,13,28,29,15,16,6],
[13,14,24,29,14,15,6],[10,15,27,28,15,16,1],[11,12,27,32,14,15,3],[11,16,28,29,13,14,1],[12,13,25,30,12,13,4],[13,14,24,29,12,13,5],
[9,14,27,28,11,12,1],[14,15,20,25,12,13,1],[14,15,22,27,13,14,2],[10,11,24,29,10,11,5],[11,12,24,29,10,11,4],[12,13,21,26,10,11,1],
[9,10,24,29,10,11,3],[9,14,28,29,11,12,6],[10,11,28,33,12,13,1],[11,12,28,33,12,13,3],[8,13,29,30,11,12,5],[14,19,27,28,13,14,3],
[14,15,25,30,12,13,6],[12,17,25,26,11,12,3],[12,17,26,27,11,12,2],[15,16,24,29,12,13,5],[13,14,23,28,10,11,4],[12,13,26,31,10,11,3],
[15,16,19,24,12,13,6],[14,15,22,27,10,11,1],[13,18,22,23,11,12,2],[13,18,21,22,11,12,5],[16,17,21,26,12,13,1],[10,15,24,25,9,10,3],
[9,14,25,26,9,10,2],[15,16,22,27,10,11,4]]
\end{verbatim}}

\subsection{\tt411/2/5}\label{411-2}
{\tiny\begin{verbatim}
[[4,5,13,17,5,6,5],[1,5,11,12,6,7,1],[4,5,9,13,5,6,3],[3,4,11,15,5,6,2],[2,6,14,15,6,7,4],[2,3,11,15,5,6,3],[2,6,13,14,6,7,1],
[2,6,12,13,6,7,4]]
\end{verbatim}}

\subsection{\tt311/2/5}\label{311-2}
{\tiny\begin{verbatim}
[[9,10,12,15,11,12,1],[13,14,13,16,11,12,4],[10,13,13,14,11,12,2],[10,11,14,17,11,12,3],[9,12,14,15,10,11,2],[11,14,13,14,10,11,3],
[8,11,13,14,10,11,4],[11,14,15,16,10,11,1],[12,15,14,15,10,11,5],[11,12,14,17,11,12,4],[12,13,14,17,11,12,3]];
\end{verbatim}}

\subsection{\tt211/2/5}\label{211}
{\tiny\begin{verbatim}
[[2,3,5,7,2,3,4],[2,3,4,6,1,2,1],[4,6,3,4,1,2,1],[3,5,5,6,1,2,4],[2,4,3,4,3,4,4],[3,5,4,5,1,2,3],[5,6,3,5,2,3,3],[4,6,7,8,2,3,2],
[4,6,3,4,3,4,1],[5,6,4,6,3,4,2],[3,5,7,8,3,4,4],[2,3,6,8,3,4,5],[3,4,2,4,2,3,1],[2,4,6,7,1,2,2],[2,4,7,8,2,3,3],[4,5,4,6,2,3,2],
[2,4,3,4,1,2,4],[5,6,5,7,2,3,4],[4,5,2,4,2,3,4],[3,5,4,5,3,4,3],[3,4,4,6,2,3,5],[2,3,4,6,3,4,1],[3,5,5,6,4,5,4],[2,3,5,7,4,5,3],
[5,6,6,8,3,4,1],[3,5,6,7,4,5,1],[3,4,5,7,3,4,2],[3,5,2,3,3,4,3],[4,6,6,7,1,2,3],[5,6,4,6,1,2,2],[3,5,6,7,2,3,1],[2,3,3,5,2,3,2],
[5,6,5,7,4,5,3],[4,6,4,5,4,5,1],[2,4,4,5,4,5,2],[4,5,5,7,3,4,5]];
\end{verbatim}}

\subsection{\tt421/2/6}\label{421}
{\tiny\begin{verbatim}
[[1,3,3,7,0,1,6],[3,5,3,7,0,1,4],[0,4,1,3,0,1,3],[5,7,3,7,0,1,3],[4,8,1,3,0,1,2],[2,6,2,4,1,2,5],[4,8,6,8,1,2,2],[2,6,4,6,1,2,1],
[6,8,2,6,1,2,4],[0,4,6,8,1,2,3],[0,2,2,6,1,2,2],[5,7,3,7,2,3,3],[1,5,5,7,2,3,5],[1,5,3,5,2,3,4],[1,5,0,2,1,2,6],[4,8,1,3,2,3,2],
[0,4,1,3,2,3,1],[2,6,7,9,2,3,6],[6,8,7,11,2,3,4],[7,9,3,7,2,3,1],[4,8,6,8,3,4,2],[4,6,2,6,3,4,1],[6,8,2,6,3,4,4]]
\end{verbatim}}

\subsection{\tt421/2/6 (alternate)}\label{421alt}
{\tiny\begin{verbatim}
[[0,2,0,4,3,4,1],[2,4,0,4,3,4,3],[4,6,0,4,3,4,1],[1,5,1,3,2,3,4],[1,5,-1,1,2,3,2],[1,5,3,5,2,3,5],[-1,1,0,4,2,3,3],
[5,7,0,4,2,3,3],[2,4,-2,2,1,2,5],[2,4,2,6,1,2,2],[0,2,0,4,1,2,1],[4,6,0,4,1,2,1],[1,5,5,3,-2,-1,6],[1,5,3,1,-2,-1,5],
[1,5,1,-1,-2,-1,6],[2,4,4,0,-1,0,1],[0,2,4,0,-1,0,4],[4,6,4,0,-1,0,3],[1,5,6,4,-1,0,5],[1,5,0,-2,-1,0,5],[-1,3,3,1,0,1,3],
[3,7,3,1,0,1,4],[1,5,5,3,0,1,6],[1,5,1,-1,0,1,6]]
\end{verbatim}}

\subsection{\tt212/2/6}\label{212}
{\tiny\begin{verbatim}
[[14,15,9,11,10,12,5],[11,12,9,11,8,10,5],[13,14,8,10,7,9,4],[12,14,9,10,9,11,1],[12,13,9,11,7,9,6],[13,15,10,11,8,10,2],
[11,13,8,9,8,10,3],[14,16,9,10,8,10,6],[13,15,8,9,9,11,2],[12,13,10,12,9,11,4],[13,14,10,12,10,12,6],[13,14,8,10,11,13,4],
[12,13,9,11,11,13,3],[12,13,7,9,10,12,5],[11,12,8,10,10,12,2],[10,12,10,11,10,12,1],[10,11,8,10,9,11,6],[13,15,10,11,12,14,2],
[10,12,7,8,9,11,4],[12,14,7,8,8,10,6],[14,15,7,9,7,9,3],[13,14,6,8,10,12,3],[12,13,7,9,6,8,5],[11,13,11,12,11,13,5],
[10,12,11,12,9,11,3],[9,11,10,11,8,10,2],[14,16,7,8,9,11,4],[14,15,7,9,11,13,6],[15,16,8,10,10,12,1],[15,17,8,9,8,10,5],
[14,16,9,10,12,14,3],[13,15,6,7,8,10,5],[13,14,6,8,6,8,1],[11,13,6,7,9,11,1],[12,13,5,7,7,9,3],[11,12,6,8,7,9,2],
[11,12,8,10,6,8,1],[10,11,7,9,7,9,5],[9,11,9,10,7,9,3],[11,13,11,12,7,9,2],[13,15,8,9,5,7,2],[14,15,9,11,6,8,5],[13,14,10,12,6,8,3],
[13,14,11,13,8,10,1],[15,16,8,10,6,8,4],[14,15,11,13,7,9,4],[14,16,11,12,9,11,3],[15,16,10,12,7,9,1],[15,17,10,11,9,11,4],
[15,16,10,12,11,13,6],[11,13,12,13,8,10,5],[12,14,12,13,10,12,3],[14,15,11,13,11,13,1],[14,16,12,13,9,11,2],[13,14,11,13,12,14,4],
[16,17,9,11,7,9,2],[16,17,11,13,8,10,6],[16,17,11,13,10,12,1],[16,17,9,11,11,13,2],[15,17,10,11,13,15,4],[14,16,11,12,13,15,3]];
\end{verbatim}}

\subsection{\tt312/2/6}\label{312}
{\tiny\begin{verbatim}
[[4,7,3,4,2,4,6],[4,5,3,6,0,2,2],[3,4,4,7,0,2,5],[1,4,3,4,1,3,1],[2,5,5,6,2,4,1],[3,6,4,5,2,4,3],[5,6,5,8,3,5,4],[0,3,4,5,1,3,6],
[6,7,4,7,3,5,5],[6,9,4,5,5,7,4],[5,6,2,5,4,6,1],[5,8,5,6,5,7,2],[4,5,3,6,4,6,5],[4,7,6,7,5,7,6],[3,4,1,4,3,5,4],[3,4,4,7,4,6,2],
[2,3,2,5,3,5,5],[3,6,4,5,6,8,3],[1,4,3,4,5,7,1],[4,7,3,4,6,8,2],[2,5,2,3,5,7,3],[4,5,0,3,3,5,2],[3,6,1,2,5,7,6],[2,5,5,6,6,8,4],
[3,4,1,4,7,9,4]]
\end{verbatim}}

\subsection{\tt412/2/6}\label{412}
{\tiny\begin{verbatim}
[[3,7,6,7,1,3,1],[2,3,0,4,1,3,5],[2,3,4,8,1,3,6],[3,4,2,6,0,2,4],[4,5,2,6,0,2,6],[5,6,2,6,0,2,4],[6,7,2,6,0,2,5],[3,7,1,2,1,3,1],
[3,7,5,6,2,4,2],[3,7,4,5,2,4,3],[3,7,2,3,2,4,3],[3,7,3,4,2,4,2],[2,3,2,6,3,5,1],[0,4,6,7,3,5,4],[3,4,2,6,4,6,5],[0,4,1,2,3,5,4],
[4,5,2,6,4,6,4],[5,6,2,6,4,6,1],[6,7,2,6,4,6,4],[4,8,1,2,3,5,2],[7,8,2,6,3,5,1],[7,8,1,5,1,3,4],[7,8,5,9,1,3,6],[3,7,4,5,6,8,3],
[3,7,3,4,6,8,2],[7,8,1,5,5,7,6]]
\end{verbatim}}

\subsection{\tt411/3/6}\label{411-3}
{\tiny\begin{verbatim}
[[14,15,28,29,5,9,5],[13,14,25,29,13,14,3],[14,15,28,29,12,16,2],[13,14,28,29,14,18,4],[11,15,27,28,14,15,1],[12,13,28,29,11,15,2],
[12,13,27,28,10,14,6],[13,14,27,31,12,13,4],[11,15,29,30,13,14,1],[11,12,25,29,13,14,3],[10,14,26,27,12,13,5],
[11,12,27,31,12,13,4],[12,13,26,27,13,17,4],[10,11,27,28,11,15,6],[11,12,26,30,11,12,3],[12,13,29,30,9,13,5],[13,14,26,30,11,12,3],
[10,14,30,31,11,12,6],[11,15,28,29,10,11,4],[14,15,27,28,10,14,5],[10,11,28,29,10,14,1],[11,15,29,30,14,15,3],[8,12,28,29,14,15,4],
[10,11,29,30,11,15,5],[14,15,28,32,11,12,1],[8,12,29,30,10,11,6],[8,12,27,28,10,11,2],[11,15,26,27,10,11,4],[13,14,27,28,7,11,2],
[14,15,26,30,9,10,1],[12,13,25,26,10,14,2],[12,13,25,29,9,10,1],[11,12,26,30,9,10,3],[11,12,25,26,9,13,6],[10,11,26,27,8,12,1],
[10,11,27,31,9,10,5],[13,17,29,30,10,11,2],[13,14,28,32,9,10,3],[10,14,28,29,8,9,4],[9,13,27,28,8,9,6],[10,14,30,31,10,11,1],
[11,15,26,27,8,9,5],[11,15,29,30,8,9,2],[14,15,30,31,7,11,4],[12,13,26,30,7,8,3],[13,14,28,32,7,8,1],[14,18,29,30,7,8,3],
[15,16,26,30,11,12,6],[15,16,25,29,12,13,3],[14,18,29,30,12,13,5],[15,16,27,31,13,14,6],[15,16,30,31,9,13,3],[15,16,28,32,8,9,1],
[16,17,28,29,10,14,1],[15,16,26,30,9,10,4],[14,18,27,28,8,9,6]]
\end{verbatim}}
\newcommand{\etalchar}[1]{$^{#1}$}
\providecommand{\bysame}{\leavevmode\hbox to3em{\hrulefill}\thinspace}
\providecommand{\MR}{\relax\ifhmode\unskip\space\fi MR }
\providecommand{\MRhref}[2]{%
  \href{http://www.ams.org/mathscinet-getitem?mr=#1}{#2}
}
\providecommand{\href}[2]{#2}

\noindent
(S. Eilers): {\sc Department of Mathematical Sciences, University of Copenhagen, Universitetsparken 5, DK-2100 Copenhagen \O, Denmark}\\
\emph{Email address :} \verb!eilers@math.ku.dk!\\[4mm]

\noindent
(R. Johansen): {\sc V\ae{}rl\o{}se, Denmark}\\
\emph{Email address :} \verb!rune@cyx.dk!\\[4mm]

\noindent
(R.V. Rasmussen):  {\sc Department of Mathematical Sciences, University of Copenhagen, Universitetsparken 5, DK-2100 Copenhagen \O, Denmark}\\
\emph{Current address :} {\sc Copenhagen, Denmark}\\
\emph{Email address :} \verb!rasmusrasmussen1998@gmail.com!\\[4mm]

\noindent
(C. Thomassen):  {\sc 
Department of Applied Mathematics and Computer Science, Richard Petersens Plads, 322, DK-2800 Kgs. Lyngby,
Denmark\\
\emph{Email address :} \verb!ctho@dtu.dk!
\end{document}

\newcommand{\MR}[1]{}
\newcommand{\etalchar}[1]{$^{#1}$}
\providecommand{\bysame}{\leavevmode\hboxto3em{\hrulefill}\thinspace}
\providecommand{\MR}{\relax\ifhmode\unskip\space\fi MR }
\providecommand{\MRhref}[2]{%
  \href{http://www.ams.org/mathscinet-getitem?mr=#1}{#2}
}
\providecommand{\href}[2]{#2}

\end{document}
\section{Smallcuboids}

\subsection{$2\times1\times 1$}
\begin{theor}\mbox{}
\begin{enumerate}[(i)]
\item $\chi_1([2,1,1])=\perco{1}{2}{1}{1}{6}{2}{2}=3$.
\item $\chi_2([2,1,1])=\perco{2}{2}{1}{1}{10}{10}{2}=5$.
\item $5\leq \chi_3([2,1,1])\leq \perco{3}{2}{1}{1}{12}{12}{12}=6$.
\end{enumerate}
\end{theor}

\subsection{$2\times 2\times 1$}

\begin{theor}\mbox{}
\begin{enumerate}[(i)]
\item $\chi_1([2,2,1])=\perco{1}{2}{2}{1}{10}{10}{2}=5$.\addtocounter{enumi}{1}
\item $6\leq \chi_3([2,2,1])\leq 11$.
\end{enumerate}
\end{theor}

$\chi_3([2,2,1])\geq 6$ even though $\chi_2([2,2,1])=\chi_1([2,2,1])=5$. The smallest example has 154 blocks.

\subsection{$2\times 2\times 2$}
\begin{theor}\mbox{}
\begin{enumerate}[(i)]\addtocounter{enumi}{2}
\item $5\leq \chi_1([2,2,2])\leq \perco{1}{2}{2}{2}{4}{4}{4}=8$.
\end{enumerate}
\end{theor}

\section{$\chi_1$}

\subsection{Sticks}

\begin{propo}
\[
\chi_1([a,1,1])=\begin{cases}2&a=1\\3&a=2\\4&a\geq 3\end{cases}
\]
\end{propo}

\subsection{Plates}
\begin{theor}\mbox{}
\begin{enumerate}[(a)]
\item $5\leq \chi_1([a,2,1])\leq \perco{1}{a}{2}{1}{2a}{6}{2}= 12$ for all $a\geq 2$.
\item $5\leq \chi_1([a,3,1])\leq \perco{1}{a}{3}{1}{2a}{21}{2}= 7$ for all $a\geq 3$.
\item $5\leq \chi_1([a,b,1])\leq 8$ for all $a\geq b\geq 4$.
\end{enumerate}
\end{theor}

\subsection{General}
\begin{propo}
\[
5\leq \chi_1([a,b,c])\leq \perco{1}{a}{b}{c}{2a}{2b}{2c}= 8
\]
for any $a\geq b\geq c$ with $b\geq 2$.
\end{propo}

\section{$\chi_2$}

\subsection{Sticks}
\begin{theor}\mbox{}
\begin{enumerate}[(a)]
\item $\chi_2([2,1,1])=5$
\item $5\leq \chi_2([a,1,1])\leq 8$ for all $a\in\{3,4\}$
\item $6\leq \chi_2([a,1,1])\leq 8$ for all $a\geq 5$
\end{enumerate}
\end{theor}

\subsection{Plates}

\subsection{General}

\section{$\chi_3$}

\subsection{Sticks}
\begin{propo}
\[
6\leq \chi_3([a,1,1])\leq 12
\]
for all $a\geq 5$.
\end{propo}

\subsection{Plates}

\begin{propo}
\[
6\leq \chi_3([a,b,1])\leq 24
\]
for all $a\geq b\geq 3$.
\end{propo}

\subsection{General}
\begin{propo}
\[
6\leq \chi_3([a,b,c])\leq 48
\]
for all $a\geq b\geq c\geq 2$.
\end{propo}

\section{Conjectures and open problems}

\appendix
\section{Non existence of inherited colourings}
\subsection{A word on notation}

We have notation 
\begin{eqnarray*}
\myC_1([a,b,c])&=&\{[x,x+a]\times [y,y+b]\times [z,z+c]\mid x,y,z\in \ZZ\}\\
\myC_2([a,b,c])&=&\myC_1([a,b,c])\cup \myC_1([b,a,c])\\
\myC_3([a,b,c])&=&\myC_2([a,b,c])\cup \myC_2([a,c,b])\cup \myC_2([b,c,a])\\
\end{eqnarray*}

A $\myC_i([a,b,c])$ graph is a graph $(V,E)$ with vertex set $V\subseteq \myC_i([a,b,c])$ satisfying that any pair $B,D\in V$ will have $\textup{int} B\cap \textup{int} D= \emptyset$. There is an edge $(B,D)\in E$ if and only if $B\cap D$ is a non degenerate rectangle. A graph $G$ is called $\myC_i([a,b,c])$ representable if there exists a representation of this form.

\begin{defin}
\[
\chi^\bullet([a,b,c])=
\max\{\chi(G)\mid G \text{ has a }\myC_\bullet([a,b,c])\text{ representation}\}
\]
\end{defin}

\begin{defin}
Let $D=[x,x+a]\times[y,y+b]\times [z,z+c]$ be a cuboid. We define the root of $D$ as $\textup{Root}(D)=(x,y,z)$. 
\end{defin}

We will now consider a different class of graphs with vertex set as a subset of $\myC_1([a,b,c])$.
Let $\mathcal{CG}^1(a,b,c,[x,y,z])$ be the graph with vertex set $\{[i,i+a]\times [j,j+b]\times [k,k+c]\mid (i,j,k)\in [x]\times [y]\times [z]\}$. There is an edge between $B,D\in \mathcal{CG}^1(a,b,c,[x,y,z])$ if $B\cap (D+(i_1 x,i_2 y,i_3 z))$ is a non degenerate rectangle for some $(i_1,i_2,i_3)\in \mathbb{Z}^3$. Likewise we define $\mathcal{CG}^2([a,b,c],x,y,z)$ and $\mathcal{CG}^3([a,b,c],x,y,z)$. 

Notice that a coloring of $\mathcal{CG}^i(a,b,c,[x,y,z])$ induces a coloring of the total graph $\overline{\myC_i([a,b,c])}$. Where $\overline{\myC_1([a,b,c])}$ is an infinite graph with vertex set $\myC_1([a,b,c])$, and an edge whenever $B\cap D$ is a non degenerate rectangle. Also notice that the chromatic number of $\overline{\myC_i([a,b,c])}$ gives an upper bound for the chromatic number of any $\myC_i([a,b,c])$ graph. \\

\subsection{Non existence of inherited colourings}
In our methodology we want to use the fact that the chromatic number of $\overline{\myC_i([a,b,c])}$ is larger than that of any $\myC_i([a,b,c])$ graph. We do this by finding colourings of graphs $\mathcal{CG}^i([a,b,c],x,y,z)$ since they induce colourings of $\overline{\myC_i([a,b,c])}$. Notice that the chromatic number of $\overline{\myC_i([a,b,c])}$ is larger than that of any of its subgraphs. Hence if one can show that there exists a subgraph of $\overline{\myC_i([a,b,c])}$ of chromatic number $c$, then the methodology of finding inherited colourings cannot be used to improve the upper bound of $\chi^i([a,b,c])$ to less that $c$. 

\begin{theor}
The full subgraph $G\subseteq \overline{\myC_3([2,1,1])}$ with vertex set $\{D\in \myC_i([a,b,c])\mid \textup{Root}(D)\in [0,2]\times [0,2]\times [0,2]\}$ has chromatic number 6. 
\end{theor}
\begin{proof}
Comes from computer coloring of a graph with $81$ vertices. 
\end{proof}

\end{document}